\DeclareMathAlphabet{\mathpzc}{OT1}{pzc}{m}{it}
\numberwithin{equation}{section}
\def\eqnarray{\stepcounter{equation}\let\@currentlabel=\theequation
\global\@eqnswtrue
\tabskip\@centering\let\\=\@eqncr
$$\halign to \displaywidth\bgroup\hfil\global\@eqcnt\z@
  $\displaystyle\tabskip\z@{##}$&\global\@eqcnt\@ne
  \hfil$\displaystyle{{}##{}}$\hfil
  &\global\@eqcnt\tw@ $\displaystyle{##}$\hfil
  \tabskip\@centering&\llap{##}\tabskip\z@\cr}
\def\endeqnarray{\@@eqncr\egroup
      \global\advance\c@equation\m@ne$$\global\@ignoretrue}
\newtheorem{theorem}{Theorem}[section]
\newtheorem{definition}[theorem]{Definition}
\newtheorem{example}[theorem]{Example}
\newtheorem{lemma}[theorem]{Lemma}
\newtheorem{proposition}[theorem]{Proposition}
\newtheorem{assumption}[theorem]{Assumption}
\newtheorem{remark}[theorem]{Remark}
\numberwithin{equation}{section}
\def\Omc{\mathbb{R}^N\setminus\Omega}
\def\RR{{\mathbb{R}}}
\def\NN{{\mathbb{N}}}
\def\Om{\Omega}
\def\bOm{\overline{\Om}}
\def\pOm{\partial\Omega}
\title{External optimal control of nonlocal PDEs}
\author{Harbir Antil}
\address{Department of Mathematical Sciences, George Mason University, Fairfax, VA 22030, USA.}
\email{hantil@gmu.edu}
\author{Ratna Khatri}
\address{Department of Mathematical Sciences, George Mason University, Fairfax, VA 22030, USA.}
\email{rkhatri3@gmu.edu}
\author{Mahamadi Warma}
\address{University of Puerto Rico  (Rio Piedras Campus), College of Natural Sciences,
Department of Mathematics, PO Box 70377 San Juan PR
00936-8377 (USA). }
\email{mahamadi.warma1@upr.edu, mjwarma@gmail.com}
\thanks{The first and second authors are partially supported by NSF grant DMS-1521590 
and DMS-1818772 and Air Force Office of Scientic Research under Award NO: FA9550-19-1-0036. 
The third author is partially supported by the Air Force Office of Scientific Research under 
Award NO:  FA9550-18-1-0242}
\keywords{Fractional Laplacian, interaction operator, weak and very-weak solutions, Dirichlet control problem, Robin control problem, external control. }
\subjclass[2010]{49J20, 49K20, 35S15, 65R20, 65N30}
\begin{document}

\begin{abstract}
Very recently Warma \cite{warma2018approximate} has shown that for nonlocal PDEs associated with the fractional Laplacian, the classical notion of controllability from the boundary does not make sense and therefore it must be replaced by a control that is localized outside the open set where the PDE is solved.
Having learned from the above mentioned result, in this paper we introduce a new class
of source identification and optimal control problems where the source/control is located outside the observation domain where
the PDE is satisfied. The classical diffusion models lack this flexibility as they assume that the source/control is located either inside
or on the boundary. This is essentially due to the locality property of the underlying 
operators. 
We use the nonlocality of the fractional operator to create a framework
that now allows placing a source/control outside the observation domain. 
We consider the Dirichlet, Robin and Neumann source identification or optimal control problems. 
These problems require dealing with the nonlocal normal derivative (that we shall call interaction 
operator). We create a functional analytic framework and show well-posedness and derive the first order 
optimality conditions for these problems. We introduce a new approach to approximate, with 
convergence rate, the Dirichlet problem with nonzero exterior condition. The numerical examples 
confirm our theoretical findings and illustrate the practicality of our approach.
\end{abstract}

\maketitle

\section{Introduction and Motivation}

In many real life applications a source or a control is placed outside 
(disjoint from) the observation domain $\Om$ where PDE is satisfied. 
Some examples of inverse and optimal control problems where this 
situation can arise are: 
(i) Acoustic testing, when the loudspeakers are placed far from the 
aerospace structures \cite{larkin1999direct}; 
(ii) Magnetotellurics (MT), which is a technique to infer earth's subsurface
electrical conductivity from surface measurements \cite{unsworth2005new, CWeiss_BvBWaanders_HAntil_2018a}; 
(iii) Magnetic drug targeting (MDT), where drugs with ferromagnetic particles 
in suspension are injected into the body and the external magnetic field is 
then used to steer the drug to relevant areas, for example, solid tumors \cite{lubbe1996clinical,HAntil_RHNochetto_PVenegas_2018a, HAntil_RHNochetto_PVenegas_2018b}; 
(iv) Electroencephalography (EEG) is used to record electrical activity in brain \cite{williams1974electroencephalography, niedermeyer2005electroencephalography}, 
in case one accounts for the neurons disjoint from the brain, we will obtain
an external source problem.

This is different from the  traditional approaches where the source/control
is placed either inside the domain $\Om$ or on the boundary 
$\pOm$ of $\Om$.
This is not surprising since
in many cases we do not have 
a direct access to $\pOm$. See for instance, the setup in Figure~\ref{f:twodomains}.
%
 \begin{figure}[!h]
    \centering
    \includegraphics[width=0.3\textwidth]{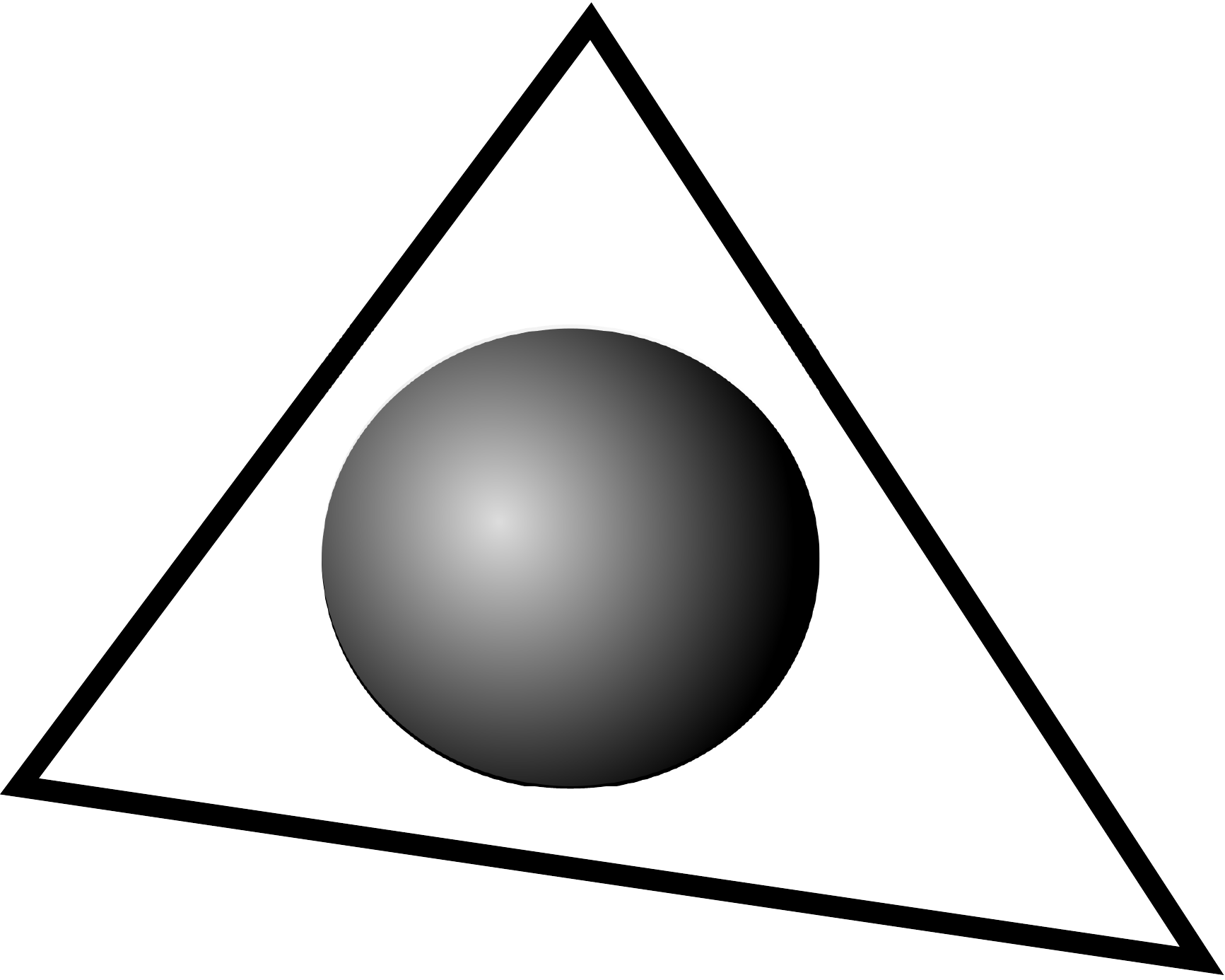}  \qquad 
    \includegraphics[width = 0.28\textwidth]{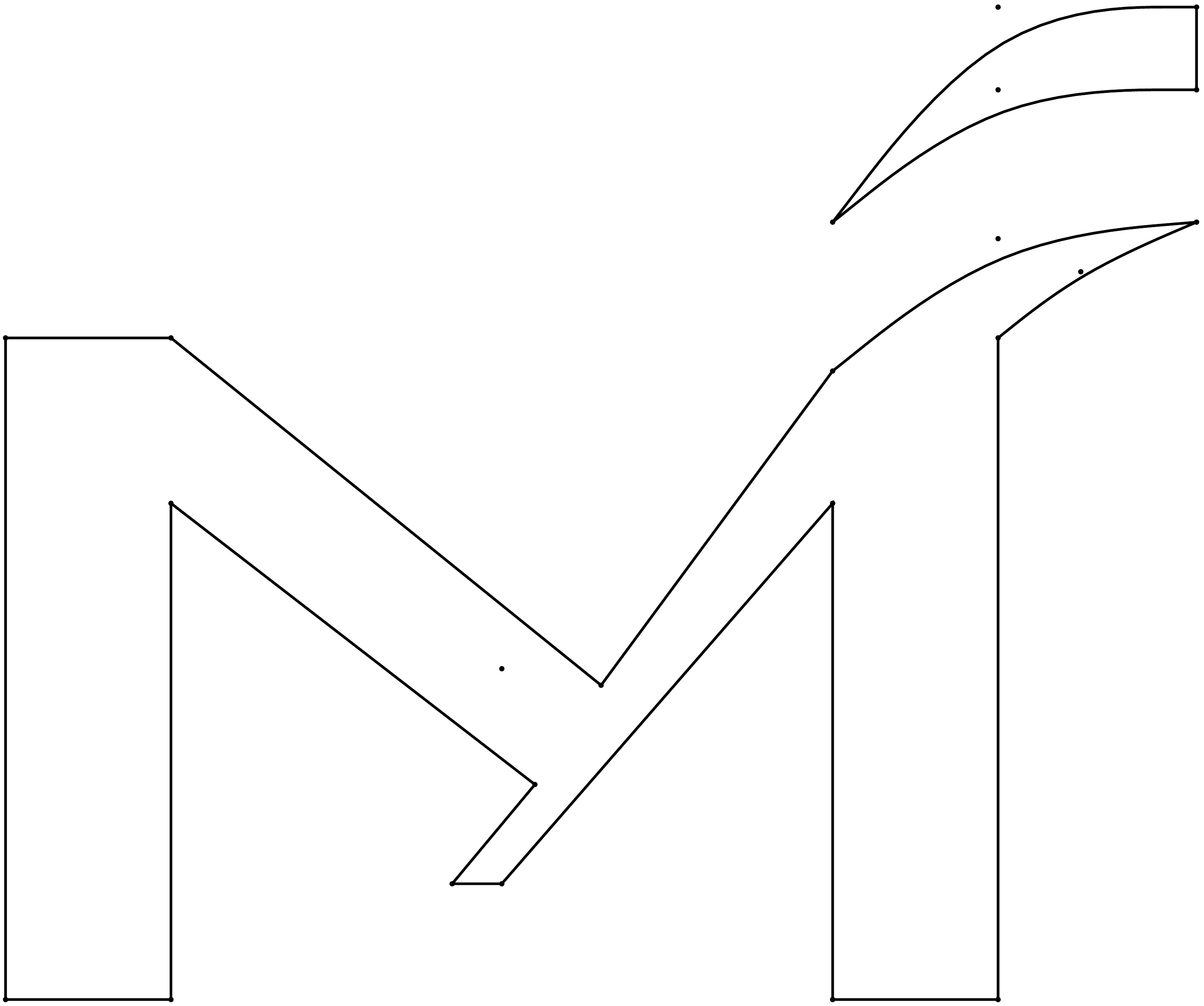}
    \caption{\label{f:twodomains}
    Let a diffussion process occurs inside a domain $\Om$ which is  
    the sphere in the left panel and the letter M in the right panel.     
    We are interested in the source idenfication or controlling 
    this diffusion process by placing the source/control in a set $\widehat\Om$
    which is disjoint from $\Om$. In the above figure $\widehat\Om$ is 
    the triangular pipe in the left panel and the structure on top of the letter 
    M in the right panel.}    
 \end{figure}  
In such applications the existing models can be 
ineffective due to their strict requirements. Indeed think of the 
source identification problem for the most basic Poisson equation:
 \begin{equation}\label{eq:Poisson}
  -\Delta u = f \quad \mbox{in } \Om, \quad u = z \quad \mbox{on } \pOm ,
 \end{equation}
where the source is either $f$ (force or load) or $z$ (boundary control) see \cite{antil2018frontiers,JLLions_1971a,FTroeltzsch_2010a}. In \eqref{eq:Poisson} there
is no provision to place the source in $\widehat\Om$ (cf.~Figure~\ref{f:twodomains}). 
The issue is that the operator $\Delta$ has ``lesser reach", in other words, it is a 
local operator. On the other hand the fractional Laplacian $(-\Delta)^s$ with $0< s <1$ 
(see \eqref{eq11}) is a nonlocal operator. This difference in behavior can be
easily seen in our numerical examples in Section~\ref{s:source} where we observe that we cannot
see the external source as $s$ approaches 1.

Recently, nonlocal diffusion operators such as the fractional Laplacian $(-\Delta)^s$ have
emerged as an excellent alternative to model diffusion. Under a probabilistic framework
this operator can be derived as a limit of the so-called \emph{long jump} random walk \cite{EValdinoci_2009a}.
Recall that $\Delta$ is the limit of the classical random walk or the Brownian motion. 
More applications of these models appear in (but not limited to) image denoising and phase field modeling
\cite{antil2017spectral}; image denosing where $s$ is allowed to be spatially 
dependent \cite{HAntil_CNRautenberg_2018b}; fractional diffusion maps  (data analysis) \cite{antil2018fractional}; 
magnetotellurics (geophysics) \cite{CWeiss_BvBWaanders_HAntil_2018a}.


Coming back to the question of source/control placement, we next state the exterior value 
problem corresponding to $(-\Delta)^s$. Find $u$ in an appropriate function space satisfying
 \begin{equation}\label{eq:fracPoisson}
  (-\Delta)^s u = f \quad \mbox{in } \Om, \quad u = z \quad \mbox{on } \RR^N\setminus\Om .
 \end{equation} 
As in the case of \eqref{eq:Poisson}, besides $f$ being the source/control in $\Om$ we can 
also place the source/control $z$ in the exterior domain $\RR^N\setminus\Om$. However, the action of 
$z$ in \eqref{eq:fracPoisson} is 
significantly different from \eqref{eq:Poisson}. Indeed, the source/control in \eqref{eq:Poisson} is placed on the boundary $\pOm$, but the source/control $z$ in \eqref{eq:fracPoisson} is placed outside in $\RR^N\setminus\Om$ which is what we wanted to 
achieve in Figure~\ref{f:twodomains}. For completeness, we refer to \cite{antil2017optimal} for 
the optimal control problem, with $f$ being the source/control and \cite{antil2018b,antil2016optimal}  for another inverse problem to identify the coefficients in the fractional $p$-Laplacian.

The purpose of this paper is to introduce and study a new class of the Dirichlet, Robin, 
and Neumann source identification problems or the optimal control problems. We shall use 
these terms interchangeably but we will make a distinction in our numerical experiments. We emphasize that yet another class of identification 
where the unknown is the fractional exponent $s$ for the spectral fractional Laplacian
(which is different from the operator under consideration) was recently considered in 
\cite{sprekels2016new}. We shall describe our problems next.


Let $\Om \subset \RR^N$, $N \ge 1$, be a bounded open set with boundary $\pOm$. 
Let $(Z_D,U_D)$ and $(Z_{R},U_R)$, where subscripts $D$ and $R$ indicate Dirichlet 
and Robin, be Banach spaces. The goal of this 
paper is to consider the following two external control or source 
identification problems. The source/control in our case is denoted by $z$.

\begin{itemize}
 \item \emph{\bf Fractional Dirichlet exterior control problem:} Given $\xi \ge 0$
 a constant penalty parameter we consider the minimization problem:
 \begin{subequations}\label{eq:dcp}
 \begin{equation}\label{eq:Jd}
    \min_{(u,z)\in (U_D,Z_D)} J(u) + \frac{\xi}{2} \|z\|^2_{Z_D} ,
 \end{equation}
 subject to the fractional Dirichlet exterior value problem: Find $u \in U_D$ solving 
 \begin{equation}\label{eq:Sd}
 \begin{cases}
    (-\Delta)^s u &= 0 \quad \mbox{in } \Om, \\
                u &= z \quad \mbox{in } \RR^N\setminus \Om  ,
 \end{cases}                
 \end{equation} 
 and the control constraints 
 \begin{equation}\label{eq:Zd}
    z \in Z_{ad,D} ,
 \end{equation}
 \end{subequations}
 with $Z_{ad,D} \subset Z_D$ being a closed and convex subset. 

\item \emph{\bf Fractional Robin exterior control problem:} Given $\xi \ge 0$
 a constant penalty parameter we consider the minimization problem
 \begin{subequations}\label{eq:ncp}
 \begin{equation}\label{eq:Jn}
    \min_{(u,z)\in (U_R, Z_R)} J(u) + \frac{\xi}{2} \|z\|^2_{Z_R} ,
 \end{equation}
 subject to the fractional Robin exterior value problem: Find $u \in U_R$ solving 
 \begin{equation}\label{eq:Sn}
 \begin{cases}
    (-\Delta)^s u &= 0 \quad \mbox{in } \Om, \\
    \mathcal{N}_s u  +\kappa u &= \kappa z \quad \mbox{in } \RR^N\setminus \Om ,
 \end{cases}                
 \end{equation} 
 and the control constraints 
 \begin{equation}\label{eq:Zn}
    z \in Z_{ad,R} ,
 \end{equation}
 \end{subequations}
 with $Z_{ad,R} \subset Z_R$ being a closed and convex subset. In \eqref{eq:Sn}, $\mathcal N_su$ is the nonlocal normal derivative of $u$ given in \eqref{NLND} below, $\kappa\in L^1(\Omc)\cap L^\infty(\Omc)$ and is non-negative. We notice that the latter assumption is not a restriction since otherwise we can replace $\kappa$ throughout by $|\kappa|$.
 \end{itemize}

The precise conditions on $\Om$, $J$ and the Banach spaces involved will be
given in the subsequent sections. Notice that both the exterior value
problems \eqref{eq:Sd} and \eqref{eq:Sn} are ill-posed if the  conditions 
are enforced on $\pOm$. 
The main difficulties in \eqref{eq:dcp} 
and \eqref{eq:ncp} stem from the following facts.

 \begin{enumerate}[$\bullet$]
    \item{\bf Nonlocal operator.}
     The fractional Laplacian $(-\Delta)^s$ is a nonlocal operator. 
     This can be easily seen from the definition of $(-\Delta)^s$ in \eqref{eq11}.
     
    \item {\bf Double nonlocality.}
     The first order optimality conditions for \eqref{eq:dcp} and the Robin exterior
     value problem \eqref{eq:Sn} require to study $\mathcal{N}_s u$ which is the 
     so-called nonlocal-normal derivative of $u$ (see \eqref{NLND}).
     Thus we not only have the nonlocal operator $(-\Delta)^s$ on the domain but also on the 
     exterior $\RR^N\setminus\overline\Om$, i.e., a double nonlocality.  
    
    \item {\bf Exterior conditions in $\RR^N\setminus\Om$ and not boundary conditions on $\pOm$.}
     The conditions in \eqref{eq:Sd} and \eqref{eq:Sn} need to be specified in 
     $\RR^N\setminus \Omega$ instead on $\pOm$ as otherwise the problems 
     \eqref{eq:dcp} and \eqref{eq:ncp} are ill-posed as we have already mentioned above. 
     
    \item {\bf Very-weak solutions of nonlocal exterior value problems.}
     A typical choice for $Z$ is $L^2(\RR^N\setminus\Om)$. As a result, the Dirichlet
     exterior value problem \eqref{eq:Sd}  can only have very-weak solutions 
     (cf.~\cite{apelnicaisepfefferer:2015-1,apelnicaisepfefferer:2015-2,MR2084239} for the 
     case $s=1$). To the best of our knowledge this is the first work that considers the 
     notion of very-weak solutions for nonlocal (fractional) exterior value problems 
     associated with the fractional Laplace operator.     
     
    \item {\bf Regularity of optimization variables.} The standard 
     shift-theorem which holds for local operators such as $\Delta$ does not hold always
     hold for nonlocal operators such as $(-\Delta)^s$ (see for example \cite{Grub}).
 \end{enumerate}
In view of all these aforementioned challenges it is clear that the standard techniques
which are now well established for local problems do not directly extend to the nonlocal
problems investigated in the present paper.

The purpose of this paper is to discuss our approach to deal with these nontrivial 
issues. We emphasize that to the best of our knowledge this is the first work that considers 
the optimal control of problems (source identification problems) \eqref{eq:Sd} and \eqref{eq:Sn} where the control/source is applied 
from the outside.  Let us also mention that 
this notion of controllability of PDEs from the exterior has been introduced by M. Warma in \cite{warma2018approximate} for the nonlocal heat equation associated with 
the fractional Laplacian and in \cite{MW-CLR} for the wave type equation with 
the fractional Laplace operator to study their controllability properties. The case of the strong damping wave equation is included in \cite{WaZa} where some controllability results have been obtained. In case of problems with the spectral fractional Laplacian 
the boundary control has been established in \cite{antil2017fractional}.

We mention that we can also deal with the {\bf fractional Neumann exterior control problem}. That is, given $\xi \ge 0$
 a constant penalty parameter,
 \begin{equation*}
    \min_{(u,z)\in (U_N, Z_N)} J(u) + \frac{\xi}{2} \|z\|^2_{Z_N} ,
 \end{equation*}
 subject to the fractional Neumann exterior value problem: Find $u \in U_N$ solving 
 \begin{equation}\label{eq:SN}
 \begin{cases}
    (-\Delta)^s u +u&= 0 \quad \mbox{in } \Om, \\
    \mathcal{N}_s u  &= z \quad \mbox{in } \RR^N\setminus \Om ,
 \end{cases}                
 \end{equation} 
 and the control constraints 
 \begin{equation*}
    z \in Z_{ad,N} .
 \end{equation*}
 The term $u$ is added in \eqref{eq:SN} just to ensure the uniqueness of solutions. The proofs follow similarly as the two cases we consider in the present paper with very minor changes. Since the paper is already long, we shall not give any details on this case.


Below we mention further the key-novelties of the present paper: 

 \begin{enumerate}[$(i)$]
  \item For the first time, we introduce and study the notion of very-weak solutions to the Dirichlet exterior value problem \eqref{eq:Sd} which is suitable for optimal control problems. We also study weak solutions of the Robin exterior value problem \eqref{eq:Sn}.
        
  \item We approximate the weak solutions of nonhomogeneous Dirichlet exterior value problem by using a suitable Robin 
   exterior value problem. This allows us to circumvent approximating the nonlocal normal derivative.    
   This is a new approach to impose non-zero exterior conditions for
  the  fractional Dirichlet exterior value problem. We refer to an alternative approach 
   \cite{acosta2017finite} 
   where the authors use the Lagrange multipliers to impose nonzero Dirichlet exterior 
   conditions.
   
  \item We study both Dirichlet and Robin exterior control problems. 
  
  \item We approximate (with rate) the Dirichlet exterior control problem by a suitable Robin exterior control problem.
 \end{enumerate}
%

The rest of the paper is organized as follows. We begin with Section~\ref{s:not} where we 
introduce the relevant notations and function spaces. The material in this section is 
well-known. Our main work starts from Section~\ref{s:state} where at first we study
the weak and very-weak solutions for the Dirichlet exterior value problem in Subsection~\ref{s:dbvp}.
This is followed by the well-posedness of the Robin exterior value problem in Subsection~\ref{s:rbvp}. 
The Dirichlet exterior control problem is considered in Section~\ref{s:dcp} and Robin in 
Section~\ref{s:ncp}. We show how to approximate the weak solutions to Dirichlet problem and
the solutions to Dirichlet exterior control problem in Section~\ref{s:DirRob}. 
Subsection~\ref{s:RobinNum} is devoted to the experimental rate of convergence to approximate
the Dirichlet exterior value problem using the Robin problem. In Subsection~\ref{s:source} we consider a source identification problem in the classical sense, however our source is located outside the observation domain where the PDE is satisfied. Subsection~\ref{s:dcpNum} is devoted to two optimal control problems.

 \begin{remark}[\bf Practical aspects]
  {\rm From a practical point of view, having the source/control over the entire $\RR^N\setminus\Om$ 
  can be very expensive. But this can be easily fixed by appropriately describing 
  $Z_{ad}$. Indeed 
  in case of Figure~\ref{f:twodomains} we can set the support of functions in $Z_{ad}$  
  to be in $\widehat{\Om}\setminus\Omega$. 
  }
 \end{remark}

\section{Notation and Preliminaries}\label{s:not}

Unless otherwise stated, $\Om \subset\RR^N$ ($N \ge 1$) is a bounded  open set and $0 < s < 1$. 
We let 
 \[
    W^{s,2}(\Om) := \left\{ u \in L^2(\Om) \;:\; 
            \int_\Om\int_\Om \frac{u(x)-u(y)}{|x-y|^{N+2s}}\;dxdy < \infty \right\} ,
 \]
and we endow it with the norm defined by

 \[
    \|u\|_{W^{s,2}(\Om)} := \left(\int_\Om |u|^2\;dx 
        + \int_\Om\int_\Om  \frac{|u(x)-u(y)|^2}{|x-y|^{N+2s}}\;dxdy \right)^{\frac12}.      
 \]

In order to study \eqref{eq:Sd} we also need to define 
 \[
    W^{s,2}_0(\overline\Om) := \left\{ u \in W^{s,2}(\RR^N) \;:\; u = 0 \mbox{ in } 
            \RR^N\setminus\Om \right\} . 
 \]
Then
\begin{align*}
\|u\|_{W_0^{s,2}(\bOm)}:=\left(\int_{\RR^N}\int_{\RR^N}\frac{|u(x)-u(y)|^2}{|x-y|^{N+2s}}\;dxdy\right)^{\frac 12}
\end{align*}
defines an equivalent norm on $W_0^{s,2}(\bOm)$. 

We shall use $W^{-s,2}(\RR^N)$ and $W^{-s,2}(\overline\Om)$ to denote the dual spaces of $W^{s,2}(\RR^N)$
and $W_0^{s,2}(\overline\Om)$, respectively, and $\langle\cdot,\cdot\rangle$, to denote their
duality pairing whenever it is clear from the context. 
  
We also define the  local fractional order Sobolev space
 \begin{equation}\label{eq:Ws2loc}
    W^{s,2}_{\rm loc}(\Omc) := \left\{ u \in L^2(\Omc) \;:\; u\varphi \in W^{s,2}(\Omc), 
         \ \forall \ \varphi \in \mathcal{D}(\Omc) \right\}.  
 \end{equation}

To introduce the fractional Laplace operator, we let $0<s<1$, and we set 
\begin{equation*}
\mathbb{L}_s^{1}(\RR^N):=\left\{u:\RR^N\rightarrow
\mathbb{R}\;\mbox{
measurable, }\;\int_{\RR^N}\frac{|u(x)|}{(1+|x|)^{N+2s}}%
\;dx<\infty \right\}.
\end{equation*}%
For $u\in \mathbb{L}_s^{1}(\RR^N)$ and $ 
\varepsilon >0$, we let
\begin{equation*}
(-\Delta )_{\varepsilon }^{s}u(x)=C_{N,s}\int_{\{y\in \RR^N,|y-x|>\varepsilon \}}
\frac{u(x)-u(y)}{|x-y|^{N+2s}}dy,\;\;x\in\RR^N,
\end{equation*}%
where the normalized constant $C_{N,s}$ is given by
\begin{equation}\label{CN}
C_{N,s}:=\frac{s2^{2s}\Gamma\left(\frac{2s+N}{2}\right)}{\pi^{\frac
N2}\Gamma(1-s)},
\end{equation}%
and $\Gamma $ is the usual Euler Gamma function (see, e.g. \cite%
{BCF,Caf3,Caf1,Caf2,NPV,War-DN1,War}). The {\bf fractional Laplacian} 
$(-\Delta )^{s}$ is defined for $u\in \mathbb{L}_s^{1}(\RR^N)$  by the formula

\begin{align}
(-\Delta )^{s}u(x)=C_{N,s}\mbox{P.V.}\int_{\RR^N}\frac{u(x)-u(y)}{|x-y|^{N+2s}}dy 
=\lim_{\varepsilon \downarrow 0}(-\Delta )_{\varepsilon
}^{s}u(x),\;\;x\in\RR^N,\label{eq11}
\end{align}%
provided that the limit exists. It has been shown in \cite[Proposition 2.2]{BPS} that for 
$u \in \mathcal{D}(\Om)$, we have that
 \[
    \lim_{s\uparrow 1}\int_{\RR^N} u (-\Delta)^su\;dx 
        = \int_{\RR^N} |\nabla u|^2 dx 
        = - \int_{\RR^N} u \Delta u\;dx 
        = - \int_{\Om} u \Delta u\;dx,
 \]
that is where the constant $C_{N,s}$ plays a crucial role.

Next, for $u \in W^{s,2}(\RR^N)$ we define the nonlocal normal derivative $\mathcal{N}_s$ as:
 \begin{align}\label{NLND}
    \mathcal{N}_s u(x) := C_{N,s} \int_\Om \frac{u(x)-u(y)}{|x-y|^{N+2s}}\;dy, 
            \quad x \in \RR^N \setminus \overline\Om . 
 \end{align}
 We shall call $\mathcal N_s$ the {\em interaction operator}.
Clearly $\mathcal{N}_s$ is a nonlocal operator and it is well defined on $W^{s,2}(\RR^N)$
as we discuss next.

 \begin{lemma}\label{lem:Nmap}
  The interaction operator $\mathcal{N}_s$ maps continuously $W^{s,2}(\RR^N)$ into
 $ W^{s,2}_{\rm loc}(\RR^N\setminus\Om)$. 
  As a result, if  $u \in W^{s,2}(\RR^N)$, then $\mathcal{N}_s u \in L^2(\RR^N\setminus\Om)$.
 \end{lemma}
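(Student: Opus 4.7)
My plan is to verify the defining properties of $W^{s,2}_{\rm loc}(\Omc)$ directly: for each test function $\varphi\in\mathcal{D}(\Omc)$ with compact support $K$ and $\delta:=\mathrm{dist}(K,\overline\Om)>0$, I would control $\|\varphi\mathcal{N}_s u\|_{W^{s,2}(\Omc)}$ by $C(\varphi,\delta)\|u\|_{W^{s,2}(\RR^N)}$; the $L^2(\Omc)$ conclusion then falls out from the same Cauchy--Schwarz argument and the $L^2$ membership built into the definition of $W^{s,2}_{\rm loc}$.

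The starting point is the pointwise bound
\begin{equation*}
|\mathcal{N}_s u(x)|^2\le C_{N,s}^2\left(\int_\Om\frac{|u(x)-u(y)|^2}{|x-y|^{N+2s}}\,dy\right)\left(\int_\Om\frac{dy}{|x-y|^{N+2s}}\right),
\end{equation*}
obtained by splitting the kernel as $|x-y|^{-(N+2s)/2}\cdot|x-y|^{-(N+2s)/2}$ and applying Cauchy--Schwarz. For $x\in K$ the second factor is bounded uniformly by $|\Om|\delta^{-(N+2s)}$, and integration combined with Fubini yields the $L^2$ estimate $\|\varphi\mathcal{N}_s u\|_{L^2(\Omc)}\le C(\varphi,\delta)\|u\|_{W^{s,2}(\RR^N)}$ after recognising that $\int_K\int_\Om\frac{|u(x)-u(y)|^2}{|x-y|^{N+2s}}\,dy\,dx\le[u]^2_{W^{s,2}(\RR^N)}$.

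For the Gagliardo seminorm I would use the Leibniz splitting
\begin{equation*}
\varphi(x)\mathcal{N}_s u(x)-\varphi(x')\mathcal{N}_s u(x') = \varphi(x)[\mathcal{N}_s u(x)-\mathcal{N}_s u(x')] + [\varphi(x)-\varphi(x')]\mathcal{N}_s u(x'),
\end{equation*}
so that the second term is absorbed by the Lipschitz bound on $\varphi$ together with the pointwise $L^2$ control from the previous step, and further split the $y$-integrand of $\mathcal{N}_s u(x)-\mathcal{N}_s u(x')$ as
\begin{equation*}
\frac{u(x)-u(y)}{|x-y|^{N+2s}}-\frac{u(x')-u(y)}{|x'-y|^{N+2s}} = \frac{u(x)-u(x')}{|x-y|^{N+2s}} + (u(x')-u(y))\left[\frac{1}{|x-y|^{N+2s}}-\frac{1}{|x'-y|^{N+2s}}\right].
\end{equation*}
The first piece integrates cleanly against the Gagliardo seminorm of $u$ via Fubini; the second piece relies on a mean-value estimate $\bigl||x-y|^{-(N+2s)}-|x'-y|^{-(N+2s)}\bigr|\le C_\delta|x-x'|$ that is valid on $K\times\Om$ since $|x-y|,|x'-y|\ge\delta$ there.

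The main obstacle is fitting this kernel-difference term under the Gagliardo denominator $|x-x'|^{N+2s}$: the Lipschitz factor $|x-x'|$ leaves, after division, a weight $|x-x'|^{2-(N+2s)}$ that is locally integrable on the diagonal precisely because $s<1$; the leftover $y$-integration is then absorbed into $\|u\|_{L^2(\RR^N)}$ by Cauchy--Schwarz. A standard near-diagonal/far-from-diagonal decomposition of $\Omc\times\Omc$ handles the non-compactness of $\Omc$, the far-from-diagonal part being controlled by the $L^2$ bound on $\varphi\mathcal{N}_s u$ already established. Assembling the three contributions yields the asserted continuity of $\mathcal{N}_s:W^{s,2}(\RR^N)\to W^{s,2}_{\rm loc}(\Omc)$, and the $L^2(\Omc)$ membership of $\mathcal{N}_s u$ is the immediate corollary.
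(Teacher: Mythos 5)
The paper itself gives no proof---it merely cites Lemma~3.2 of Ghosh--Salo--Uhlmann for the first assertion and then declares the $L^2(\Omc)$ membership a ``direct consequence'' of the definition~\eqref{eq:Ws2loc}. Your direct argument for the first assertion is sound and is the genuinely interesting content here: the Cauchy--Schwarz pointwise bound, the Leibniz split of $\varphi\mathcal{N}_s u$, the algebraic decomposition of $\mathcal{N}_s u(x)-\mathcal{N}_s u(x')$ into a $(u(x)-u(x'))$-term plus a kernel-difference term, the mean-value estimate $\bigl||x-y|^{-(N+2s)}-|x'-y|^{-(N+2s)}\bigr|\lesssim_\delta|x-x'|$ on $K\times\Om$, and the observation that the residual weight $|x-x'|^{2-N-2s}$ is integrable on the diagonal because $s<1$, all fit together correctly. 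The near/far-from-diagonal decomposition also works: on $\{|x-x'|\geq r_0\}$ you split by triangle inequality and integrate out $\int_{|x-x'|\geq r_0}|x-x'|^{-(N+2s)}\,dx'\simeq r_0^{-2s}$ against $\|\varphi\mathcal{N}_s u\|_{L^2}^2$, which your earlier step controls. This gives $\|\varphi\mathcal{N}_s u\|_{W^{s,2}(\Omc)}\leq C(\varphi,\delta)\|u\|_{W^{s,2}(\RR^N)}$ for every $\varphi\in\mathcal{D}(\Omc)$, a cleaner and more quantitative argument than the paper's bare citation.

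The gap is in the final step. The paper's definition~\eqref{eq:Ws2loc} of $W^{s,2}_{\rm loc}(\Omc)$ \emph{requires} $u\in L^2(\Omc)$ globally, so ``$\mathcal{N}_s u\in W^{s,2}_{\rm loc}(\Omc)\Rightarrow\mathcal{N}_s u\in L^2(\Omc)$'' is circular: the global $L^2(\Omc)$ membership is a hypothesis you must verify to get into that space, not a corollary you can extract afterwards. What your argument actually proves is $\varphi\mathcal{N}_s u\in L^2(\Omc)$ for each $\varphi\in\mathcal{D}(\Omc)$, i.e.\ $\mathcal{N}_s u\in L^2_{\rm loc}(\Omc)$, and this cannot be upgraded to $L^2(\Omc)$ by a uniform-in-$\varphi$ argument, because the factor $\int_\Om|x-y|^{-(N+2s)}\,dy$ in your pointwise Cauchy--Schwarz bound behaves like $\mathrm{dist}(x,\Om)^{-2s}$ as $x$ approaches $\partial\Om$ from outside. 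In fact the global $L^2(\Omc)$ claim is \emph{false} in general: take $N=1$, $\Om=(0,1)$, $u=\chi_\Om$, and $s\in[1/4,1/2)$. Then $u\in W^{s,2}(\RR)$ (indicator functions lie in $W^{s,2}$ iff $s<1/2$), and for $x=-\varepsilon<0$ one has $\mathcal{N}_s u(-\varepsilon)=-C_{1,s}\int_0^1(y+\varepsilon)^{-(1+2s)}\,dy\sim-\tfrac{C_{1,s}}{2s}\varepsilon^{-2s}$, which fails to be square integrable near $\varepsilon=0$ precisely when $4s\geq 1$. So you should either record only the $L^2_{\rm loc}(\Omc)$ (equivalently, $L^2$ on compact subsets of $\Omc$) conclusion, or restrict the global claim to $s<1/4$ (or to functions with additional boundary decay); as stated, the final corollary cannot be proved because it does not hold.
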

 
 \begin{proof}
  We refer to \cite[Lemma~3.2]{ghosh2016calder} for the proof of the first part. 
  The second part is a direct consequence of  \eqref{eq:Ws2loc}.
 \end{proof}
 
Despite the fact that $\mathcal{N}_s$ is defined on $\RR^N \setminus \Om$, it
is still known as the ``normal" derivative. This is due to its similarity with the 
classical normal derivative as we shall discuss next.

 \begin{proposition}\label{prop:prop}
 The following assertions hold.
  \begin{enumerate}
    \item {\bf The divergence theorem for $(-\Delta)^s$.}  Let $u\in C_0^2(\RR^N)$, i.e., $C^2$ functions on $\RR^N$ that vanishes at $\pm \infty$. Then
      $$
       \int_\Om (-\Delta)^s u\;dx = -\int_{\RR^N\setminus\Om} \mathcal{N}_s u\;dx.
      $$ 
      
    \item {\bf The integration by parts formula for $(-\Delta)^s$.}   Let $u \in W^{s,2}(\RR^N)$ be such that $(-\Delta)^su \in L^2(\Omega)$. Then for every $v\in W^{s,2}(\RR^N)$ we have that
      \begin{align}\label{Int-Part}
       \frac{C_{N,s}}{2} 
        \int\int_{\RR^{2N}\setminus(\RR^N\setminus\Om)^2} 
         &\frac{(u(x)-u(y))(v(x)-v(y))}{|x-y|^{N+2s}} \;dxdy \notag\\
        &= \int_\Om v(-\Delta)^s u\;dx + \int_{\RR^N\setminus\Om} v\mathcal{N}_s u\;dx ,
      \end{align}
      where $\RR^{2N}\setminus(\RR^N\setminus\Om)^2
       := (\Om\times\Om)\cup(\Om\times(\RR^N\setminus\Om))\cup((\RR^N\setminus\Om)\times\Om)$. 
     
    \item {\bf The limit as $s\uparrow 1$.}    Let $u,v\in C_0^2(\RR^N)$. Then
      $$
       \lim_{s\uparrow 1}\int_{\RR^N\setminus\Om} v \mathcal{N}_s u\;dx 
        = \int_{\pOm} \frac{\partial u}{\partial\nu} v d\sigma . 
      $$  
  \end{enumerate}
 \end{proposition}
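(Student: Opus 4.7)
The plan is to start from the definition
$\int_\Omega (-\Delta)^s u\,dx = C_{N,s}\int_\Omega \mathrm{P.V.}\int_{\RR^N}\frac{u(x)-u(y)}{|x-y|^{N+2s}}\,dy\,dx$
and split the inner $\RR^N$-integral as $\Omega \cup (\RR^N\setminus\Omega)$. On $\Omega\times\Omega$ the integrand is antisymmetric under the swap $(x,y)\leftrightarrow(y,x)$, so after rewriting the principal-value integral as a symmetric limit over $\{|x-y|>\varepsilon\}$ the contribution vanishes. On $\Omega\times(\RR^N\setminus\Omega)$ the singularity is absent, so Fubini applies (the decay of $u\in C_0^2(\RR^N)$ controls the tails), and exchanging the order of integration yields $-\int_{\RR^N\setminus\Omega}\mathcal N_s u(y)\,dy$ by the very definition \eqref{NLND}.

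\textbf{Plan for Part (2).} The key identity is the symmetrization
$(u(x)-u(y))(v(x)-v(y)) = v(x)(u(x)-u(y)) + v(y)(u(y)-u(x))$, together with the fact that $Q:=\RR^{2N}\setminus(\RR^N\setminus\Omega)^2$ is symmetric in $(x,y)$. A swap of variables in the second summand then gives
\begin{equation*}
\frac{C_{N,s}}{2}\iint_Q \frac{(u(x)-u(y))(v(x)-v(y))}{|x-y|^{N+2s}}\,dx\,dy = C_{N,s}\iint_Q \frac{v(x)(u(x)-u(y))}{|x-y|^{N+2s}}\,dx\,dy.
\end{equation*}
Next I decompose $Q = (\Omega\times\RR^N) \cup ((\RR^N\setminus\Omega)\times\Omega)$ (a disjoint union up to a null set). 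On $\Omega\times\RR^N$, the inner integral in $y$ reproduces $(-\Delta)^s u(x)/C_{N,s}$ (once symmetrization has removed the need for a principal value), giving $\int_\Omega v(-\Delta)^s u\,dx$. On $(\RR^N\setminus\Omega)\times\Omega$, the inner integral in $y$ is exactly $\mathcal N_s u(x)/C_{N,s}$, yielding $\int_{\RR^N\setminus\Omega}v\mathcal N_s u\,dx$. Throughout, the hypotheses $u\in W^{s,2}(\RR^N)$, $(-\Delta)^su\in L^2(\Omega)$, $v\in W^{s,2}(\RR^N)$ combined with Lemma~\ref{lem:Nmap} guarantee the absolute convergence needed to apply Fubini on each piece.

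\textbf{Plan for Part (3).} I will apply Part (2) with $u,v\in C_0^2(\RR^N)$ and pass to the limit $s\uparrow 1$. Writing $\iint_Q = \iint_{\RR^{2N}}-\iint_{(\RR^N\setminus\Omega)^2}$, the Bourgain--Brezis--Mironescu type limit used in the excerpt (which extends from $\mathcal D(\Omega)$ to $C_0^2$ functions on any open set, since the constant $C_{N,s}$ has precisely been tuned to make this hold) gives
\begin{equation*}
\frac{C_{N,s}}{2}\iint_Q \frac{(u(x)-u(y))(v(x)-v(y))}{|x-y|^{N+2s}}\,dx\,dy \;\longrightarrow\; \int_{\RR^N}\nabla u\cdot\nabla v\,dx - \int_{\RR^N\setminus\Omega}\nabla u\cdot\nabla v\,dx = \int_\Omega \nabla u\cdot\nabla v\,dx.
\end{equation*}
Using the same limit on $\int_\Omega v(-\Delta)^s u\,dx \to -\int_\Omega v\Delta u\,dx$ and rearranging the integration by parts formula from Part (2) gives
$\lim_{s\uparrow 1}\int_{\RR^N\setminus\Omega} v\mathcal N_s u\,dx = \int_\Omega \nabla u\cdot\nabla v\,dx + \int_\Omega v\Delta u\,dx$,
and the classical divergence theorem on $\Omega$ collapses the right-hand side to $\int_{\pOm}v\,\partial u/\partial\nu\,d\sigma$.

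\textbf{Main obstacle.} Parts (1) and (2) are bookkeeping once the symmetrization trick is in place; the delicate step is Part (3), where one must justify passing to the limit $s\uparrow 1$ in the Gagliardo-type double integral over the possibly unbounded set $\RR^N\setminus\Omega$. The cited BBM-type statement in the excerpt is phrased for $u\in\mathcal D(\Omega)$, whereas here I need it simultaneously on $\RR^{2N}$ and on $(\RR^N\setminus\Omega)^2$ for $C_0^2(\RR^N)$ functions; invoking the full BBM theorem (or a direct argument using the compactness of the support of $u,v$ at infinity and uniform boundedness of the seminorms in $s$ near $1$) is the step requiring the most care.
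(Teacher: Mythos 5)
The paper's own proof of this proposition is purely by citation: Parts (a) and (c) are deferred to \cite[Lemma 3.2, Proposition 5.1]{SDipierro_XRosOton_EValdinoci_2017a}, and Part (b) invokes \cite[Lemma 3.3]{SDipierro_XRosOton_EValdinoci_2017a} for smooth $u,v$ together with a density argument from \cite{warma2018approximate}. Your plan reconstructs the arguments directly, which is a genuinely different and more self-contained route. Part (1) is correct and is essentially the cited argument.

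For Part (2), the symmetrization identity and the decomposition $Q = (\Omega\times\RR^N)\cup((\RR^N\setminus\Omega)\times\Omega)$ are the right ingredients, but the parenthetical claim that ``symmetrization has removed the need for a principal value'' is false, and the assertion that the hypotheses ``guarantee the absolute convergence needed to apply Fubini on each piece'' is wrong on the piece $\Omega\times\RR^N$. Before symmetrization the integrand over $Q$ is absolutely integrable because $u,v\in W^{s,2}(\RR^N)$; after symmetrization the integrand $v(x)(u(x)-u(y))/|x-y|^{N+2s}$ is \emph{not} absolutely integrable near the diagonal $x=y$ inside $\Omega\times\Omega$, so the inner $y$-integral over $\RR^N$ remains a genuine principal value. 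What actually makes the argument work is that the symmetrization identity holds between symmetric truncations $\iint_{Q\cap\{|x-y|>\epsilon\}}$, and on $\Omega\times\RR^N$ the hypothesis $(-\Delta)^s u\in L^2(\Omega)$ is precisely what lets you pass the $\epsilon\to 0$ limit through the $x$-integral (by dominated convergence, not Fubini) to obtain $\int_\Omega v(-\Delta)^s u\,dx$; Fubini only applies on $(\RR^N\setminus\Omega)\times\Omega$, where absolute convergence follows from Lemma~\ref{lem:Nmap}. Patching this up in the stated generality amounts to the two-step strategy the paper uses: prove the identity for smooth $u,v$ (where the PV is classical) and then extend by density.

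For Part (3), deriving the limit from Part (2) together with BBM-type limits of the Gagliardo seminorms on $\RR^N$ and on $\RR^N\setminus\Omega$ is a valid alternative to the direct kernel computation in the cited reference, and you correctly identify the need to justify the BBM limit over the unbounded exterior as the delicate step. One further issue you should note: as defined in the statement, $C_0^2(\RR^N)$ means $C^2$ and vanishing at infinity, which does not imply compact support nor $\nabla u\in L^2(\RR^N)$; the BBM limits you invoke require such integrability, so either additional decay is tacitly assumed or one should restrict to compactly supported functions. As written, Part (3) is a correct program rather than a complete proof, which you acknowledge.
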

 
 \begin{remark} 
  \rm{
  Comparing (a)-(c) in Proposition~\ref{prop:prop} with the classical properties of the standard
  Laplacian $\Delta$ we can immediately infer that $\mathcal{N}_s$ plays the same role for 
  $(-\Delta)^s$ that the classical normal derivative does for $\Delta$. For this reason, we 
  call $\mathcal{N}_s$ the nonlocal normal derivative. 
  }
 \end{remark} 
 
 \begin{proof}[\bf Proof of Proposition~\ref{prop:prop}]
 The proofs of Parts (a) and (c) are contained in \cite[Lemma 3.2]{SDipierro_XRosOton_EValdinoci_2017a} and \cite[Proposition 5.1]{SDipierro_XRosOton_EValdinoci_2017a}, respectively. The proof of Part (b) for smooth functions can be found in \cite[Lemma 3.3]{SDipierro_XRosOton_EValdinoci_2017a}. The version given here is obtained by using a density argument (cf. \cite[Proposition~3.7]{warma2018approximate}).
 \end{proof}

\section{The state equations}\label{s:state}
Before analyzing the optimal control problems \eqref{eq:dcp} and \eqref{eq:ncp}, for a given
function $z$ we shall focus on the Dirichlet \eqref{eq:Sd} and Robin \eqref{eq:Sn} exterior
value problems. We shall assume that $\Om$ is a bounded domain with Lipschitz continuous boundary.

\subsection{The Dirichlet problem for the fractional Laplacian}\label{s:dbvp}

We begin by rewriting the system \eqref{eq:Sd} in a more general form
 \begin{equation}\label{eq:Sd_1}
 \begin{cases}
    (-\Delta)^s u = f \quad &\mbox{in } \Om, \\
                u = z \quad &\mbox{in } \RR^N\setminus \Om .
 \end{cases}                
 \end{equation} 

Here is our notion of weak solutions.

 \begin{definition}[\bf Weak solution to the Dirichlet problem] 
 \label{def:weak_d}
    Let $f \in W^{-s,2}(\overline\Om)$, $z \in W^{s,2}(\RR^N\setminus\Om)$ and 
    $\mathcal{Z} \in W^{s,2}(\RR^N)$ be such that $\mathcal{Z}|_{\RR^N\setminus\Om} = z$. A    
    $u \in W^{s,2}(\RR^N)$ is said to be a weak solution to \eqref{eq:Sd_1} if 
    $u-\mathcal{Z} \in W^{s,2}_0(\overline\Om)$ and 
    \[
     \frac{C_{N,s}}{2} 
        \int_{\RR^N}\int_{\RR^{N}} 
         \frac{(u(x)-u(y))(v(x)-v(y))}{|x-y|^{N+2s}} \;dxdy 
         = \langle f,v \rangle  , 
    \]
    for every $v \in W^{s,2}_0(\overline\Om)$. 
 \end{definition}
 
 Firstly, we notice that since $\Omega$ is assumed to have a Lipschitz continuous boundary, we have that, for $z\in W^{s,2}(\Omc)$, there exists $\mathcal{Z}\in W^{s,2}(\RR^N)$ such that $\mathcal{Z}|_{\Omc}=z$.  Secondly, the existence and uniqueness of a weak solution $u$ to \eqref{eq:Sd_1} and the continuous dependence
of $u$ on the data $f$ and $z$ have been considered in \cite{GGrubb_2015a}, see also \cite{ghosh2016calder,MIVisik_GIEskin_1965a}. More precisely we have the following result.

 \begin{proposition}\label{prop:weak_Dir}
  Let  $f \in W^{-s,2}(\overline\Om)$ and $z \in W^{s,2}(\RR^N \setminus\Om)$. Then 
  there exists a unique weak solution $u$ to 
  \eqref{eq:Sd_1} in the sense of Definition~\ref{def:weak_d}. In addition there is a  constant $C>0$ such that
  \begin{align}\label{Es-DS}
   \|u\|_{W^{s,2}(\RR^N)} 
    \le C \left(\|f\|_{W^{-s,2}(\overline\Om)} 
                + \|z\|_{W^{s,2}(\RR^N \setminus\Om)} \right).
  \end{align}
\end{proposition}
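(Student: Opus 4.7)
The plan is to invoke the Lax--Milgram theorem after reducing to a problem with homogeneous exterior data. Since $\partial\Omega$ is Lipschitz, the excerpt already observes that any $z\in W^{s,2}(\Omc)$ admits an extension $\mathcal{Z}\in W^{s,2}(\RR^N)$ with $\mathcal Z|_{\Omc}=z$; moreover this extension can be chosen to depend linearly and boundedly on $z$, so that $\|\mathcal Z\|_{W^{s,2}(\RR^N)}\le C\|z\|_{W^{s,2}(\Omc)}$. Fix such $\mathcal Z$ and seek $u$ in the form $u=w+\mathcal Z$ with $w\in W_0^{s,2}(\bOm)$.

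Substituting into Definition~\ref{def:weak_d} produces the equivalent variational problem: find $w\in W_0^{s,2}(\bOm)$ such that $a(w,v)=F(v)$ for every $v\in W_0^{s,2}(\bOm)$, where
\[
a(w,v):=\frac{C_{N,s}}{2}\int_{\RR^N}\!\int_{\RR^N}\frac{(w(x)-w(y))(v(x)-v(y))}{|x-y|^{N+2s}}\,dxdy,\qquad F(v):=\langle f,v\rangle - a(\mathcal Z,v).
\]
I would then verify the three hypotheses of Lax--Milgram on the Hilbert space $W_0^{s,2}(\bOm)$ endowed with its natural norm. Continuity of $a$ is an immediate Cauchy--Schwarz estimate; coercivity is precisely the statement, recalled just after the definition of $W_0^{s,2}(\bOm)$ in Section~\ref{s:not}, that $\|\cdot\|_{W_0^{s,2}(\bOm)}$ is an equivalent norm. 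The linear form $F$ is continuous because $f\in W^{-s,2}(\bOm)$ and $a(\mathcal Z,\cdot)$ is bounded on $W_0^{s,2}(\bOm)$ by $C\|\mathcal Z\|_{W^{s,2}(\RR^N)}$, so that altogether $|F(v)|\le(\|f\|_{W^{-s,2}(\bOm)}+C\|\mathcal Z\|_{W^{s,2}(\RR^N)})\|v\|_{W_0^{s,2}(\bOm)}$.

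Lax--Milgram then yields a unique $w\in W_0^{s,2}(\bOm)$, and $u:=w+\mathcal Z\in W^{s,2}(\RR^N)$ is a weak solution. For uniqueness of $u$, if $u_1,u_2$ both satisfy Definition~\ref{def:weak_d}, then $u_1-u_2$ vanishes in $\Omc$, hence belongs to $W_0^{s,2}(\bOm)$, and it satisfies $a(u_1-u_2,v)=0$ for all $v\in W_0^{s,2}(\bOm)$; testing against $v=u_1-u_2$ and invoking coercivity forces $u_1=u_2$. The same uniqueness argument shows that $u$ is independent of the particular extension chosen, which is needed for the definition to be well posed.

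Finally, \eqref{Es-DS} is obtained by combining the Lax--Milgram estimate $\|w\|_{W^{s,2}(\RR^N)}\le C(\|f\|_{W^{-s,2}(\bOm)}+\|\mathcal Z\|_{W^{s,2}(\RR^N)})$ with the triangle inequality $\|u\|_{W^{s,2}(\RR^N)}\le \|w\|_{W^{s,2}(\RR^N)}+\|\mathcal Z\|_{W^{s,2}(\RR^N)}$, and then using the bounded extension to replace $\|\mathcal Z\|_{W^{s,2}(\RR^N)}$ by $C\|z\|_{W^{s,2}(\Omc)}$. The main subtlety in this plan is not the Lax--Milgram step itself but the construction of a bounded linear extension operator from $W^{s,2}(\Omc)$ into $W^{s,2}(\RR^N)$; this relies on the Lipschitz regularity of $\partial\Omega$ (so that $\Omc$ is a Lipschitz set in its own right) and is the ingredient that most directly links the assumption on $\Omega$ to the estimate.
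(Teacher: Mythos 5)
Your proof is correct, and it supplies a self-contained argument where the paper itself merely cites the existence result to Grubb (and Ghosh--Salo--Uhlmann, Vi\v{s}ik--Eskin) after noting that the Lipschitz boundary gives an extension $\mathcal Z\in W^{s,2}(\RR^N)$ of $z$. The route you take --- lift the exterior data via a bounded extension operator, reduce to a homogeneous problem in $W_0^{s,2}(\bOm)$, and apply Lax--Milgram using the equivalence of the Gagliardo seminorm with the full norm on $W_0^{s,2}(\bOm)$ --- is exactly the standard argument those references embody, and all the steps (continuity and coercivity of $a$, boundedness of $F$, uniqueness via testing the difference against itself, independence of the choice of extension, and the triangle-inequality derivation of the a priori bound) are carried out correctly. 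You also correctly identify the one genuinely nontrivial ingredient, namely the bounded linear extension operator $W^{s,2}(\Omc)\to W^{s,2}(\RR^N)$, which is where the Lipschitz assumption on $\partial\Omega$ enters.
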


 
Even though such a result is typically sufficient in most situations, nevertheless it is not
directly useful in the current context of optimal control problem \eqref{eq:dcp} since we are 
interested in taking the space $Z_D = L^2(\RR^N\setminus\Om)$. 
Thus we need existence of solution (in some sense) to the fractional Dirichlet
problem \eqref{eq:Sd_1} when the datum $z \in L^2(\RR^N\setminus\Om)$. In order to tackle 
this situation we introduce the notion of very-weak solutions for \eqref{eq:Sd_1}.

 \begin{definition}[\bf Very-weak solution to the Dirichlet problem] 
 \label{def:vweak_d}
    Let $z \in L^2(\RR^N\setminus\Om)$ and $f \in W^{-s,2}(\overline\Om)$. 
    A  $u \in L^2(\RR^N)$ is said to be a very-weak solution to \eqref{eq:Sd_1} if the identity
    \begin{equation}\label{eq:vw_d}
      \int_{\Om} 
         u (-\Delta)^s v\;dx
         = \langle f,v \rangle - \int_{\RR^N\setminus\Om} z \mathcal{N}_s v\;dx,
    \end{equation}
   holds for every $v \in V := \{v \in W^{s,2}_0(\overline\Om) \;:\; (-\Delta)^s v
    \in L^2(\Om) \}$. 
 \end{definition}

Next we prove the existence and uniqueness of a very-weak solution to \eqref{eq:Sd_1} in the sense
of Definition~\ref{def:vweak_d}.

 \begin{theorem}\label{thm:vwdexist}
  Let $f \in W^{-s,2}(\overline\Om)$ and $z \in L^2(\RR^N\setminus\Om)$. Then there exists 
  a unique very-weak solution $u$ to 
  \eqref{eq:Sd_1} according to Definition~\ref{def:weak_d} that fulfills
  \begin{align}\label{VWS_EST}
   \|u\|_{L^2(\Om)} \le C\left(\|f\|_{W^{-s,2}(\overline\Om)} 
       + \|z\|_{L^2(\RR^N \setminus\Om)} \right),
  \end{align}
for a constant $C >0$. In addition, if $z\in W^{s,2}(\Omc)$, then the following assertions hold.
\begin{enumerate}
\item Every weak solution of \eqref{eq:Sd_1} is also a very-weak solution.
\item Every very-weak solution of \eqref{eq:Sd_1} that belongs to $W^{s,2}(\RR^N)$ is also a weak solution.
\end{enumerate}
 \end{theorem}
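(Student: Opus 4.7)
The plan is to prove existence and uniqueness by the classical transposition (Lions--Magenes) method, using Proposition~\ref{prop:weak_Dir} with zero exterior datum as the ``regularity lifter'' for the test functions. Namely, I would first introduce the test space $V$ equipped with the graph norm $\|v\|_V := \|v\|_{W^{s,2}_0(\overline\Om)} + \|(-\Delta)^s v\|_{L^2(\Om)}$, and observe that by Proposition~\ref{prop:weak_Dir} the map $(-\Delta)^s\colon V\to L^2(\Om)$, $v\mapsto (-\Delta)^s v$, is a topological isomorphism. Next, given $f\in W^{-s,2}(\overline\Om)$ and $z\in L^2(\Omc)$, I would define the linear functional
\begin{equation*}
 L(g) := \langle f, v_g\rangle - \int_{\Omc} z\,\mathcal{N}_s v_g\,dx,\qquad g\in L^2(\Om),
\end{equation*}
where $v_g\in V$ is the unique weak solution of $(-\Delta)^s v_g = g$ in $\Om$ with $v_g = 0$ in $\Omc$. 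Continuity of $L$ on $L^2(\Om)$ follows from $\|v_g\|_{W^{s,2}_0(\bOm)}\lesssim \|g\|_{L^2(\Om)}$ and from the continuity $\mathcal{N}_s\colon W^{s,2}(\RR^N)\to L^2(\Omc)$ provided by Lemma~\ref{lem:Nmap} (the closed graph theorem upgrades the membership statement to a norm bound). An application of the Riesz representation theorem then produces a unique $u\in L^2(\Om)$ with $\int_\Om u\,g\,dx = L(g)$, and choosing $g=(-\Delta)^s v$ for arbitrary $v\in V$ yields exactly \eqref{eq:vw_d}; extending $u$ by $z$ on $\Omc$ yields the desired element of $L^2(\RR^N)$ and, tracking the constants, produces the bound \eqref{VWS_EST}. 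Uniqueness is immediate: the difference of two very-weak solutions is orthogonal in $L^2(\Om)$ to the range of $(-\Delta)^s$ on $V$, which is all of $L^2(\Om)$.

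For assertion (1), assume $z\in W^{s,2}(\Omc)$ and let $u\in W^{s,2}(\RR^N)$ be a weak solution. Pick any $v\in V\subset W^{s,2}_0(\overline\Om)$; since $v$ vanishes on $\Omc$, the bilinear form in Definition~\ref{def:weak_d} reduces to the domain $\RR^{2N}\setminus(\Omc)^2$. Applying the integration by parts identity \eqref{Int-Part} with the roles of $u$ and $v$ interchanged (which is legitimate because $u\in W^{s,2}(\RR^N)$ and $(-\Delta)^s v\in L^2(\Om)$) converts the bilinear form into $\int_\Om u(-\Delta)^s v\,dx + \int_{\Omc} u\,\mathcal{N}_s v\,dx$. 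Since $u=z$ on $\Omc$, rearranging gives \eqref{eq:vw_d}.

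For assertion (2), let $u\in W^{s,2}(\RR^N)$ be a very-weak solution (so in particular $u=z$ on $\Omc$, hence $u-\mathcal{Z}\in W^{s,2}_0(\overline\Om)$ for any $W^{s,2}(\RR^N)$-extension $\mathcal Z$ of $z$). Reversing the computation above, for every $v\in V$ the integration by parts formula \eqref{Int-Part} combined with $u|_{\Omc}=z$ converts the very-weak identity back into the bilinear form equaling $\langle f,v\rangle$. To extend to all test functions $v\in W^{s,2}_0(\overline\Om)$, I would invoke density of $V$ in $W^{s,2}_0(\overline\Om)$: this follows since $\mathcal{D}(\Om)\subset V$ (as $(-\Delta)^s\varphi\in L^2(\Om)$ for $\varphi\in\mathcal{D}(\Om)$) and $\mathcal{D}(\Om)$ is dense in $W^{s,2}_0(\overline\Om)$. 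Passing to the limit in the bilinear form, which is continuous with respect to $\|\cdot\|_{W^{s,2}_0(\overline\Om)}$, closes the argument.

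The main obstacle I expect is the continuity estimate $\|\mathcal{N}_s v_g\|_{L^2(\Omc)}\lesssim \|g\|_{L^2(\Om)}$ needed for the transposition step: Lemma~\ref{lem:Nmap} as stated yields only the $W^{s,2}_{\loc}(\Omc)$ continuity and the $L^2(\Omc)$ membership, so one must either invoke the closed graph theorem on $\mathcal{N}_s\colon W^{s,2}(\RR^N)\to L^2(\Omc)$ or prove the bound by hand using the fact that $v_g=0$ outside $\Om$ (so that $\mathcal{N}_s v_g(x) = -C_{N,s}\int_\Om v_g(y)|x-y|^{-(N+2s)}\,dy$ for $x\in\Omc$) together with the natural elliptic bound $\|v_g\|_{W^{s,2}_0(\bOm)}\lesssim \|g\|_{L^2(\Om)}$. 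Once this quantitative version of Lemma~\ref{lem:Nmap} is in hand, all the remaining steps are routine applications of Riesz representation and the integration by parts identity of Proposition~\ref{prop:prop}(b).
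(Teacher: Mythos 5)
Your transposition argument via Riesz representation is mathematically equivalent to the paper's Babu\v{s}ka--Lax--Milgram route: the two inf-sup conditions the paper verifies for the form $\mathcal{F}(u,v)=\int_\Om u(-\Delta)^s v\,dx$ encode precisely the isomorphism $(-\Delta)_D^s\colon V\to L^2(\Om)$ that you exploit directly, and your treatment of parts (a) and (b) mirrors the paper's integration-by-parts argument step for step. The quantitative $L^2(\Omc)$-bound $\|\mathcal{N}_s v\|_{L^2(\Omc)}\lesssim \|v\|_{W^{s,2}_0(\overline\Om)}$ that you flag as a possible gap is in fact invoked without further elaboration in the paper's own proof (see \eqref{NOR-EST}), so it is not a defect specific to your argument, and your closed-graph (or direct kernel estimate) fix is a legitimate way to upgrade the qualitative statement of Lemma~\ref{lem:Nmap} into the needed norm estimate.
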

 
 \begin{proof}
  In order to show the existence of a very-weak solution we shall apply the 
  Babu\v{s}ka-Lax-Milgram theorem. 
  
 Firstly, let $(-\Delta)_D^s$ be the realization of $(-\Delta)^s$ in $L^2(\Omega)$ with the zero Dirichlet exterior condition $u=0$ in $\RR^N\setminus\Omega$. More precisely,
  \begin{align*}
  D((-\Delta)_D^s)=V\;\mbox{ and }\; (-\Delta)_D^su=(-\Delta)^su.
  \end{align*}
 Then a norm on $V$ is given by $\|v\|_V = \|(-\Delta)_D^s v\|_{L^2(\Om)}$ 
  which follows from the fact that the operator $(-\Delta)_D^s$ is invertible (since by \cite{SV2} $(-\Delta)_D^s$ has a compact resolvent and its first eigenvalue is strictly positive).
  Secondly, let $\mathcal F$ be the bilinear form defined on $L^2(\Omega)\times V$ by
  \begin{align*}
  \mathcal F(u,v):=\int_{\Om}  u (-\Delta)^s v\;dx.
  \end{align*}
  Then $\mathcal F$ is clearly bounded 
  on $L^2(\Om) \times V$. More precisely there is a constant $C>0$ such that
  \begin{align*}
 \left|\mathcal F(u,v)\right|\le \|u\|_{L^2(\Omega)}\|(-\Delta)^sv\|_{L^2(\Omega}\le C\|u\|_{L^2(\Omega)}\|v\|_{V}.
  \end{align*}
 Thirdly, we show the inf-sup conditions. From the definition
  of $V$, it immediately follows that 
  \begin{align*}
  v\in W_0^{s,2}(\bOm)\;\;\mbox{ and }\; (-\Delta)^s v \in L^2(\Om)  \quad 
   \Longleftrightarrow\quad v \in V . 
  \end{align*}
 By setting $u := \frac{(-\Delta)_D^s v}{\|(-\Delta)_D^sv\|_{L^2(\Om)}} \in L^2(\Om)$, we obtain that
  \begin{align*}
   \sup_{u \in L^2(\Om) , \|u\|_{L^2(\Om)}=1} |(u,(-\Delta)_D^sv)_{L^2(\Om)}|
    &\ge \frac{|((-\Delta)_D^sv,(-\Delta)_D^sv)_{L^2(\Om)}|}{\|(-\Delta)_D^sv\|_{L^2(\Om)}} \\
    &= \|(-\Delta)_D^sv\|_{L^2(\Om)} = \|v\|_V . 
  \end{align*}
  Next we choose $v \in V$ as the unique weak solution of $(-\Delta)_D^sv = u/\|u\|_{L^2(\Om)}$
  for some $0\ne u \in L^2(\Om)$. Then we readily obtain that 
  \[
   \sup_{v\in V, \|v\|_{V}=1} |(u,(-\Delta)^sv)_{L^2(\Om)}| 
    \ge \frac{|(u,u)_{L^2(\Om)}|}{\|u\|_{L^2(\Om)}} = \|u\|_{L^2(\Om)} >0, 
  \]
  for all $0\ne u\in L^2(\Omega)$.
Finally, we have to show that the right-hand-side in \eqref{eq:vw_d}
  defines a linear continuous functional on $V$. Indeed, applying the H\"older inequality in conjunction
  with Lemma~\ref{lem:Nmap} we obtain that there is a constant $C>0$ such that
  \begin{align}\label{NOR-EST}
   \left|\int_{\RR^N\setminus\Om} z \mathcal{N}_s v\;dx \right|
   \le \|z\|_{L^2(\RR^N\setminus\Om)} \|\mathcal{N}_s v\|_{L^2(\RR^N\setminus\Om)} 
   \le C\|z\|_{L^2(\RR^N\setminus\Om)} \|v\|_{W^{s,2}_0(\overline\Om)} , 
  \end{align}
  where in the last step we have used the fact that 
  $\|v\|_{W_0^{s,2}(\overline\Om)} = \|v\|_{W^{s,2}(\RR^N)}$ for 
  $v \in W_0^{s,2}(\overline\Om)$. Moreover 
  \[
   |\langle f,v\rangle| \le \|f\|_{W^{-s,2}(\overline\Om)} \|v\|_{W^{s,2}_0(\overline\Om)}. 
  \]  
  In view of the last two estimates, the right-hand-side in \eqref{eq:vw_d}
  defines a linear continuous functional on $V$. Therefore all the requirements of the Babu\v{s}ka-Lax-Milgram 
  theorem holds. Thus, there exists a unique $u \in L^2(\Om)$ satisfying \eqref{eq:vw_d}. Let $u=z$ in $\Omc$, then $u\in L^2(\RR^N)$ and satisfies \eqref{eq:vw_d}. We have shown the existence of the uniqueness of a very-weak solution.. 
  
Next we show the estimate \eqref{VWS_EST}. Let $u\in L^2(\RR^N)$ be a very-weak solution. Let $v\in V$ be a solution of $(-\Delta)_D^sv=u$. Taking this $v$ as a test function in \eqref{eq:vw_d} and using \eqref{NOR-EST}, we get that there is a constant $C>0$ such that
 \begin{align*}
 \|u\|_{L^2(\Omega)}^2\le& \|f\|_{W^{-s,2}(\overline\Om)}\|v\|_{W_0^{s,2}(\overline\Om)}+\|z\|_{L^2(\Omc)}\|\mathcal N_sv\|_{L^2(\Omc)}\\
 \le &C\left( \|f\|_{W^{-s,2}(\overline\Om)}+\|z\|_{L^2(\Omc)}\right)\|v\|_{W_0^{s,2}(\overline\Om)}\\
 \le &C\left( \|f\|_{W^{-s,2}(\overline\Om)}+\|z\|_{L^2(\Omc)}\right)\|(-\Delta)_D^sv\|_{L^2(\Om)}\\
 \le &C\left( \|f\|_{W^{-s,2}(\overline\Om)}+\|z\|_{L^2(\Omc)}\right)\|u\|_{L^2(\Om)},
 \end{align*}
and we have shown \eqref{VWS_EST}. This completes the proof of the first part.  

Next we prove the last two assertions of the theorem. Assume that  $z\in W^{s,2}(\Omc)$.
  
 (a) Let $u\in W^{s,2}(\RR^N)\hookrightarrow L^2(\RR^N)$ be a weak solution of \eqref{eq:Sd_1}. It follows from the definition that $u=z$ in $\RR^N\setminus\Omega$ and 
\begin{align}\label{e1}
\frac{C_{N,s}}{2}\int_{\RR^N}\int_{\RR^N}\frac{(u(x)-u(y))(v(x)-v(y))}{|x-y|^{N+2s}}\;dxdy=\langle f,v\rangle,
\end{align}
for every $v\in V$.
Since $v=0$ in $\RR^N\setminus\Omega$, we have that
\begin{align}\label{e2}
\int_{\RR^N}\int_{\RR^N}&\frac{(u(x)-u(y))(v(x)-v(y))}{|x-y|^{N+2s}}\;dxdy\notag\\
&=\int\int_{\RR^{2N}\setminus(\RR^N\setminus\Omega)^2}\frac{(u(x)-u(y))(v(x)-v(y))}{|x-y|^{N+2s}}\;dxdy.
\end{align}
Using \eqref{e1}, \eqref{e2}, the integration by parts formula \eqref{Int-Part} together with the fact that $u=z$ in $\RR^N\setminus\Omega$, we get that
\begin{align*}
\frac{C_{N,s}}{2}\int_{\RR^N}\int_{\RR^N}&\frac{(u(x)-u(y))(v(x)-v(y))}{|x-y|^{N+2s}}\;dxdy\\
&=\langle f,v\rangle\\
&=\int_{\Omega}u(-\Delta)^sv\;dx+\int_{\RR^N\setminus\Omega}u\mathcal N_sv\;dx\\
&=\int_{\Omega}u(-\Delta)^sv\;dx+\int_{\RR^N\setminus\Omega}z\mathcal N_sv\;dx.
\end{align*}
Thus $u$ is a very-weak solution of \eqref{eq:Sd_1}.

(b) Finally let $u$ be a very-weak solution of \eqref{eq:Sd_1} and assume that $u\in W^{s,2}(\RR^N)$. Since $u=z$ in $\RR^N\setminus\Omega$, we have that $z\in W^{s,2}(\RR^N\setminus\Omega)$ and if $\mathcal Z\in W^{s,2}(\RR^N)$ satisfies $\mathcal Z|_{\RR^N\setminus\Omega}=z$, then clearly $(u-\mathcal Z)\in W_0^{s,2}(\bOm)$. Since $u$ is a very-weak solution of \eqref{eq:Sd_1}, then by definition, for every $v\in V=D((-\Delta)_D^s)$, we have that
\begin{align}\label{e3}
\int_{\Omega}u(-\Delta)^sv\;dx=\langle f,v\rangle-\int_{\RR^N\setminus\Omega}z\mathcal N_sv\;dx.
\end{align}
 Since $u\in W^{s,2}(\RR^N)$ and $v=0$ in $\RR^N\setminus\Omega$, then using  \eqref{Int-Part} again we get that
\begin{align}\label{e4}
\int_{\RR^N}\int_{\RR^N}&\frac{(u(x)-u(y))(v(x)-v(y))}{|x-y|^{N+2s}}\;dxdy\notag\\
&=\int\int_{\RR^{2N}\setminus(\RR^N\setminus\Omega)^2}\frac{(u(x)-u(y))(v(x)-v(y))}{|x-y|^{N+2s}}\;dxdy\notag\\
&=\int_{\Omega}u(-\Delta)^sv\;dx+\int_{\RR^N\setminus\Omega}u\mathcal N_sv\;dx\notag\\
&=\int_{\Omega}u(-\Delta)^sv\;dx+\int_{\RR^N\setminus\Omega}z\mathcal N_sv\;dx. 
\end{align}
It follows from \eqref{e3} and \eqref{e4} that for every $v\in V$, we have that
\begin{align}\label{e5}
\int_{\RR^N}\int_{\RR^N}&\frac{(u(x)-u(y))(v(x)-v(y))}{|x-y|^{N+2s}}\;dxdy=\langle f,v\rangle.
\end{align}  
Since $V$ is dense in $W_0^{s,2}(\bOm)$, we have that \eqref{e5} remains true for every $v\in W_0^{s,2}(\bOm)$. We have shown that $u$ is a weak solution of \eqref{eq:Sd_1} and the proof is finished.     
 \end{proof}

\subsection{The Robin problem for the fractional Laplacian}\label{s:rbvp}

In order to study the Robin problem \eqref{eq:Sn} we consider the Sobolev space
 introduced in \cite{SDipierro_XRosOton_EValdinoci_2017a}. For $g\in L^1(\RR^N\setminus\Omega)$ fixed, we let

\begin{align*}
W_{\Omega,g}^{s,2}:=\Big\{u:\RR^N\to\RR\;\mbox{ measurable, }\,\|u\|_{W_{\Omega,g}^{s,2}}<\infty\Big\}, 
\end{align*}
where
\begin{align}\label{norm}
\|u\|_{W_{\Omega,g}^{s,2}}:=\left(\|u\|_{L^2(\Omega)}^2+\||g|^{\frac 12}u\|_{L^2(\RR^N\setminus\Omega)}^2+\int\int_{\RR^{2N}\setminus(\RR^N\setminus\Omega)^2}\frac{|u(x)-u(y)|^2}{|x-y|^{N+2s}}dxdy\right)^{\frac 12}.
\end{align}
 Let $\mu$ be the measure on $\RR^N\setminus\Omega$ given by $d\mu=|g|dx$.
With this setting, the norm in \eqref{norm} can be rewritten as 
\begin{align}\label{norm-e}
\|u\|_{W_{\Omega,g}^{s,2}}:=\left(\|u\|_{L^2(\Omega)}^2+\|u\|_{L^2(\RR^N\setminus\Omega,\mu)}^2+\int\int_{\RR^{2N}\setminus(\RR^N\setminus\Omega)^2}\frac{|u(x)-u(y)|^2}{|x-y|^{N+2s}}dxdy\right)^{\frac 12}.
\end{align}

If $g=0$, we shall let $W_{\Omega,0}^{s,2}=W_{\Omega}^{s,2}$.
The following result has been proved in \cite[Proposition 3.1]{SDipierro_XRosOton_EValdinoci_2017a}.

\begin{proposition}
Let $g\in L^1(\RR^N\setminus\Omega)$. Then $W_{\Omega,g}^{s,2}$ is a Hilbert space.
\end{proposition}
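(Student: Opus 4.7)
My plan is to exhibit an inner product inducing the norm \eqref{norm-e} and then verify completeness by passing Cauchy sequences through each of the three ``pieces'' of the norm, the main subtlety being to produce a single measurable function on $\RR^N\setminus\Omega$ that reconciles convergence in $L^2(\RR^N\setminus\Omega,\mu)$ (which sees only the support of $g$) with the Gagliardo-type piece (which sees all of $\RR^N\setminus\Omega$ through its interaction with $\Omega$).

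First I would introduce the bilinear form
\[
 (u,v)_{W_{\Omega,g}^{s,2}} := \int_\Omega u v\,dx + \int_{\RR^N\setminus\Omega} |g|\, u v\,dx + \int\int_{\RR^{2N}\setminus(\RR^N\setminus\Omega)^2} \frac{(u(x)-u(y))(v(x)-v(y))}{|x-y|^{N+2s}}\,dx\,dy,
\]
which is manifestly bilinear and symmetric, and whose associated quadratic form equals $\|u\|_{W_{\Omega,g}^{s,2}}^2$ from \eqref{norm}. Positive-definiteness follows because if $\|u\|_{W_{\Omega,g}^{s,2}}=0$ then $u=0$ a.e.\ in $\Omega$; substituting into the Gagliardo-type integral over $\Omega\times(\RR^N\setminus\Omega)$ gives $\int_{\RR^N\setminus\Omega}|u(y)|^2\bigl(\int_\Omega|x-y|^{-N-2s}dx\bigr)dy=0$, and since the inner integral is strictly positive for every $y\in\RR^N\setminus\Omega$, this forces $u=0$ a.e.\ in $\RR^N\setminus\Omega$.

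For completeness, let $(u_n)\subset W_{\Omega,g}^{s,2}$ be Cauchy. Trivially $(u_n|_\Omega)$ is Cauchy in $L^2(\Omega)$, hence converges to some $u^{(1)}\in L^2(\Omega)$, and $(u_n|_{\RR^N\setminus\Omega})$ is Cauchy in $L^2(\RR^N\setminus\Omega,\mu)$, hence converges to some $u^{(2)}$ there. The core task is to produce an honest measurable function $u$ on $\RR^N\setminus\Omega$ (i.e.\ defined on the full Lebesgue $\sigma$-algebra, not just modulo $\mu$) that can play the role of the limit. To do this I would fix any ball $B\Subset\Omega$ and any bounded open $K\subset \RR^N\setminus\overline\Omega$ with $\mathrm{dist}(B,K)>0$; on $B\times K$ the kernel $|x-y|^{-N-2s}$ is bounded above and below by positive constants depending only on $B,K$. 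Using $|u_n(y)-u_m(y)|^2\le 2|u_n(x)-u_m(x)|^2+2|(u_n(x)-u_n(y))-(u_m(x)-u_m(y))|^2$, integrating against the kernel over $B\times K$, and invoking the Cauchy property of both the $L^2(\Omega)$-piece and the Gagliardo piece, I obtain that $(u_n|_K)$ is Cauchy in $L^2(K)$. Exhausting $\RR^N\setminus\overline\Omega$ by such $K$ (and handling $\partial\Omega$ via a nullset) constructs a measurable limit $u^{(3)}$ on $\RR^N\setminus\overline\Omega$, with $u_n\to u^{(3)}$ in $L^2_{\mathrm{loc}}$ and, along a subsequence, a.e.; uniqueness of a.e.\ limits then gives $u^{(3)}=u^{(2)}$ $\mu$-a.e.

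Finally, define $u:=u^{(1)}$ on $\Omega$ and $u:=u^{(3)}$ on $\RR^N\setminus\Omega$. The Cauchy property in the Gagliardo piece means the functions $(x,y)\mapsto (u_n(x)-u_n(y))/|x-y|^{(N+2s)/2}$ form a Cauchy sequence in $L^2(\RR^{2N}\setminus(\RR^N\setminus\Omega)^2)$; a further subsequence converges a.e.\ on that set, and by the pointwise a.e.\ convergence $u_n\to u$ the limit must equal $(u(x)-u(y))/|x-y|^{(N+2s)/2}$. Consequently $u\in W_{\Omega,g}^{s,2}$ and $\|u_n-u\|_{W_{\Omega,g}^{s,2}}\to 0$ by adding the three $L^2$-convergences (with a final application of Fatou on the Gagliardo piece to control $\|u\|$ if needed). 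The only non-routine step is the patching in the previous paragraph, which is where the Gagliardo seminorm is genuinely used to give meaning to the limit on the portion of $\RR^N\setminus\Omega$ where $g$ vanishes.
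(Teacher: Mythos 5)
The paper does not give its own proof of this proposition; it is attributed to Dipierro, Ros--Oton and Valdinoci (the cited Proposition~3.1), so there is no argument in the text to compare against. On its own merits, your proof is correct and self-contained. The inner product and positive-definiteness parts are routine (for positivity, note that $\int_\Omega |x-y|^{-N-2s}\,dx$ may be $+\infty$ when $y$ is near $\partial\Omega$, but that only reinforces the conclusion $u=0$ a.e.\ on $\RR^N\setminus\Omega$). The genuinely non-trivial step is exactly where you flag it: converting Cauchyness of the Gagliardo piece plus Cauchyness in $L^2(\Omega)$ into Cauchyness of $(u_n)$ in $L^2_{\rm loc}(\RR^N\setminus\overline\Omega)$, by testing the cross term against a fixed ball $B\Subset\Omega$ where the kernel is comparable to a constant, so that a single measurable representative can be produced on all of $\RR^N\setminus\Omega$ and then reconciled $\mu$-a.e.\ with the $L^2(\RR^N\setminus\Omega,\mu)$-limit. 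Two minor points you leave implicit but should keep in mind: you must nest the various subsequences (the one giving a.e.\ convergence on $\Omega$, the diagonal one over the exhaustion of $\RR^N\setminus\overline\Omega$, the one for $\mu$-a.e.\ convergence, and the one for a.e.\ convergence of the difference quotients) so that a single subsequence does all jobs at once; and the step discarding $\partial\Omega$ as a Lebesgue nullset relies on the standing regularity assumption on $\Omega$ in this section. Neither is a gap.
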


Throughout the remainder of the article, for $g\in L^1(\Omc)$, we shall denote by $(W^{s,2}_{\Om,g})^\star$ the dual of $ W^{s,2}_{\Om,g}$. 

\begin{remark}\label{rem-35}
{\em We mention the following facts.
\begin{enumerate}
\item Recall that 
\begin{align*}
\RR^{2N}\setminus(\RR^N\setminus\Omega)^2=(\Omega\times\Omega)\cup (\Omega\times(\RR^N\setminus\Omega))\cup((\RR^N\setminus\Omega)\times\Omega),
\end{align*}
so that
\begin{align}\label{norm-just}
\int\int_{\RR^{2N}\setminus(\RR^N\setminus\Omega)^2}&\frac{|u(x)-u(y)|^2}{|x-y|^{N+2s}}dxdy=
\int_{\Omega}\int_{\Omega}\frac{|u(x)-u(y)|^2}{|x-y|^{N+2s}}dxdy\notag\\
&+\int_{\Omega}\int_{\RR^N\setminus\Omega}\frac{|u(x)-u(y)|^2}{|x-y|^{N+2s}}dxdy
+\int_{\RR^N\setminus\Omega}\int_{\Omega}\frac{|u(x)-u(y)|^2}{|x-y|^{N+2s}}dxdy.
\end{align}

\item If $g\in L^1(\Omc)$ and $u\in W_{\Omega,g}^{s,2}$, then using the H\"older inequality we get that
\begin{align}\label{E-g}
\left|\int_{\Omc}gu\;dx\right|\le& \int_{\Omc}|g|^{\frac 12}| |g^{\frac 12}| u|\;dx
\le \left(\int_{\Omc}|g|\;dx\right)^{\frac 12}\left(\int_{\Omc}|gu^2|\;dx\right)^{\frac 12}\notag\\
\le&\|g\|_{L^1(\Omc)}^{\frac 12}\|u\|_{L^2(\Omc,\mu)}\le \|g\|_{L^1(\Omc)}^{\frac 12}\|u\|_{W_{\Omega,g}^{s,2}}.
\end{align}
It follows from \eqref{E-g} that in particular, $L^1(\Omc,\mu)\hookrightarrow (W_{\Omega,g}^{s,2})^\star$.

\item By definition (using also \eqref{norm-just}),  $W_{\Om,g}^{s,2}\hookrightarrow W_{\Om}^{s,2} \hookrightarrow W^{s,2}(\Omega)$, so that we have the following continuous embedding
\begin{align}\label{sobo-g}
W_{\Om,g}^{s,2}\hookrightarrow W_{\Om}^{s,2} \hookrightarrow L^{\frac{2N}{N-2s}}(\Omega).
\end{align}
It follows from \eqref{sobo-g} that the embedding $W_{\Om,g}^{s,2}\hookrightarrow L^{2}(\Omega)$ and $W_{\Om}^{s,2}\hookrightarrow L^{2}(\Omega)$ are  compact.
\end{enumerate}
}
\end{remark}

We consider a generalized version of the  system \eqref{eq:Sn} with nonzero right-hand-side $f$. 
Throughout the following section, the measure $\mu$ is defined with $g$ replaced by $\kappa$. That is, $d\mu=\kappa dx$ (recall that $\kappa$ is assumed to be non-negative).
Here is our notion of weak solutions. 

 \begin{definition}\label{def:weak_n}
  Let  $z \in L^2(\RR^N\setminus\Om,\mu)$ and $f\in ( W_{\Om,\kappa}^{s,2})^\star$.
 A $u\in W_{\Om,\kappa}^{s,2}$ is said to be a weak solution of \eqref{eq:Sn} if the identity
\begin{align}\label{we-so}
\int\int_{\RR^{2N}\setminus(\Omc)^2}&\frac{(u(x)-u(y))(v(x)-v(y))}{|x-y|^{N+2s}}\;dxdy+\int_{\Omc}\kappa uv\;dx\notag\\
&=\langle f,v\rangle_{ ( W_{\Om,\kappa}^{s,2})^\star, W_{\Om,\kappa}^{s,2}}+\int_{\Omc}\kappa zv\;dx,
\end{align}
holds for every $v\in  W_{\Om,\kappa}^{s,2}$.
 \end{definition}

We have the following existence result.

\begin{proposition}\label{pro-sol-Ro}
Let $\kappa\in L^1(\Omc)\cap L^\infty(\Omc)$.
Then for every $z\in L^2(\RR^N\setminus\Omega,\mu)$ and $f\in ( W_{\Om,\kappa}^{s,2})^\star$, there exists a  weak solution $u\in W_{\Om,\kappa}^{s,2}$ of \eqref{eq:Sn}. 
\end{proposition}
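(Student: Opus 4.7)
The natural plan is to apply the Lax--Milgram theorem on the Hilbert space $W_{\Om,\kappa}^{s,2}$ to the bilinear form
\begin{align*}
a(u,v) := \int\int_{\RR^{2N}\setminus(\Omc)^2}\frac{(u(x)-u(y))(v(x)-v(y))}{|x-y|^{N+2s}}\,dxdy + \int_{\Omc}\kappa uv\,dx
\end{align*}
and the linear functional $L(v) := \langle f,v\rangle_{(W_{\Om,\kappa}^{s,2})^\star,W_{\Om,\kappa}^{s,2}} + \int_{\Omc}\kappa z v\,dx$, so that \eqref{we-so} becomes $a(u,v) = L(v)$ for all $v \in W_{\Om,\kappa}^{s,2}$.

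Continuity of $a$ is immediate from Cauchy--Schwarz, since the two contributions defining $a$ are exactly (pieces of) the squared norm \eqref{norm-e}. For $L$, the first term is controlled by $\|f\|_{(W_{\Om,\kappa}^{s,2})^\star}\|v\|_{W_{\Om,\kappa}^{s,2}}$ by duality, and for the second I would write $\int_{\Omc}\kappa zv\,dx = \int_{\Omc}(\kappa^{1/2}z)(\kappa^{1/2}v)\,dx$ and apply Cauchy--Schwarz to obtain the bound $\|z\|_{L^2(\Omc,\mu)}\|v\|_{L^2(\Omc,\mu)} \le \|z\|_{L^2(\Omc,\mu)}\|v\|_{W_{\Om,\kappa}^{s,2}}$, using the definition \eqref{norm-e} of the norm.

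The main obstacle is coercivity: one must show there exists $c>0$ such that
\begin{align*}
a(u,u) \ge c\left(\|u\|_{L^2(\Om)}^2 + \|u\|_{L^2(\Omc,\mu)}^2 + \int\int_{\RR^{2N}\setminus(\Omc)^2}\frac{|u(x)-u(y)|^2}{|x-y|^{N+2s}}\,dxdy\right).
\end{align*}
The form $a(u,u)$ already controls the Gagliardo piece and the $L^2(\Omc,\mu)$ piece; the issue is to control $\|u\|_{L^2(\Om)}^2$ by these two quantities. I would do this by contradiction and compactness: assume there is a sequence $u_n$ with $\|u_n\|_{W_{\Om,\kappa}^{s,2}}=1$ and $a(u_n,u_n)\to 0$. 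By the compact embedding $W_{\Om,\kappa}^{s,2}\hookrightarrow L^2(\Om)$ (Remark \ref{rem-35}(3)), a subsequence converges in $L^2(\Om)$ and a.e.\ to some $u$. The interior Gagliardo piece vanishes in the limit, so $u$ is constant on $\Om$; the interaction integral over $\Om\times\Omc$ vanishing together with $\int_{\Omc}\kappa u_n^2\,dx\to 0$ then force (after passing to a further a.e.\ convergent subsequence on $\Omc$) that the common constant value is $0$ on the set $\{\kappa>0\}$, hence $u\equiv 0$ on $\Om$. This contradicts $\|u_n\|_{L^2(\Om)}\to 1$, which must hold since the other two pieces of the norm tend to zero by assumption.

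Once coercivity is established, the Lax--Milgram theorem produces a unique $u\in W_{\Om,\kappa}^{s,2}$ satisfying \eqref{we-so}, yielding the desired weak solution. The delicate point, as noted, is the Poincaré-type estimate underlying coercivity; it depends on $\kappa$ being non-trivial on a set of positive measure in $\Omc$, which I would record as a standing hypothesis (otherwise the problem degenerates to a Neumann-type problem with the usual compatibility obstruction).
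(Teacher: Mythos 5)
Your route is genuinely different from the paper's. The paper's proof identifies $\mathcal E$ as a closed, symmetric, non-negative bilinear form on $L^2(\Omega)$ (the form norm $\mathcal E(u,u)+\|u\|_{L^2(\Om)}^2$ is exactly $\|u\|_{W_{\Om,\kappa}^{s,2}}^2$) and then invokes abstract form theory to conclude existence; it never checks coercivity or any compatibility condition. You instead prove coercivity of $a$ directly on $W_{\Om,\kappa}^{s,2}$ by a compactness/contradiction argument and then apply Lax--Milgram. That compactness argument is precisely what the paper does \emph{later}, in Lemma~\ref{le62}, to establish the norm equivalence under Assumption~\ref{asum}, so you have in effect anticipated a technique the authors postpone.

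You have also put your finger on a real gap in the statement as given. If $\kappa\equiv 0$ then constants lie in the kernel of $a$, coercivity fails, and existence itself can fail (test \eqref{we-so} with $v\equiv 1$: the left side vanishes while the right side is $\langle f,1\rangle$, so no solution exists unless $\langle f,1\rangle=0$). The paper's hypothesis $\kappa\in L^1\cap L^\infty$ permits $\kappa\equiv 0$, and its Remark~\ref{rem:nonunique} only acknowledges the loss of \emph{uniqueness} in that case; the closed-form argument as written does not deliver unconditional existence either. Your added standing hypothesis that $\kappa>0$ on a set of positive measure repairs this, and with it Lax--Milgram yields both existence and uniqueness, which is stronger than what the proposition states. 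One cosmetic point in the coercivity step: from $\iint_{\Omega\times\Omega}|u_n(x)-u_n(y)|^2|x-y|^{-N-2s}\,dxdy\to 0$ you should pass to the limit function via Fatou's lemma along the a.e.\ convergent subsequence (showing the Gagliardo seminorm of $u$ on $\Omega$ is zero, hence $u$ constant), rather than appealing to the vanishing of the $u_n$-seminorms directly; the paper's Lemma~\ref{le62} makes this rigorous via the a.e.\ subsequence extraction in \eqref{lim} and \eqref{MaWa}. Likewise the final contradiction is with $\|u\|_{L^2(\Omega)}=1$, obtained from the strong $L^2(\Omega)$ convergence.
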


\begin{proof}
Let  $\mathcal E$ with domain $D(\mathcal E)=W_{\Om,\kappa}^{s,2}$ be given by
\begin{align}\label{form-F}
\mathcal E(u,v):=\int\int_{\RR^{2N}\setminus(\Omc)^2}\frac{(u(x)-u(y))(v(x)-v(y))}{|x-y|^{N+2s}}\;dxdy
+\int_{\Omc}\kappa uv\;dx.
\end{align}
Then $\mathcal E$ is a bilinear, symmetric, continuous and closed form on $L^2(\Omega)$.  Hence, for every $z\in L^2(\Omc,\mu)\subset (W_{\Om,\kappa}^{s,2})^\star$ and $f\in (W_{\Om,\kappa}^{s,2})^\star$, there is a function $u\in W_{\Om,\kappa}^{s,2}$ such that 
\begin{align*}
\mathcal E(u,v)=&\langle f,v\rangle_{(W_{\Om,\kappa}^{s,2})^\star,W_{\Om,\kappa}^{s,2}}+\langle z,v\rangle_{(W_{\Om,\kappa}^{s,2})^\star,W_{\Om,\kappa}^{s,2}}\\
=&\langle f,v\rangle_{(W_{\Om,\kappa}^{s,2})^\star,W_{\Om,\kappa}^{s,2}}+\int_{\Omc}\kappa zv\;dx, 
\end{align*}
for every $v\in W_{\Om,\kappa}^{s,2}$. That is, $u$ satisfies \eqref{we-so}. Thus $u$ is a weak solution of \eqref{eq:Sn}. 
The proof is finished.
\end{proof}
\begin{remark}\label{rem:nonunique}
{\rm Notice that similarly to the classical Neumann problem when $\kappa \equiv 0$, Proposition~\ref{pro-sol-Ro} only guarantees uniqueness of solutions to \eqref{eq:Sn} up to a constant. In case
we assume $\kappa$ to be strictly positive, uniqueness can be guaranteed under Assumption~\ref{asum} below. In that case we can also show that there is a constant $C>0$ such that
\begin{align}\label{Est-sol-RB}
\|u\|_{W_{\Om,\kappa}^{s,2}}\le C\left(\|f\|_{ ( W_{\Om,\kappa}^{s,2})^\star}+\|z\|_{L^2(\Omc,\mu)}\right).
\end{align}
}
\end{remark}

\section{Fractional Dirichlet boundary control problem}\label{s:dcp}

We begin by introducing the appropriate function spaces needed to study \eqref{eq:dcp}. 
We let 
 \[
    Z_D := L^2(\RR^N\setminus\Om) , \quad U_D := L^2(\Om)  .
 \]
In view of Theorem~\ref{thm:vwdexist} the following (solution-map) control-to-state
map 
 \begin{align*}
    S : Z_D \rightarrow U_D,\;\;z \mapsto S z = u ,
 \end{align*}
is well-defined, linear and continuous. We also notice that for $z\in Z_D$, we have that $u:=Sz\in L^2(\RR^N)$.
As a result we can write the \emph{reduced fractional
Dirichlet exterior control problem} as follows:

 \begin{equation}\label{eq:rpd}
    \min_{z\in Z_{ad,D}} \mathcal{J}(z) 
       := J(Sz) + \frac{\xi}{2} \|z\|_{Z_D}^2 . 
 \end{equation}
We then have the following well-posedness result for \eqref{eq:rpd} and equivalently
\eqref{eq:dcp}. 

 \begin{theorem}\label{thm:exit_dcp}
  Let $Z_{ad,D}$ be a closed and convex subset of $Z_D$. 
  Let either $\xi > 0$ or $Z_{ad,D}$ be bounded and 
  let $J : U_D \rightarrow \mathbb{R}$ be weakly lower-semicontinuous. Then  
  there exists a solution $\bar{z}$ to \eqref{eq:rpd} and equivalently to 
  \eqref{eq:dcp}. If either $J$ is convex and $\xi > 0$ or $J$ is strictly
  convex and $\xi \ge 0$, then $\bar{z}$ is unique. 
 \end{theorem}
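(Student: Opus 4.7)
The plan is to apply the standard direct method of the calculus of variations, exploiting the reflexivity (indeed Hilbert structure) of $Z_D = L^2(\RR^N\setminus\Om)$ together with the linearity and continuity of the control-to-state map $S$ established via Theorem~\ref{thm:vwdexist}.

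First I would observe that $\mathcal{J}$ is bounded below on $Z_{ad,D}$ (either trivially when $Z_{ad,D}$ is bounded, or by nonnegativity of both terms when $\xi>0$), so we can pick a minimizing sequence $\{z_n\} \subset Z_{ad,D}$ with $\mathcal J(z_n)\to \inf_{z\in Z_{ad,D}}\mathcal J(z)$. The next step is coercivity: if $\xi>0$, then from
\[
\frac{\xi}{2}\|z_n\|_{Z_D}^2 \le \mathcal J(z_n) - J(Sz_n) \le \mathcal J(z_n) + \sup_n(-J(Sz_n)),
\]
combined with the fact that along a minimizing sequence the left-hand side above is forced to remain bounded (using that $J$ is bounded below on the image of a suitable bounded set, a consequence of weak lower-semicontinuity restricted to a minimizing sequence), we obtain boundedness of $\{z_n\}$ in $Z_D$; if instead $Z_{ad,D}$ is bounded, boundedness is immediate. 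Thus, by reflexivity of the Hilbert space $Z_D$, a subsequence (still denoted $\{z_n\}$) converges weakly to some $\bar z\in Z_D$.

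Then I would argue that $\bar z \in Z_{ad,D}$: since $Z_{ad,D}$ is convex and strongly closed, it is weakly closed by Mazur's theorem, so the weak limit $\bar z$ lies in $Z_{ad,D}$. Next, the solution map $S: Z_D\to U_D$ from Theorem~\ref{thm:vwdexist} is linear and continuous, hence weakly continuous, so $Sz_n \rightharpoonup S\bar z$ in $U_D = L^2(\Om)$. By the weak lower-semicontinuity of $J$ and of the squared norm $\|\cdot\|_{Z_D}^2$ (a standard fact in Hilbert spaces), we obtain
\[
\mathcal J(\bar z) = J(S\bar z) + \frac{\xi}{2}\|\bar z\|_{Z_D}^2 \le \liminf_{n\to\infty} \left( J(Sz_n) + \frac{\xi}{2}\|z_n\|_{Z_D}^2\right) = \inf_{z\in Z_{ad,D}}\mathcal J(z),
\]
which shows that $\bar z$ is a minimizer.

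For uniqueness, I would use a standard convexity argument. If $J$ is convex and $\xi>0$, then $z\mapsto \frac{\xi}{2}\|z\|_{Z_D}^2$ is strictly convex and $z\mapsto J(Sz)$ is convex (since $S$ is affine-linear and $J$ is convex), whence $\mathcal J$ is strictly convex and the minimizer is unique. If $J$ is strictly convex and $\xi\ge 0$, then because $S$ is injective (this follows from Theorem~\ref{thm:vwdexist}: the very-weak solution with $f=0$ and $z=0$ vanishes, so two different controls must yield two different states), $z\mapsto J(Sz)$ is strictly convex and again $\mathcal J$ is strictly convex. The main subtlety is verifying coercivity in the $\xi>0$ case without assuming $J\ge 0$; this is handled by noting that along a minimizing sequence the values $\mathcal J(z_n)$ are bounded above, and $J(Sz_n)$ is bounded below because $\{Sz_n\}$ would have to diverge in norm for $J\circ S$ to tend to $-\infty$, contradicting the $\xi\|z_n\|^2$ term dominating.
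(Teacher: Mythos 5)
Your proof follows essentially the same direct-method (Weierstrass) strategy as the paper: reflexivity of $Z_D$, weak closedness of $Z_{ad,D}$ by Mazur, weak continuity of the linear bounded map $S$, weak lower semicontinuity of $\mathcal{J}$, and strict convexity for uniqueness. Omitting the explicit passage to the limit in the very-weak formulation (which the paper carries out to verify that $(\bar u,\bar z)$ solves the state equation) is harmless, since $S\bar z$ is by definition the state of $\bar z$ and weak continuity of $S$ already identifies the limit.

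Two points deserve scrutiny, however. First, the coercivity argument you sketch for the $\xi>0$ case is circular: you want to deduce that $J(Sz_n)$ is bounded below from boundedness of $\{Sz_n\}$, but that would require boundedness of $\{z_n\}$, which is the very thing you are trying to prove. The clean way to close this (and what the paper implicitly does) is to assume $J$ is bounded below, e.g.\ $J\ge 0$ for a tracking-type functional, so the $\frac{\xi}{2}\|z\|^2$ term is directly coercive. Second, you are right that strict convexity of $z\mapsto J(Sz)$ when $J$ is strictly convex and $\xi=0$ requires $S$ to be injective, a subtlety the paper glosses over; but your justification of injectivity is wrong. The observation that the very-weak solution with $f=0,z=0$ vanishes is just $S(0)=0$, which holds trivially for any linear map and does not give $Sz=0\Rightarrow z=0$. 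Since $S$ maps $z$ to $u|_\Omega\in L^2(\Omega)$, injectivity would amount to the statement that $u|_\Omega=0$ together with $(-\Delta)^s u=0$ in $\Omega$ forces $u|_{\Omc}=z=0$; that is a unique-continuation-type property which requires a genuine separate argument (e.g.\ density of $\{\mathcal{N}_s v:\ v\in V\}$ in $L^2(\Omc)$) and cannot be read off from the well-posedness estimate \eqref{VWS_EST}.
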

 
 \begin{proof}
  The proof uses the so-called direct-method or the Weierstrass theorem   
  \cite[Theorem~3.2.1]{HAttouch_GButtazzo_GMichaille_2014a}. We notice for  
  $\mathcal{J} : Z_{ad,D} \rightarrow \RR$, it is always possible to construct a 
  minimizing sequence $\{z_n\}_{n\in\NN}$ (cf. \cite[Theorem~3.2.1] {HAttouch_GButtazzo_GMichaille_2014a} for a construction) such that 
  \[
    \inf_{z\in Z_{ad,D}} \mathcal{J}(z) = \lim_{n\rightarrow\infty} \mathcal{J}(z_n) .
  \]
  If $\xi>0$ or $Z_{ad,D} \subset Z_D$ is bounded then $\{z_n\}_{n\in\NN}$ is a bounded 
  sequence in $Z_D$ which is a Hilbert space. Due to the reflexivity of $Z_D$ 
  we have that (up to a subsequence if necessary) $z_n\rightharpoonup\bar{z}$ (weak convergence) in 
  $Z_D$ as $n\to\infty$. Since $Z_{ad,D}$ is closed and convex, hence is weakly closed, 
  we have that $\bar{z} \in Z_{ad,D}$. 
  
  Since $S : Z_{ad,D} \rightarrow U_D$ is linear and continuous, we have that 
  it is weakly continuous. This implies that  $Sz_n \rightharpoonup S\bar{z}$ in $U_D$ as $n\to\infty$. 
  We have to show that $(S\bar{z},\bar{z})$ 
  fulfills the state equation according to Definition~\ref{def:vweak_d}. In particular 
  we need to study the identity 
  \begin{equation}\label{eq:exit_d_1}
      \int_{\Om} 
         u_n (-\Delta)^s v\;dx
         =- \int_{\RR^N\setminus\Om} z_n \mathcal{N}_s v\;dx , \quad 
                 \forall v \in V ,
  \end{equation}   
  as $n\rightarrow\infty$, where $u_n := Sz_n$. Since $u_n \rightharpoonup S\bar{z} =: \bar{u}$
  in $U_D$ as $n\rightarrow\infty$  and $z_n\rightharpoonup \bar{z}$ in $Z_D$
  as $n\rightarrow\infty$, we can immediately take the limit in \eqref{eq:exit_d_1} and 
  obtain that $(\bar{u},\bar{z}) \in U_D \times Z_{ad,D}$ fulfills the state equation in 
  the sense of Definition~\ref{def:vweak_d}. 
  
  It then remains to show that $\bar{z}$ is the minimizer of \eqref{eq:rpd}. This is a 
  consequence of the fact that $\mathcal{J}$ is weakly lower semicontinuous. Indeed, 
  $\mathcal{J}$ is the sum of two weakly lower semicontinuous functions ($\|\cdot\|^2_{Z_D}$ is 
  continuous and convex therefore weakly lower semicontinuous). 
  
  Finally, if $\xi > 0$ and $J$ is convex then $\mathcal{J}$ is strictly convex 
  (sum of a strictly convex and convex functions). On the other hand, if 
  $J$ is strictly convex then $\mathcal{J}$ is strictly convex. 
  In either case we have that $\mathcal{J}$ is strictly convex and thus the uniqueness
  of $\bar{z}$ follows. 
 \end{proof}

We will next derive the first order necessary optimality conditions for \eqref{eq:rpd}. 
We begin by identifying the structure of the adjoint operator $S^*$. 

 \begin{lemma}\label{lem:adjS_d}
  For the state equation \eqref{eq:Sd} the adjoint operator 
  $S^* : U_D \rightarrow Z_D$ is given by 
  \[
   S^* w =- \mathcal{N}_s p \in Z_D,
  \]
  where $w \in U_D$ and $p \in W_0^{s,2}(\overline\Om)$ is the weak solution 
  to the problem
  \begin{equation}\label{eq:adj_state_d}
  \begin{cases} 
   (-\Delta)^s p &= w \quad \mbox{ in } \Om \\
               p &= 0 \quad \mbox{ in } \RR^N \setminus \Om .
  \end{cases} 
  \end{equation}
 \end{lemma}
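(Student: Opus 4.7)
The plan is to compute $(Sz, w)_{L^2(\Omega)}$ directly by choosing a suitable test function in the very-weak formulation of the state equation. The candidate is $p$ itself: if $p$ solves \eqref{eq:adj_state_d}, then by Proposition~\ref{prop:weak_Dir} (applied with datum $w\in L^2(\Omega)\subset W^{-s,2}(\overline\Om)$ and zero exterior data) $p$ exists uniquely in $W_0^{s,2}(\overline\Om)$, and moreover $(-\Delta)^s p=w\in L^2(\Omega)$, so in fact $p\in V$. This means $p$ is a legitimate test function in Definition~\ref{def:vweak_d}.

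Next I would take $v=p$ in \eqref{eq:vw_d} with $f=0$ (corresponding to $u=Sz$) to obtain
\begin{align*}
\int_\Omega (Sz)\,(-\Delta)^s p\;dx=-\int_{\RR^N\setminus\Omega}z\,\mathcal N_s p\;dx.
\end{align*}
Substituting $(-\Delta)^s p=w$ on the left gives
\begin{align*}
(Sz,w)_{L^2(\Omega)}=\bigl(z,-\mathcal N_s p\bigr)_{L^2(\RR^N\setminus\Omega)},
\end{align*}
which, by definition of the Hilbert-space adjoint between $Z_D=L^2(\Omc)$ and $U_D=L^2(\Omega)$, identifies $S^\star w=-\mathcal N_s p$.

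To close the argument I need to check that the formula is well-posed, i.e.\ that $-\mathcal N_s p$ actually lies in $Z_D$. Since $p\in W_0^{s,2}(\overline\Om)\hookrightarrow W^{s,2}(\RR^N)$, Lemma~\ref{lem:Nmap} yields $\mathcal N_s p\in L^2(\RR^N\setminus\Omega)=Z_D$, and in fact provides the continuity estimate $\|\mathcal N_s p\|_{L^2(\Omc)}\le C\|p\|_{W^{s,2}(\RR^N)}\le C'\|w\|_{L^2(\Omega)}$, where the last bound comes from the continuous dependence \eqref{Es-DS} applied to $p$. This confirms that $S^\star:U_D\to Z_D$ is the continuous linear operator $w\mapsto -\mathcal N_s p$.

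The only genuinely delicate point is the verification that $p\in V$ so that it qualifies as a test function in the very-weak formulation; this is immediate here because of the improved regularity $(-\Delta)^s p=w\in L^2(\Omega)$ coming from the choice of right-hand side. Everything else is a one-line application of the very-weak identity together with Lemma~\ref{lem:Nmap}.
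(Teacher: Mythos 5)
Your proof is correct and follows essentially the same route as the paper's: both arguments amount to observing that $p=((-\Delta)_D^s)^{-1}w$ lies in $V$ and is therefore an admissible test function in the very-weak formulation \eqref{eq:vw_d} for $u=Sz$ with $f=0$, which immediately gives $(w,Sz)_{L^2(\Om)}=((-\Delta)^s p,Sz)_{L^2(\Om)}=-(z,\mathcal N_s p)_{L^2(\Omc)}$. You are somewhat more explicit than the paper about the two supporting facts (that $p\in V$ so the test function is legitimate, and that $\mathcal N_s p\in L^2(\Omc)$ via Lemma~\ref{lem:Nmap} together with \eqref{Es-DS}), but the underlying argument is the same.
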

 
 \begin{proof}
  According to the definition of $S^*$ we have that for every $w \in U_D$ and 
  $z \in Z_D$,
  \[
   (w,Sz)_{L^2(\Om)} = (S^*w,z)_{L^2(\RR^N\setminus\Om)} . 
  \] 
  Next, testing the adjoint equation \eqref{eq:adj_state_d} with $Sz$ and using the fact that $Sz$ is a very-weak solution of \eqref{eq:Sd_1} with $f=0$, we arrive at 
  \begin{align*}
   (w,Sz)_{L^2(\Om)} = ( Sz, (-\Delta)^s p )_{L^2(\Om)} 
     =  -(z,\mathcal{N}_s p)_{L^2(\RR^N\setminus\Om)} = (z,S^*w)_{L^2(\RR^N\setminus\Om)} .
  \end{align*}
  This yields the asserted result. 
 \end{proof}

For the remainder of this section we will assume that $\xi > 0$.

 \begin{theorem}\label{thm:focD}
  Let the assumptions of Theorem~\ref{thm:exit_dcp} hold. Let $\mathcal{Z}$ be an open 
  set in $Z_D$ such that $Z_{ad,D} \subset \mathcal{Z}$. Let 
  $u \mapsto J(u) : U_D \rightarrow \RR$ be continuously Fr\'echet differentiable with 
  $J'(u) \in U_D$. If $\bar{z}$ is a minimizer of \eqref{eq:rpd}
  over $Z_{ad,D}$, then the first order necessary optimality conditions are given by 
  \begin{equation}\label{eq:foc_dcp}
   \left( -\mathcal{N}_s\bar{p}+\xi\bar{z}, z-\bar{z}\right)_{L^2(\RR^N\setminus\Om)} \ge 0 , \quad z \in Z_{ad,D} 
  \end{equation}
  where $\bar{p} \in W_0^{s,2}(\overline\Om)$ solves the adjoint equation 
  \begin{equation}\label{eq:adj_state_dp}
  \begin{cases} 
   (-\Delta)^s \bar{p} &= J'(\bar{u}) \quad \mbox{ in } \Om \\
               \bar{p} &= 0 \quad \mbox{ in } \RR^N \setminus \Om .
  \end{cases} 
  \end{equation}
  Equivalently we can write \eqref{eq:foc_dcp} as 
  \begin{equation}\label{eq:projf_d}
   \bar{z} = \mathcal{P}_{Z_{ad}}\left(\frac{1}{\xi}\mathcal{N}_s\bar{p}\right) , 
  \end{equation}
  where $\mathcal{P}_{Z_{ad}}$ is the projection onto 
  the set $Z_{ad}$. If $J$ is convex then \eqref{eq:foc_dcp} is a sufficient condition. 
 \end{theorem}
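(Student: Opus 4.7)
The plan is to follow the standard Lagrangian/adjoint route, using convexity of $Z_{ad,D}$ to reduce optimality to a variational inequality and then invoking Lemma~\ref{lem:adjS_d} to rewrite the derivative in terms of the nonlocal normal derivative of an adjoint state. First I would use the fact that $Z_{ad,D}$ is convex: for any $z\in Z_{ad,D}$ and $t\in(0,1]$ we have $\bar z+t(z-\bar z)\in Z_{ad,D}$, so by minimality $\mathcal J(\bar z+t(z-\bar z))\ge\mathcal J(\bar z)$. Dividing by $t$, letting $t\downarrow 0$, and noting that $\mathcal J$ is Fr\'echet differentiable on the open set $\mathcal Z$ (as the composition $J\circ S$ with $S:Z_D\to U_D$ linear and continuous by Theorem~\ref{thm:vwdexist}, plus the smooth squared-norm term) yields the variational inequality $\langle \mathcal J'(\bar z),z-\bar z\rangle_{Z_D}\ge 0$ for all $z\in Z_{ad,D}$.

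Next I would compute $\mathcal J'(\bar z)$ via the chain rule. Since $\mathcal J(z)=J(Sz)+\tfrac{\xi}{2}\|z\|_{Z_D}^2$, we obtain $\mathcal J'(\bar z)=S^\star J'(S\bar z)+\xi\bar z$ in $Z_D$, where we use the Riesz identification to view $J'(\bar u)\in U_D$. The key substitution comes from Lemma~\ref{lem:adjS_d}: applied to $w=J'(\bar u)\in U_D$, it gives $S^\star J'(\bar u)=-\mathcal N_s\bar p$, where $\bar p\in W_0^{s,2}(\bOm)$ is the unique weak solution of \eqref{eq:adj_state_dp} (existence and uniqueness follow from Proposition~\ref{prop:weak_Dir} applied with datum $J'(\bar u)\in L^2(\Omega)\hookrightarrow W^{-s,2}(\bOm)$ and zero exterior condition). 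Substituting into the variational inequality produces \eqref{eq:foc_dcp}.

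To obtain the projection formula \eqref{eq:projf_d}, I would rewrite \eqref{eq:foc_dcp} as
\begin{equation*}
\bigl(\bar z-\tfrac{1}{\xi}\mathcal N_s\bar p,\,z-\bar z\bigr)_{L^2(\Omc)}\ge 0\quad\forall z\in Z_{ad,D},
\end{equation*}
and then invoke the standard characterization of the metric projection onto a closed convex subset of a Hilbert space: $\bar z=\mathcal P_{Z_{ad,D}}(\tfrac{1}{\xi}\mathcal N_s\bar p)$ is equivalent to exactly this inequality. Finally, sufficiency under convex $J$: if $J$ is convex, then $z\mapsto J(Sz)$ is convex (composition of convex with affine) and the squared norm is convex, so $\mathcal J$ is convex; any $\bar z\in Z_{ad,D}$ satisfying the variational inequality is then a global minimizer by the usual first-order characterization of convex minimization.

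The main obstacle, in my view, is the clean use of Lemma~\ref{lem:adjS_d} together with the very-weak formulation: one must be careful that $J'(\bar u)\in L^2(\Omega)$ so that the adjoint state $\bar p$ sits in the domain $V$ where the very-weak identity \eqref{eq:vw_d} can be tested against $v=\bar p$, making the computation of $S^\star$ rigorous. This was already handled in the proof of Lemma~\ref{lem:adjS_d}, so the remainder of the argument is largely bookkeeping; no new delicate estimates should be required.
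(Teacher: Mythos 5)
Your proposal is correct and follows essentially the same route as the paper's proof: derive the variational inequality from minimality and convexity of $Z_{ad,D}$, compute $\mathcal J'(\bar z)=S^\star J'(S\bar z)+\xi\bar z$ by the chain rule, substitute $S^\star J'(\bar u)=-\mathcal N_s\bar p$ via Lemma~\ref{lem:adjS_d}, and obtain \eqref{eq:projf_d} from the standard characterization of the metric projection. The paper states this more tersely but the steps are the same; your added care about $J'(\bar u)\in L^2(\Omega)$ and the applicability of Lemma~\ref{lem:adjS_d} matches the paper's remark that $J'(S\bar z)\in\mathcal L(L^2(\Om),\RR)=L^2(\Om)$.
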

 
 \begin{proof}
  The proof is a straightforward application of differentiability properties of $J$  
  and chain rule in conjunction with Lemma~\ref{lem:adjS_d}. Indeed for a given direction
  $h \in Z_{ad,D}$ we have that the directional derivative of $\mathcal{J}$ is given by 
  
  \begin{align*}
   \mathcal{J}'(\bar{z}) h =& (J'(S\bar{z}),Sh)_{L^2(\Om)}  + \xi (\bar{z},h)_{L^2(\RR^N\setminus\Om)} \\
  = &(S^*J'(S\bar{z}),h)_{L^2(\Om)} + \xi (\bar{z},h)_{L^2(\RR^N\setminus\Om)}, 
 \end{align*}
  where in the first step we have used that $J'(S\bar{z}) \in \mathcal{L}(L^2(\Om),\RR) 
  = L^2(\Om)$ and in the second step we have used that $S$ is linear and bounded therefore 
  $S^*$ is well-defined. Then using Lemma~\ref{lem:adjS_d} we arrive at the asserted result. 
   
  From Lemma~\ref{lem:Nmap} we recall that $\mathcal{N}_s\bar{p} \in L^2(\RR^N\setminus\Om)$.
  Therefore the equivalence between \eqref{eq:foc_dcp} and \eqref{eq:projf_d} follows by
  using \cite[Theorem~3.3.5]{HAttouch_GButtazzo_GMichaille_2014a}.
 \end{proof}

 \begin{remark}[\bf Regularity for optimization variables]\label{rem:ocdreg}
  \rm{
   We recall a rather surprising result for the adjoint equation 
   \eqref{eq:adj_state_d}. The standard shift argument that is known to hold for the classical 
   Laplacian on smooth open sets does not hold in the case of the fractional Laplacian i.e.,
   $p$ does not always belong to $W^{2s,2}(\Om)$. More precisely assume that $\Omega$ is a smooth bounded open set.  If $0<s<\frac 12$, then by \cite[Formula (7.4)]{Grub} we have that $D((-\Delta)_D^s)= W_0^{2s,2}(\bOm)$ and hence, $p\in W^{2s,2}(\Om)$ in that case. But if $\frac 12\le s<1$, an example has been given in \cite[Remark 7.2]{RS-DP} where $D((-\Delta)_D^s)\not\subset W^{2s,2}(\Omega)$, thus in that case $p$ does not always belong to $W^{2s,2}(\Omega)$.
   As a result, the best known result 
   for  $\mathcal{N}_s p$ is as given in Lemma~\ref{lem:Nmap}. Since 
   $\mathcal{P}_{Z_{ad}}$ is a contraction (Lipschitz) we can conclude that $\bar{z}$ has
   the same regularity as $\mathcal{N}_s \bar{p}$, i.e., they are in $L^2(\RR^N\setminus\Om)$ globally 
   and in $W^{s,2}_{\rm loc}(\RR^N\setminus\Om)$ locally. As it is well-known, in case of 
   the classical Laplacian, one can use a boot-strap argument to improve the regularity of 
   $S\bar z=\bar{u}$. However this is not the case for fractional exterior value problems.
  }
 \end{remark}

\section{Fractional Robin exterior control problem}\label{s:ncp}
In this section we shall study the fractional Robin exterior control problem 
\eqref{eq:Sn}. We begin by setting the functional analytic framework. We let  
 \[
    Z _R:= L^2(\RR^N\setminus\Om,\mu), 
            \quad U_R := W^{s,2}_{\Om,\kappa} .
 \]
Notice that $d\mu = \kappa dx$.  In addition we shall assume that $\kappa\in L^1(\Omc)\cap L^\infty(\Omc)$ and $\kappa > 0$ a.e. in $\Omc$.
In view of Proposition~\ref{pro-sol-Ro} the following (solution-map) 
control-to-state map 

 \[
  S : Z_R \rightarrow U_R,\;\;\;z\mapsto u,
 \]
is well-defined. Moreover $S$ is linear and continuous (by \eqref{Est-sol-RB}).  Since $U_R \hookrightarrow L^2(\Om)$ with the embedding being continuous we can instead define 
 \[
  S : Z_R \rightarrow L^2(\Om). 
 \]
We can then write the so-called 
\emph{reduced fractional Robin exterior control problem}
 \begin{equation}\label{eq:rpn}
    \min_{z\in Z_{ad,R}} \mathcal{J}(z) 
       := J(Sz) + \frac{\xi}{2} \|z\|_{L^2(\RR^N\setminus\Om,\mu)}^2 . 
 \end{equation}
 We have the following well-posedness result.
 
 \begin{theorem}\label{thm:exit_ncp}
  Let $Z_{ad,R}$ be a closed and convex subset of $Z_R$. 
  Let either $\xi > 0$ or $Z_{ad,R} \subset Z_R$ be bounded.
  Moreover, let 
  $J : L^2(\Om) \rightarrow \mathbb{R}$ be weakly lower-semicontinuous. Then there 
  exists a solution $\bar{z}$ to \eqref{eq:rpn} and equivalently to \eqref{eq:ncp}. 
  If either $J$ is convex and $\xi > 0$ or $J$ is strictly
  convex and $\xi \ge 0$ then $\bar{z}$ is unique. 
 \end{theorem}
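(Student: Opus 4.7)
The plan is to mimic the proof of Theorem~\ref{thm:exit_dcp} almost verbatim, adapted to the Robin functional setting. I would apply the direct method of the calculus of variations. Start with a minimizing sequence $\{z_n\}_{n\in\NN}\subset Z_{ad,R}$ for $\mathcal J$. When $\xi>0$, the penalty term $\tfrac{\xi}{2}\|z_n\|^2_{L^2(\Omc,\mu)}$ forces $\{z_n\}$ to be bounded in $Z_R=L^2(\Omc,\mu)$; when $Z_{ad,R}$ is bounded, boundedness is immediate. Since $Z_R$ is a Hilbert space (hence reflexive) we extract a subsequence (not relabeled) with $z_n\rightharpoonup\bar z$ in $Z_R$, and since $Z_{ad,R}$ is closed and convex it is weakly closed, giving $\bar z\in Z_{ad,R}$.

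Next, I would show that $(\bar u,\bar z):=(S\bar z,\bar z)$ is admissible, i.e., $\bar u$ is a weak solution of \eqref{eq:Sn} in the sense of Definition~\ref{def:weak_n}. By Proposition~\ref{pro-sol-Ro} and the estimate \eqref{Est-sol-RB}, $S:Z_R\to U_R=W^{s,2}_{\Om,\kappa}$ is linear and continuous, therefore weakly continuous, so $u_n:=Sz_n\rightharpoonup\bar u$ in $U_R$. For every test function $v\in W^{s,2}_{\Om,\kappa}$ we pass to the limit in
\begin{align*}
\int\!\!\int_{\RR^{2N}\setminus(\Omc)^2}&\frac{(u_n(x)-u_n(y))(v(x)-v(y))}{|x-y|^{N+2s}}\;dxdy+\int_{\Omc}\kappa u_n v\;dx
=\int_{\Omc}\kappa z_n v\;dx,
\end{align*}
using weak convergence of $u_n$ against the bounded linear functionals on $U_R$ determined by $v$ on the left, and the fact that $v\in L^2(\Omc,\mu)$ together with $z_n\rightharpoonup\bar z$ in $L^2(\Omc,\mu)$ on the right. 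Hence $\bar u=S\bar z$.

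To conclude optimality I would invoke weak lower-semicontinuity of $\mathcal J$. Composing with the continuous embedding $U_R\hookrightarrow L^2(\Om)$ (Remark~\ref{rem-35}) yields $Sz_n\rightharpoonup S\bar z$ in $L^2(\Om)$, so by the assumed weak lower-semicontinuity of $J$,
\[
J(S\bar z)\le\liminf_{n\to\infty}J(Sz_n).
\]
The penalty $z\mapsto\tfrac{\xi}{2}\|z\|^2_{L^2(\Omc,\mu)}$ is continuous and convex, hence weakly lower-semicontinuous, so $\mathcal J(\bar z)\le\liminf_{n\to\infty}\mathcal J(z_n)=\inf_{z\in Z_{ad,R}}\mathcal J(z)$, proving that $\bar z$ is a minimizer.

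For uniqueness I would use strict convexity of $\mathcal J$: if $\xi>0$ and $J$ is convex, then $\mathcal J=J\circ S+\tfrac{\xi}{2}\|\cdot\|^2_{L^2(\Omc,\mu)}$ is the sum of a convex and a strictly convex functional, hence strictly convex; if $J$ is strictly convex and $\xi\ge 0$, linearity of $S$ together with injectivity of $S$ on $Z_{ad,R}$ (implicitly used as in the Dirichlet case) give that $J\circ S$ is strictly convex. In either case the minimizer is unique. The main obstacle I anticipate is making sure the limit passage in the state equation is clean: the left-hand side involves a bilinear form on $W^{s,2}_{\Om,\kappa}$ whose continuity is exactly what allows weak convergence of $u_n$ to transfer into the identity, and the right-hand side requires recognizing $v\mapsto\int_{\Omc}\kappa z v\,dx$ as the duality pairing in $L^2(\Omc,\mu)$ so that $z_n\rightharpoonup\bar z$ in this Hilbert space suffices — no compact embedding is actually needed here.
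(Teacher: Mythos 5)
Your proposal is correct and follows essentially the same route as the paper: direct method, weak compactness of the minimizing controls, weak continuity of the control-to-state map $S$ (obtained, as you do, from linearity together with the a priori bound \eqref{Est-sol-RB}), limit passage in the weak formulation \eqref{we-so}, and weak lower-semicontinuity of $\mathcal J$. Your remark that the strict-convexity argument in the case $\xi=0$, $J$ strictly convex implicitly requires injectivity of $S$ is a fair observation — the paper also leaves this unremarked in both Theorem~\ref{thm:exit_dcp} and Theorem~\ref{thm:exit_ncp} — but it is not a deviation from the paper's argument, merely a flag on a gap the paper shares.
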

 
 \begin{proof}
We proceed as the proof of Theorem~\ref{thm:exit_dcp}. 
Let  $\{z_n\}_{n\in\NN}\subset Z_{ad}$ be a minimizing sequence such that
\begin{align*}
 \inf_{z\in Z_{ad,R}} \mathcal{J}(z) = \lim_{n\rightarrow\infty} \mathcal{J}(z_n) .
\end{align*}
If $\xi > 0$ or 
$Z_{ad,R} \subset Z_R$ is bounded then
after a subsequence, if necessary,  we have $z_n \rightharpoonup \bar{z}$ in $L^2(\RR^N\setminus\Om,\mu)$ as $n\to\infty$. 
Now since $Z_{ad,R}$ is a convex and closed subset of  $Z_R$, it follows that  $\bar{z} \in Z_{ad,R}$.
  
 Next we show that the pair $(S\bar z,\bar z)$ satisfies the state equation. Notice that $u_n:=Sz_n$ is the weak solution of \eqref{eq:Sn} with boundary value $z_n$. Thus, by definition,  $u_n\in  W_{\Om,\kappa}^{s,2}$ and the identity
  \begin{align}\label{We-so-n}
\mathcal E(u_n,v)=\int_{\Omc} z_nv\;d\mu,
\end{align}
holds for every $v\in  W_{\Om,\kappa}^{s,2}$ and where we recall that $\mathcal E$ is given in \eqref{form-F}. We also notice that the mapping $S$ is also bounded  from $Z_R$ into $W_{\Om,\kappa}^{s,2}$ (by \eqref{Est-sol-RB}). This shows that the sequence $\{u_n\}_{n\in\NN}$ is bounded in $W_{\Om,\kappa}^{s,2}$. Thus, after a subsquence, if necessary, we have that $Sz_n=u_n \rightharpoonup S\bar{z}=\bar u$ in $W_{\Om,\kappa}^{s,2}$ as $n\to\infty$. This implies that 
\begin{align*}
&\lim_{n\to\infty}\left(\int\int_{\RR^{2N}\setminus(\Omc)^2}\frac{(u_n(x)-u_n(y))(v(x)-v(y))}{|x-y|^{N+2s}}\;dxdy+\int_{\Omc} u_nv\;d\mu\right)\\
=& \int\int_{\RR^{2N}\setminus(\Omc)^2}\frac{(\bar u(x)-\bar u(y))(v(x)-v(y))}{|x-y|^{N+2s}}\;dxdy+\int_{\Omc}\bar uv\;d\mu,
\end{align*}
for every $v\in W_{\Om,\kappa}^{s,2}$. Since $z_n \rightharpoonup \bar z$ in $L^2(\Omc,\mu)$ as $n\to\infty$, it follows that
 \begin{align*}
\lim_{n\to\infty}\int_{\Omc}z_nv\;d\mu=\int_{\Omc}\bar zv\;d\mu,
 \end{align*}
for every $v\in W_{\Om,\kappa}^{s,2}$. Therefore we can pass to the limit in \eqref{We-so-n} as $n\to\infty$ to obtain that $(S\bar z,\bar z)=(\bar u,\bar z)$ satisfies the state equation \eqref{eq:Sn}.
The rest of the steps are similar to the proof of Theorem~\ref{thm:exit_dcp} and we omit them for brevity. 
 \end{proof}
 
As in the case of the fractional Dirichlet exterior control problem \eqref{eq:rpd} we will 
next identify the adjoint of the control-to-state map $S$.

 \begin{lemma}\label{lem:adjS_n}
  For the state equation \eqref{eq:Sn} the adjoint operator 
  $S^* : L^2(\Om) \rightarrow Z_R$
  is given by 
  
  \[
   ( S^* w , z )_{Z_R} = \int_{\RR^N\setminus\Om} p z\;d\mu \quad 
    \forall z \in Z_R , 
  \]
  where $w \in L^2(\Om)$ and $p \in W^{s,2}_{\Om,\kappa}$ is the weak solution to
  \begin{equation}\label{eq:adj_state_n}
  \begin{cases} 
   (-\Delta)^s p &= w \quad \mbox{in } \Om \\
     \mathcal{N}_s p +\kappa p&= 0 \quad \mbox{ in } \RR^N \setminus \Om .
  \end{cases} 
  \end{equation}
 \end{lemma}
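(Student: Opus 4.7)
The plan is to identify $S^\star$ by using the natural duality pairing $(w,Sz)_{L^2(\Om)}=(S^\star w,z)_{Z_R}$ together with the symmetry of the bilinear form $\mathcal E$ associated with the Robin problem. Since $Z_R=L^2(\Omc,\mu)$ with $d\mu=\kappa dx$, showing that $(S^\star w,z)_{Z_R}=\int_{\Omc} pz\,d\mu$ amounts to computing the left-hand side explicitly in terms of the adjoint state $p$.

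First I would check that the adjoint problem \eqref{eq:adj_state_n} is well-posed in $W^{s,2}_{\Om,\kappa}$. Given $w\in L^2(\Omega)$, the continuous embedding $W^{s,2}_{\Om,\kappa}\hookrightarrow L^2(\Om)$ shows that $v\mapsto\int_\Om wv\,dx$ is a continuous linear functional on $W^{s,2}_{\Om,\kappa}$, i.e.\ an element of $(W^{s,2}_{\Om,\kappa})^\star$. Thus Proposition~\ref{pro-sol-Ro} (applied with right-hand side $f=w$ and exterior datum $z=0$) delivers a weak solution $p\in W^{s,2}_{\Om,\kappa}$ satisfying
\begin{align*}
\mathcal E(p,v)=\int_\Om wv\,dx\qquad \text{for all } v\in W^{s,2}_{\Om,\kappa},
\end{align*}
which is exactly the weak formulation of \eqref{eq:adj_state_n}, since $\mathcal N_s p+\kappa p=0$ in $\Omc$ is encoded in the Robin weak form (with zero exterior forcing). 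Under the running assumption $\kappa>0$ a.e.\ in $\Omc$, this solution is unique.

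Next, let $u=Sz\in W^{s,2}_{\Om,\kappa}$ denote the state associated with $z\in Z_R$, so that by Definition~\ref{def:weak_n} (with $f=0$)
\begin{align*}
\mathcal E(u,v)=\int_{\Omc}\kappa zv\,dx\qquad \text{for all } v\in W^{s,2}_{\Om,\kappa}.
\end{align*}
Choosing $v=p$ in this identity and $v=u$ in the weak formulation of the adjoint, and invoking the symmetry of $\mathcal E$ (immediate from its definition \eqref{form-F}), I get
\begin{align*}
\int_\Om w u\,dx=\mathcal E(p,u)=\mathcal E(u,p)=\int_{\Omc}\kappa z p\,dx=\int_{\Omc} pz\,d\mu.
\end{align*}
By definition of the adjoint, the left-hand side equals $(S^\star w,z)_{Z_R}$, which yields the claimed representation.

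The only genuinely delicate step is the well-posedness of the adjoint problem in the form required here, since Proposition~\ref{pro-sol-Ro} in the body of the paper is only stated up to a constant unless one imposes extra conditions on $\kappa$ (cf.\ Remark~\ref{rem:nonunique}). This is precisely why the standing assumptions of this section require $\kappa\in L^1(\Omc)\cap L^\infty(\Omc)$ with $\kappa>0$ a.e., which ensures that $\mathcal E$ is coercive on $W^{s,2}_{\Om,\kappa}$ and hence $p$ is uniquely determined. Once this is in place, the rest of the argument is simply a symmetry-of-$\mathcal E$ computation.
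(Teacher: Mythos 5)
Your proof is correct and follows essentially the same route as the paper: both test the adjoint equation against $u = Sz$ and use the weak formulation of the state equation (equivalently, the symmetry of $\mathcal E$) to arrive at $(w,Sz)_{L^2(\Om)} = \int_{\Omc} pz\,d\mu$. You are a bit more explicit than the paper about verifying well-posedness of the adjoint problem via Proposition~\ref{pro-sol-Ro} and the standing positivity assumption on $\kappa$, but the core computation is the same.
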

 
 \begin{proof}
  Let $w \in L^2(\Om)$ and $z \in Z_R$. Then  $Sz \in W^{s,2}_{\Om,\kappa}
  \hookrightarrow L^2(\Om)$ with the embedding being continuous. Then we can write
  \[
   (w,Sz)_{L^2(\Om)} = ( S^*w,z )_{Z_R} . 
  \] 
  Next we test \eqref{eq:adj_state_n} with $Sz$ to arrive at 
  \begin{align*}
   ( w,Sz)_{L^2(\Om)} 
         =& \frac{C_{N,s}}{2} 
         \int\int_{\RR^{2N}\setminus(\RR^N\setminus\Om)^2} 
         \frac{(u(x)-u(y))(p(x)-p(y))}{|x-y|^{N+2s}} \;dxdy \\
         &+ \int_{\Omc}
         u p\;d\mu 
         = \int_{\RR^N\setminus\Om} zp\;d\mu 
          = ( S^*w, z )_{Z_R},
  \end{align*}
  where we have used the fact that $u$ solves the state equation according to 
  Definition~\ref{def:weak_n}. This yields the asserted result. 
 \end{proof}

For the remainder of this section we will assume that $\xi > 0$. The proof
of next result is similar to Theorem~\ref{thm:focD} and is omitted for brevity. 

 \begin{theorem}
  Let the assumptions of Theorem~\ref{thm:exit_ncp} hold. Let $\mathcal{Z}$ be an open 
  set in $Z_R$ such that $Z_{ad,R} \subset \mathcal{Z}$. Let 
  $u \mapsto J(u) : L^2(\Om) \rightarrow \RR$ be continuously Fr\'echet differentiable with 
  $J'(u) \in L^2(\Om)$. If $\bar{z}$ is a minimizer of \eqref{eq:rpn}
  over $Z_{ad,R}$ then the first necessary optimality conditions are given by 
  \begin{equation}\label{eq:foc_ncp}
   \int_{\RR^N\setminus\Om} (\bar{p}+\xi\bar{z}) (z-\bar{z})\;d\mu \ge 0 , \quad z \in Z_{ad,R} 
  \end{equation}
  where $\bar{p} \in W^{s,2}_{\Om,\kappa}$ solves the adjoint equation 
  \begin{equation}\label{eq:adj_state_np}
  \begin{cases} 
   (-\Delta)^s \bar{p} &= J'(\bar{u}) \quad \mbox{in } \Om \\
     \mathcal{N}_s \bar{p} +\kappa \bar{p}&= 0 \quad \mbox{ in } \RR^N \setminus \Om .
  \end{cases} 
  \end{equation}
  Equivalently we can write \eqref{eq:foc_ncp} as 
  \begin{equation}\label{eq:projf_n}
   \bar{z} = \mathcal{P}_{Z_{ad,R}}\left(-\frac{\bar{p}}{\xi}\right) , 
  \end{equation}
  where $\mathcal{P}_{Z_{ad,R}}$ is the projection onto 
  the set $Z_{ad,R}$. 
  If $J$ is convex then \eqref{eq:foc_ncp} is 
  a sufficient condition. 
 \end{theorem}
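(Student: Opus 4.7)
The plan is to mirror the argument used for Theorem~\ref{thm:focD} in the Dirichlet setting, replacing the role of $S^*$ from Lemma~\ref{lem:adjS_d} by the Robin adjoint formula from Lemma~\ref{lem:adjS_n}. First I would observe that $S:Z_R\to L^2(\Om)$ is linear and continuous by Proposition~\ref{pro-sol-Ro} (together with estimate \eqref{Est-sol-RB} and the embedding $W^{s,2}_{\Om,\kappa}\hookrightarrow L^2(\Om)$), so by the chain rule $\mathcal{J}$ is continuously Fr\'echet differentiable on the open set $\mathcal{Z}\supset Z_{ad,R}$, with derivative at $\bar z$ in direction $h\in Z_R$ given by
\begin{align*}
\mathcal{J}'(\bar z)h \;=\; (J'(S\bar z),Sh)_{L^2(\Om)} + \xi(\bar z,h)_{Z_R}\;=\;(S^*J'(\bar u),h)_{Z_R} + \xi(\bar z,h)_{Z_R}.
\end{align*}
Using Lemma~\ref{lem:adjS_n} with $w=J'(\bar u)\in L^2(\Om)$ identifies $S^*J'(\bar u)$ as the functional $h\mapsto \int_{\Omc}\bar p h\,d\mu$, where $\bar p\in W^{s,2}_{\Om,\kappa}$ is the weak solution to \eqref{eq:adj_state_np}.

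Next, since $\bar z$ is a minimizer of $\mathcal{J}$ over the closed convex set $Z_{ad,R}\subset\mathcal{Z}$ and $\mathcal{J}$ is Fr\'echet differentiable in a neighborhood of $\bar z$, the standard first-order necessary condition (e.g.\ \cite[Theorem~3.3.5]{HAttouch_GButtazzo_GMichaille_2014a}) yields $\mathcal{J}'(\bar z)(z-\bar z)\ge 0$ for every $z\in Z_{ad,R}$. Combining with the expression above gives exactly \eqref{eq:foc_ncp}. To derive the projection characterization \eqref{eq:projf_n}, I would rewrite the variational inequality as
\begin{align*}
\Big(\bar z - \Big(-\tfrac{\bar p}{\xi}\Big),\, z-\bar z\Big)_{Z_R}\ge 0\qquad\forall z\in Z_{ad,R},
\end{align*}
which is the defining characterization of the metric projection in the Hilbert space $Z_R=L^2(\Omc,\mu)$ onto the closed convex set $Z_{ad,R}$; here it is crucial that the inner product on $Z_R$ is the weighted one $(\cdot,\cdot)_{L^2(\Omc,\mu)}$, which matches the weight $\kappa$ appearing in the boundary integrals in Lemma~\ref{lem:adjS_n}.

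Finally, for the sufficiency statement assume $J$ is convex. Since $S$ is linear, $z\mapsto J(Sz)$ is convex, and $z\mapsto \tfrac{\xi}{2}\|z\|_{Z_R}^2$ is convex, hence $\mathcal{J}$ is convex on $\mathcal{Z}$; then any $\bar z\in Z_{ad,R}$ satisfying \eqref{eq:foc_ncp} is a global minimizer by the standard convex-analysis fact that first-order variational inequalities are sufficient for convex objectives. The main (and only nontrivial) obstacle is making sure the adjoint identification is applied correctly, in particular that the factor $\kappa$ in the exterior integral is absorbed into the measure $d\mu$ so that the functional $S^*J'(\bar u)$ is genuinely realized as an element of $Z_R=L^2(\Omc,\mu)$; once this bookkeeping is in place, the remaining arguments are routine.
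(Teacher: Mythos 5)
Your proof is correct and follows the same route the paper indicates (the paper explicitly says the proof is ``similar to Theorem~\ref{thm:focD} and is omitted for brevity''): compute $\mathcal J'(\bar z)h$ by the chain rule, identify $S^*J'(\bar u)$ via Lemma~\ref{lem:adjS_n}, and invoke \cite[Theorem~3.3.5]{HAttouch_GButtazzo_GMichaille_2014a} for the variational inequality and its projection form. Your observation that the weight $\kappa$ must be consistently absorbed into the measure $d\mu$ so that the adjoint is realized in $Z_R=L^2(\Omc,\mu)$ is exactly the bookkeeping point that makes the Robin case match the Dirichlet template.
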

 
    \begin{remark}[\bf Regularity of optimization variables]
        {\rm 
            As pointed out in Remark~\ref{rem:ocdreg} (Dirichlet case) the regularity for the
            integral fractional Laplacian is a delicate issue. In fact for the Robin problem, 
            in $\Omc$ we can only guarantee that $\bar{p}$ is in $L^2(\Omc,\mu)$ . 
            Therefore we cannot use the classical boot-strap argument to further improve the regularity 
            of the control $\bar{z}$. 
        }         
    \end{remark}


\section{Approximation of Dirichlet exterior value and control problems}\label{s:DirRob}

We recall that the Dirichlet exterior value problem 
\eqref{eq:fracPoisson} in our case is only understood in the very-weak sense (cf.~Theorem~\ref{thm:vwdexist}). Moreover a numerical approximation of solutions
to this problem will require a direct approximation of the interaction operator $\mathcal{N}_s$ which is challenging. 


The purpose of this section is to not only introduce a new approach to approximate 
weak and very-weak solutions to the nonhomogeneous Dirichlet exterior value problem 
(recall that if $z$ is regular enough then a very-weak solution is a weak solution, 
and every weak solution is a very-weak solution cf.~Theorem~\ref{thm:vwdexist}) 
but also to consider a regularized fractional Dirichlet exterior control problem. 
We begin by stating the regularized Dirichlet exterior value problem: 
Let $n\in\NN$. Find $u_n \in W^{s,2}_{\Om,\kappa}$ solving 
 \begin{equation}\label{eq:Sn-G-Reg}
 \begin{cases}
    (-\Delta)^s u_n &= 0 \quad \mbox{in } \Om \\
    \mathcal{N}_s u_n  +n\kappa u_n &= n\kappa z \quad \mbox{in } \RR^N\setminus \Om .
 \end{cases}                
 \end{equation} 
Notice that the fractional regularized Dirichlet exterior problem \eqref{eq:Sn-G-Reg} is 
nothing but the fractional Robin exterior value problem \eqref{eq:Sn}. 
We proceed by showing that as $n \rightarrow \infty$ the solution $u_n$ 
to \eqref{eq:Sn-G-Reg} converges to $u$ solving the state equation \eqref{eq:fracPoisson} 
in the weak sense \eqref{eq:vw_d}. This is our new method to solve the non-homogeneous
Dirichlet exterior value problem. Recall that the weak formulation of \eqref{eq:Sn-G-Reg} does not require access to $\mathcal{N}_s$ (cf. Definition~\eqref{def:weak_n}) and it 
is straightforward to implement.

In this section we are interested in solutions $u_n$ to the system \eqref{eq:Sn-G-Reg} that belong to the space $W_{\Om,\kappa}^{s,2}\cap L^2(\Omc)$ which is endowed with the norm
\begin{align}
\|u\|_{W_{\Om,\kappa}^{s,2}\cap L^2(\Omc)}:=\left(\|u\|_{W_{\Om,\kappa}^{s,2}}^2+\|u\|_{L^2(\Omc)}^2\right)^{\frac 12},\;\;u\in W_{\Om,\kappa}^{s,2}\cap L^2(\Omc).
\end{align}
 In addition, in our application we shall take $\kappa$ such that its support $\mbox{supp}[\kappa]$ is a compact set in $\Omc$. For this reason we shall assume the following.

\begin{assumption}\label{asum}
We assume that $\kappa\in L^1(\Omc)\cap L^\infty(\Omc)$ and satisfies $\kappa>0$ almost everywhere in $K:=\mbox{supp}[\kappa]\subset\Omc$, where $K$ is a compact set.
\end{assumption}

It follows from Assumption \ref{asum} that $\displaystyle\int_{\Omc}\kappa\;dx>0$. 

To show the existence of weak solutions to the system in \eqref{eq:Sn-G-Reg} that belong to $W_{\Om,\kappa}^{s,2}\cap L^2(\Omc)$, we need some preparation.

\begin{lemma}\label{le62}
Assume that Assumption \ref{asum} holds. Then
\begin{equation}\label{norm-equiv}
\|u\|_W:=\left(\int\int_{\RR^{2N}\setminus(\Omc)^2}\frac{|u(x)-u(y)|^2}{|x-y|^{N+2s}}\;dxdy+\int_{\Omc}|u|^2\;dx\right)^{\frac 12},
\end{equation}
defines an equivalent norm on $W_{\Om,\kappa}^{s,2}\cap L^2(\Omc)$.
\end{lemma}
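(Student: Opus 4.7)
The plan is to prove the two inequalities defining the equivalence. The forward direction is trivial since $\|u\|_W^2$ is literally the sum of two nonnegative terms already appearing in the definition of $\|u\|_{W_{\Om,\kappa}^{s,2}\cap L^2(\Omc)}^2$. Thus $\|u\|_W\le \|u\|_{W_{\Om,\kappa}^{s,2}\cap L^2(\Omc)}$ for free. The real content is the reverse inequality, for which I would need to bound both $\|u\|_{L^2(\Omega)}^2$ and $\||\kappa|^{1/2}u\|_{L^2(\Omc)}^2$ in terms of $\|u\|_W^2$.

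The second of these is immediate from Assumption~\ref{asum}: using $\kappa\in L^\infty(\Omc)$ one gets
\begin{align*}
\||\kappa|^{1/2}u\|_{L^2(\Omc)}^2=\int_{\Omc}\kappa|u|^2\,dx\le\|\kappa\|_{L^\infty(\Omc)}\int_{\Omc}|u|^2\,dx\le\|\kappa\|_{L^\infty(\Omc)}\|u\|_W^2.
\end{align*}
So the crux is a Poincar\'e-type estimate controlling $\|u\|_{L^2(\Omega)}^2$ by $\|u\|_W^2$. The idea I would use is a two-point trick: for $x\in\Omega$ and $y\in K:=\mathrm{supp}[\kappa]$, write $|u(x)|^2\le 2|u(y)|^2+2|u(x)-u(y)|^2$, integrate in $y$ over $K$ (using $|K|>0$ by Assumption~\ref{asum}) and then in $x$ over $\Omega$, to obtain
\begin{align*}
|K|\,\|u\|_{L^2(\Omega)}^2\le 2|\Omega|\,\|u\|_{L^2(K)}^2+2\int_\Omega\int_K|u(x)-u(y)|^2\,dy\,dx.
\end{align*}

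The second term is where the kernel needs to be reinstated. Because $\Omega$ is bounded and $K$ is compact, there exists a finite $R:=\sup\{|x-y|:x\in\overline{\Omega},\,y\in K\}$, and hence $|x-y|^{-(N+2s)}\ge R^{-(N+2s)}$ on $\Omega\times K$. Since $\Omega\times K\subset\RR^{2N}\setminus(\Omc)^2$, I can estimate
\begin{align*}
\int_\Omega\int_K|u(x)-u(y)|^2\,dy\,dx\le R^{N+2s}\!\!\int\!\!\int_{\RR^{2N}\setminus(\Omc)^2}\frac{|u(x)-u(y)|^2}{|x-y|^{N+2s}}\,dx\,dy\le R^{N+2s}\|u\|_W^2.
\end{align*}
Combining this with $\|u\|_{L^2(K)}\le \|u\|_{L^2(\Omc)}\le \|u\|_W$ yields $\|u\|_{L^2(\Omega)}^2\le C\|u\|_W^2$ with $C=C(|K|,|\Omega|,R)$, and adding the bound on $\||\kappa|^{1/2}u\|_{L^2(\Omc)}^2$ and the trivially dominated pieces of $\|\cdot\|_{W_{\Om,\kappa}^{s,2}\cap L^2(\Omc)}$ finishes the proof.

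The main obstacle is really just this Poincar\'e-type step. Once one recognizes that compactness of $K$ and boundedness of $\Omega$ give a uniform upper bound on $|x-y|$, hence a lower bound on the singular kernel, the argument is elementary. No contradiction/compactness scheme or trace theorem is required, which is convenient because we are working in the nonstandard space $W_{\Om,\kappa}^{s,2}\cap L^2(\Omc)$ where such tools would need to be redeveloped.
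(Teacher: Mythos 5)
Your proof is correct, and it takes a genuinely different and more elementary route than the paper. The paper establishes the Poincar\'e-type inequality \eqref{A1} by a compactness/contradiction argument: assuming it fails along a sequence $(u_n)$ normalized so that $\|u_n\|_{L^2(\Om)}=1$, they extract a weakly convergent subsequence using the compact embedding $W_{\Om,\kappa}^{s,2}\hookrightarrow L^2(\Om)$ from Remark~\ref{rem-35}, deduce from the vanishing Gagliardo term and the vanishing $L^2(\Omc)$ term that a further subsequence of $u_n$ converges a.e.\ to a constant, identify that constant as $0$, and reach a contradiction. Your two-point trick bypasses all of this: the elementary inequality $|u(x)|^2\le 2|u(y)|^2+2|u(x)-u(y)|^2$, integrated over $y\in K$ (note $|K|>0$, since the paper's standing observation $\int_{\Omc}\kappa\,dx>0$ together with $\kappa\le\|\kappa\|_{L^\infty(\Omc)}$ forces it) and then over $x\in\Om$, combined with the kernel lower bound $|x-y|^{-(N+2s)}\ge R^{-(N+2s)}$ on $\overline\Om\times K$ and the inclusion $\Om\times K\subset\RR^{2N}\setminus(\Omc)^2$, gives \eqref{A1} directly with the explicit constant $C=2(|\Om|+R^{N+2s})/|K|$. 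Your argument is constructive and quantitative, needs no compactness of any Sobolev embedding (hence no boundary regularity of $\Om$ beyond boundedness), and avoids the delicate a.e.-extraction and uniqueness-of-limit bookkeeping in the paper's contradiction scheme. Both proofs dispatch the trivial forward inequality and the $\kappa\in L^\infty$ bound on the weighted $L^2(\Omc,\mu)$ term identically.
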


\begin{proof}
Firstly, it is readily seen that there is a constant $C>0$ such that 
\begin{equation}\label{Ca-2}
\|u\|_W\le C\|u\|_{W_{\Om,\kappa}^{s,2}\cap L^2(\Omc)} \;\mbox{ for all }\; u\in W_{\Om,\kappa}^{s,2}\cap L^2(\Omc).
\end{equation}

Secondly we claim that there is a constant $C>0$ such that 
\begin{equation}\label{Ca-1}
\|u\|_{W_{\Om,\kappa}^{s,2}\cap L^2(\Omc)}\le C\|u\|_W \;\mbox{ for all }\; u\in W_{\Om,\kappa}^{s,2}\cap L^2(\Omc).
\end{equation}
It is clear that 
\begin{align}\label{A2}
\int_{\Omc}|u|^2\;d\mu\le \|\kappa\|_{L^\infty(\Omc)}\int_{\Omc}|u|^2\;dx.
\end{align}
It suffices to show that
 there is a constant $C>0$ such that for every $u\in W_{\Om,\kappa}^{s,2}\cap L^2(\Omc)$,
\begin{align}\label{A1}
\int_{\Omega}|u|^2\;dx\le C\left(\int\int_{\RR^{2N}\setminus(\Omc)^2}\frac{|u(x)-u(y)|^2}{|x-y|^{N+2s}}\;dxdy+\int_{\Omc}|u|^2\;dx\right),
\end{align}
We prove \eqref{A1} by contradiction.
Assume to the contrary that for every $n\in\NN$, there exists $(u_n)_{n\in\NN}\subset W_{\Om,\kappa}^{s,2}\cap L^2(\Omc)$ such that
\begin{align}\label{eq-con}
\int_{\Om}|u_n|^2\;dx> n\left(\int\int_{\RR^{2N}\setminus(\Omc)^2}\frac{|u_n(x)-u_n(y)|^2}{|x-y|^{N+2s}}\;dxdy+\int_{\Omc}|u_n|^2\;dx\right).
\end{align}
By possibly dividing \eqref{eq-con} by $\|u_n\|_{L^2(\Omega)}^2$ we may assume that $\|u_n\|_{L^2(\Omega)}^2=1$ for every $n\in\NN$. Hence, by \eqref{eq-con}, there is a constant $C>0$ (independent of $n$) such that for every $n\in\NN$,
\begin{align}\label{A1-1}
\int\int_{\RR^{2N}\setminus(\Omc)^2}\frac{|u_n(x)-u_n(y)|^2}{|x-y|^{N+2s}}\;dxdy+\int_{\Omc}|u_n|^2\;dx\le C.
\end{align}
Since $\kappa\in L^\infty(\Omc)$,  \eqref{A1-1} and \eqref{A2}  imply that for every $n\in\NN$,
\begin{align}\label{A1-2}
\int_{\Omc}|u_n|^2\;d\mu\le C.
\end{align}
Now \eqref{A1-1}, \eqref{A1-2} together with $\|u_n\|_{L^2(\Omega)}^2=1$ implies that $(u_n)_{n\in\NN}$ is a bounded sequence in $W_{\Om,\kappa}^{s,2}\cap L^2(\Omc)$. Therefore, after passing to a subsequence, if necessary, we may assume that $u_n$ converges weakly to some $u\in W_{\Om,\kappa}^{s,2}\cap L^2(\Omc)$ and strongly to $u$ in $L^2(\Omega)$, as $n\to\infty$ (as the embedding $W_{\Om,\kappa}^{s,2}\hookrightarrow L^2(\Omega)$ is compact by Remark \ref{rem-35}(c)). 
It follows from \eqref{eq-con} and the fact that $\|u_n\|_{L^2(\Omega)}^2=1$ that
\begin{align*}
\lim_{n\to\infty}\int\int_{\RR^{2N}\setminus(\Omc)^2}\frac{|u_n(x)-u_n(y)|^2}{|x-y|^{N+2s}}\;dxdy=0\;\mbox{ and }\;\lim_{n\to\infty} \int_{\Omc}|u_n|^2\;dx=0.
\end{align*}
This implies that $u_n|_{\Omc}$ converges strongly to zero in $L^2(\Omc)$ as $n\to\infty$,  and after passing to a subsequence, if necessary, we have that
\begin{equation}\label{lim}
\lim_{n\to\infty}|u_n(x)-u_n(y)|=0\;\;\mbox{ for a.e. }\, (x,y)\in\RR^{2N}\setminus(\Omc)^2,
\end{equation}
and
\begin{align}\label{lim2}
u_n\to 0\;\mbox{ a.e. in }\;\Omc\;\mbox{ as }\; n\to\infty.
\end{align}
More precisely, \eqref{lim} implies that
\begin{equation}\label{MaWa}
\begin{cases}
\lim_{n\to\infty}|u_n(x)-u_n(y)|=0\;\;&\mbox{ for a.e. }\, (x,y)\in\Omega\times\Omega,\\
\lim_{n\to\infty}|u_n(x)-u_n(y)|=0\;\;&\mbox{ for a.e. }\, (x,y)\in\Omega\times(\Omc),\\
\lim_{n\to\infty}|u_n(x)-u_n(y)|=0\;\;&\mbox{ for a.e. }\, (x,y)\in(\Omc)\times\Omega.
\end{cases}
\end{equation}
Using \eqref{MaWa}, we get that $u_n$ converges a.e. to some constant function $c$ in $\RR^N$ as $n\to\infty$.
From \eqref{lim2} and the uniqueness of the limit, we have that $c=0$ a.e. in $\RR^N$.
Since (after passing to a subsequence, if necessary) $u_n$ converges a.e. to $u$ in $\Omega$ as $n\to\infty$, the uniqueness of the limit also implies that $c=u=0$ a.e. on $\Omega$. On the other hand, we have  $\|u\|_{L^2(\Omega)}^2=\lim_{n\to\infty}\|u_n\|_{L^2(\Omega)}^2=1$, and this is a contradiction. Hence, \eqref{eq-con} is not possible and we have shown \eqref{A1}.

Finally the lemma follows from \eqref{Ca-2} and \eqref{Ca-1}. The proof is finished.
\end{proof}

%

%
%
%

The following theorem is the main result of this section.
 
 \begin{theorem}[\bf Approximation of weak solutions to Dirichlet problem]
 \label{thm:approx_dbcp}
 Assume that Assumption \ref{asum} holds. Then the following assertions hold.
 
\begin{enumerate}
\item  Let $z \in W^{s,2}(\Omc)$ and  $u_{n} \in W^{s,2}_{\Om,\kappa}\cap L^2(\Omc)$ be the weak solution of  \eqref{eq:Sn-G-Reg}. Let $u\in W^{s,2}(\RR^N)$ be the weak solution to the state equation 
  \eqref{eq:Sd}. Then there is a constant $C>0$ (independent of $n$) such that
  \begin{align}\label{es-diff}
  \|u-u_{n}\|_{L^2(\RR^N)}\le \frac{C}{n}\|u\|_{W^{s,2}(\RR^N)}.
  \end{align}
In particular $u_{n}$ converges strongly to $u$ in $L^2(\Omega)$ as $n\to\infty$.

\item Let $z \in L^2(\Omc)$ and $u_{n} \in W^{s,2}_{\Om,\kappa}\cap L^2(\Omc)$ be the weak solution of \eqref{eq:Sn-G-Reg}. Then there is a subsequence that we still denote by $\{u_n\}_{n\in\NN}$  and a $\tilde u\in L^2(\RR^N)$ such that $u_{n}\rightharpoonup \tilde u$ in $L^2(\RR^N)$ as $n\to\infty$, and $\tilde u$ satisfies

\begin{align}\label{eq63}
\int_{\Omega}\tilde u(-\Delta)^sv\;dx=-\int_{\Omc}\tilde u\mathcal N_sv\;dx,
\end{align}
for all $v\in V$.
\end{enumerate}
 \end{theorem}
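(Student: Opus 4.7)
The plan is to compare the weak formulations for $u$ and $u_n$ to derive an equation for $w_n := u - u_n$, then exploit the large parameter $n$ to extract the $1/n$ rate. Introducing the bilinear form
\begin{equation*}
\mathcal A(\varphi,\psi) := \int\!\!\int_{\RR^{2N}\setminus(\Omc)^2} \frac{(\varphi(x)-\varphi(y))(\psi(x)-\psi(y))}{|x-y|^{N+2s}}\,dx\,dy,
\end{equation*}
I would first apply the integration by parts formula \eqref{Int-Part} of Proposition~\ref{prop:prop}(b) to $u\in W^{s,2}(\RR^N)$, using $(-\Delta)^s u = 0$ in $\Omega$, to recast the Dirichlet problem as $\frac{C_{N,s}}{2}\mathcal A(u,v) = \int_\Omc v\,\mathcal N_s u\,dx$; this identity extends by a density argument (or direct justification from the finiteness of each term) to test functions $v\in W^{s,2}_{\Om,\kappa}\cap L^2(\Omc)$. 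Subtracting the Robin weak formulation \eqref{we-so} for $u_n$ (with $\kappa$ replaced by $n\kappa$) and using $u = z$ in $\Omc$ produces, for every such $v$,
\begin{equation*}
\frac{C_{N,s}}{2}\mathcal A(w_n,v) + n\int_\Omc \kappa w_n v\,dx = \int_\Omc v\,\mathcal N_s u\,dx.
\end{equation*}
Testing with $v = w_n$ (which lies in $W^{s,2}_{\Om,\kappa}\cap L^2(\Omc)$ since $W^{s,2}(\RR^N)\hookrightarrow W^{s,2}_{\Om,\kappa}\cap L^2(\Omc)$) and bounding the right-hand side by $C\|w_n\|_{L^2(\Omc)}\|u\|_{W^{s,2}(\RR^N)}$ via Cauchy--Schwarz and Lemma~\ref{lem:Nmap} gives the coercive inequality from which the $1/n$ estimate \eqref{es-diff} is to be extracted.

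\textbf{Part (b).} The strategy is compactness followed by weak limit passage, pivoting on the key observation that test functions $v\in V = D((-\Delta)_D^s)$ vanish in $\Omc$ and thus annihilate the Robin penalty. Testing \eqref{we-so} (for $u_n$, with coefficient $n\kappa$) with $v = u_n$ and applying Young's inequality yields
\begin{equation*}
\frac{C_{N,s}}{2}\mathcal A(u_n,u_n) + \frac{n}{2}\|u_n\|^2_{L^2(\Omc,\mu)} \le \frac{n}{2}\|z\|^2_{L^2(\Omc,\mu)},
\end{equation*}
which combined with the norm equivalence of Lemma~\ref{le62} furnishes a uniform bound on $\{u_n\}$ in $L^2(\RR^N)$; I extract a subsequence $u_n \rightharpoonup \tilde u$ weakly in $L^2(\RR^N)$. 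For any $v\in V$, both Robin terms in \eqref{we-so} vanish (since $v=0$ in $\Omc$), reducing the weak equation to $\mathcal A(u_n,v) = 0$. Proposition~\ref{prop:prop}(b) then rewrites this as
\begin{equation*}
\int_\Om u_n (-\Delta)^s v\,dx + \int_\Omc u_n \mathcal N_s v\,dx = 0,
\end{equation*}
and since $(-\Delta)^s v\in L^2(\Om)$ by definition of $V$ and $\mathcal N_s v\in L^2(\Omc)$ by Lemma~\ref{lem:Nmap}, the weak convergence of $u_n$ in $L^2(\RR^N)$ passes to the limit to give exactly \eqref{eq63}.

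\textbf{Main obstacle.} Closing the $1/n$ rate in part (a) is the delicate step. The coercivity bound only gives $n\int_\Omc \kappa w_n^2\,dx \le C\|w_n\|_{L^2(\Omc)}\|u\|_{W^{s,2}(\RR^N)}$, which controls $w_n$ pointwise only on $K = \mathrm{supp}[\kappa]$; because $\kappa$ vanishes on $\Omc\setminus K$ and may degenerate near $\partial K$, globalising this localised decay to an $L^2(\RR^N)$ bound requires coupling the control on $K$ with the off-diagonal interactions in $\mathcal A(w_n,w_n)$ (which link $\Om$ to $\Omc\setminus K$) and with the $L^2(\Omc)$ contribution of the equivalent norm $\|\cdot\|_W$ from Lemma~\ref{le62}. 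The anticipated mechanism is a Young inequality with weight $n\kappa$ on $K$, followed by a propagation step exploiting the nonlocality of $\mathcal A$ to carry the decay across $\RR^N$. Part (b), by contrast, is essentially routine once the annihilation of the Robin terms by $V$-test functions is observed.
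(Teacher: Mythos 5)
Your setup for part (a) — subtracting the two weak formulations to obtain the error equation $\tfrac{C_{N,s}}{2}\mathcal A(w_n,v) + n\int_{\Omc}\kappa w_n v\,dx = \int_{\Omc} v\,\mathcal N_s u\,dx$, then testing with $v=w_n$ — exactly reproduces the paper's identity \eqref{IN-I} and yields (as you correctly note) only the exterior estimate $\|u-u_n\|_{L^2(\Omc)} \le \tfrac{C}{n}\|u\|_{W^{s,2}(\RR^N)}$. The ingredient you are missing is not a ``propagation step through the nonlocality of $\mathcal A$'' but a duality (Aubin--Nitsche) argument: write $\|u-u_n\|_{L^2(\Om)} = \sup_{\eta\in L^2(\Om)} |\int_\Om (u-u_n)\eta\,dx| / \|\eta\|_{L^2(\Om)}$, and for each $\eta$ solve the auxiliary zero-Dirichlet problem $(-\Delta)^s w=\eta$ in $\Om$, $w=0$ in $\Omc$. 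Using \eqref{IN-I} with this $w$ (which is admissible) and the fact that $\mathcal E_n(u-u_n,w)=\int_{\Omc} w\,\mathcal N_s u\,dx = 0$ (since $w$ vanishes in $\Omc$), one gets $\int_\Om (u-u_n)\eta\,dx = -\int_{\Omc}(u-u_n)\mathcal N_s w\,dx$, and the already-established $L^2(\Omc)$ decay together with $\|w\|_{W^{s,2}(\RR^N)}\le C\|\eta\|_{L^2(\Om)}$ and Lemma~\ref{lem:Nmap} transfers the $1/n$ rate to $\Om$. This is a genuinely different mechanism from the coercivity/propagation reasoning you anticipate and it is the crux of the proof.

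Your part (b) has the same gap in a sharper form, and here the argument you write down is actually not valid as stated. Young's inequality applied to the weak formulation gives $\tfrac{C_{N,s}}{2}\mathcal A(u_n,u_n) + \tfrac{n}{2}\|u_n\|_{L^2(\Omc,\mu)}^2 \le \tfrac{n}{2}\|z\|_{L^2(\Omc,\mu)}^2$, so the Gagliardo seminorm $\mathcal A(u_n,u_n)$ is only bounded by $Cn\|z\|^2$ and grows with $n$. Lemma~\ref{le62} controls $\|u_n\|_{L^2(\Om)}$ by $\mathcal A(u_n,u_n)+\|u_n\|_{L^2(\Omc)}^2$, which therefore gives no uniform $L^2(\RR^N)$ bound; the claim ``combined with the norm equivalence of Lemma~\ref{le62} furnishes a uniform bound on $\{u_n\}$ in $L^2(\RR^N)$'' is incorrect. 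The paper instead first gets $\|u_n\|_{L^2(\Omc)}\le C\|z\|_{L^2(\Omc)}$ directly from the weak formulation and the $n$-dependent coercivity estimate, and then once again invokes the same duality construction (auxiliary Dirichlet problem with datum $\eta$) to obtain $\|u_n\|_{L^2(\Om)}\le C\|z\|_{L^2(\Omc)}$ uniformly, before passing to a weak limit. Once that bound is in hand, your observation that $v\in V$ vanishes in $\Omc$ and annihilates the Robin terms, together with the integration-by-parts formula, does indeed close part (b) exactly as in the paper. So the route to \eqref{eq63} is right, but the uniform a priori bound — the step you call routine — is precisely where the duality argument is needed.
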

 
    \begin{remark}[\bf Convergence to very-weak solution]
        {\rm 
            Notice that Part (a) of Theorem~\ref{thm:approx_dbcp} implies strong 
            convergence to a weak solution (with rate). On the other hand, Part (b)
            ``almost" implies weak convergence to a very-weak solution (we still do not 
            know if $\tilde{u}|_{\Omc} = z$). We emphasize that such an approximation 
            of very-weak solutions using Robin problem, to the best of our knowledge,
            is open even for the classical case $s = 1$ when the boundary function just belongs to $L^2(\pOm)$. 
        }    
    \end{remark}
 
 \begin{proof}[\bf Proof of Theorem \ref{thm:approx_dbcp}]
 (a) Let $z \in W^{s,2}(\Omc)$. Firstly,  recall that under our assumption
 $W^{s,2}(\Omc)\hookrightarrow L^2(\Omc)\hookrightarrow L^2(\Omc,\mu)$. Secondly, consider the system \eqref{eq:Sn-G-Reg}. A weak solution is $u_{n}\in W^{s,2}_{\Om,\kappa}\cap L^2(\Omc)$ such that the identity
  \begin{align}\label{RP-WS}
\frac{C_{N,s}}{2}\int\int_{\RR^{2N}\setminus(\Omc)^2}&\frac{(u_{n}(x)-u_{n}(y))(v(x)-v(y))}{|x-y|^{N+2s}}\;dxdy\notag\\
&+n\int_{\Omc}u_{n} v\;d\mu=n\int_{\Omc} zv\;d\mu,
\end{align}
holds for every $v\in  W_{\Om,\kappa}^{s,2}\cap L^2(\Omc)$. Proceeding as the proof of Proposition \ref{pro-sol-Ro} we can easily deduce that for every $n\in\NN$, there is a unique $u_{n}\in W^{s,2}_{\Om,\kappa}\cap L^2(\Omc)$ satisfying \eqref{RP-WS}.

 For $v,w\in W^{s,2}_{\Om,\kappa}\cap L^2(\Omc)$, we shall let
\begin{align*}
\mathcal E_n(v,w):=\frac{C_{N,s}}{2}\int\int_{\RR^{2N}\setminus(\Omc)^2}\frac{(v(x)-v(y))(w(x)-w(y))}{|x-y|^{N+2s}}\;dxdy+n\int_{\Omc}vw\;d\mu.
\end{align*}
We notice that proceeding as the proof of Lemma \ref{le62} we can deduce that there is a constant $C>0$ such that
\begin{align}\label{Ine-eq-norm}
\frac{C_{N,s}}{2}\int\int_{\RR^{2N}\setminus(\Omc)^2}\frac{|u_{n}(x)-u_{n}(y)|^2}{|x-y|^{N+2s}}\;dxdy+n\int_{\Omc}|u_{n}|^2\;dx
\le C\mathcal E_n(u_n,u_n).
\end{align}

Next, let $u\in W^{s,2}(\RR^N)$ be the weak solution of \eqref{eq:Sd_1} and $v\in W^{s,2}_{\Om,\kappa}\cap L^2(\Omc)$. Using the integration by parts formula \eqref{Int-Part} we get that
\begin{align}\label{IN-I}
\mathcal E_n(u-u_{n},v)=&\int_{\Omega}(-\Delta)^s(u-u_{n})v\;dx+\int_{\Omc}\mathcal N_s(u-u_{n})v\;dx\notag\\
&+n\int_{\Omc}\left(u-u_{n}\right) v\;d\mu\notag\\
=&\int_{\Omega}(-\Delta)^s(u-u_{n})v\;dx +\int_{\Omc}v\mathcal N_su\;dx\notag\\
&-\int_{\Omc}\left(\mathcal N_su_{n}+n\kappa(u_{n}-z)\right)v\;dx\notag\\
=&\int_{\Omc}v\mathcal N_su\;dx.
\end{align}
Taking $v=u-u_{n}$ in \eqref{IN-I} and using \eqref{Ine-eq-norm}, we get that there is a constant $C>0$ (independent of $n$) such that
\begin{align*}
n\|u-u_{n}\|_{L^2(\Omc)}^2&\le \mathcal E_n(u-u_{n},u-u_{n})=\int_{\Omc}(u-u_{n})\mathcal N_su\;dx\\
&\le \|u-u_{n}\|_{L^2(\Omc)}\|\mathcal N_su\|_{L^2(\Omc)}\\
&\le C\|u-u_{n}\|_{L^2(\Omc)} \|u\|_{W^{s,2}(\RR^N)}.
\end{align*}
We have shown that there is a constant $C>0$ (independent of $n$) such that
\begin{align}\label{B1}
\|u-u_{n}\|_{L^2(\Omc)}\le \frac{C}{n}\|u\|_{W^{s,2}(\RR^N)}.
\end{align}
Next, observe that
\begin{align}\label{B1-1}
\|u-u_{n}\|_{L^2(\Omega)}=\sup_{\eta\in L^2(\Omega)}\frac{\left|\int_{\Omega} (u-u_{n})\eta\;dx\right|}{\|\eta\|_{L^2(\Omega)}}.
\end{align}
For any $\eta\in L^2(\Omega)$, let $w\in W_0^{s,2}(\bOm)$ be the weak solution of the Dirichlet problem
\begin{align}\label{edp}
(-\Delta)^sw=\eta\;\;\mbox{ in }\;\Omega,\;\;\;w=0\;\;\mbox{ in }\;\Omc.
\end{align}
It follows from Proposition \ref{prop:weak_Dir} that there is a constant $C>0$ such that
\begin{align}\label{B2}
\|w\|_{W^{s,2}(\RR^N)}\le C\|\eta\|_{L^2(\Omega)}.
\end{align}
Since $w\in W_0^{s,2}(\bOm)$, then using \eqref{IN-I} we have that
\begin{align*}
&\int_{\Omega}(u-u_{n})(-\Delta)^sw\;dx\\
=&\frac{C_{N,s}}{2}\int\int_{\RR^{2N}\setminus(\Omc)^2}\frac{((u-u_{n})(x)-(u-u_{n})(y))(w(x)-w(y))}{|x-y|^{N+2s}}\;dxdy\\
&-\int_{\Omc}(u-u_{n})\mathcal N_sw\;dx\\
=&\mathcal E_n(u-u_{n},w)-\int_{\Omc}(u-u_{n})\mathcal N_sw\;dx\\
=&\int_{\Omc}w\mathcal N_su\;dx-\int_{\Omc}(u-u_{n})\mathcal N_sw\;dx\\
=&-\int_{\Omc}(u-u_{n})\mathcal N_sw\;dx.
\end{align*}
It follows from the preceding identity, \eqref{B1} and \eqref{B2} that
\begin{align}\label{B3}
\left|\int_{\Omega}(u-u_{n})(-\Delta)^sw\;dx\right|=&\left|\int_{\Omc}(u-u_{n})\mathcal N_sw\;dx\right|\notag\\
\le& \|u-u_{n}\|_{L^2(\Omc)}\|\mathcal N_sw\|_{L^2(\Omc)}\notag\\
\le  &\frac{C}{n}\|u\|_{W^{s,2}(\RR^N)}\|w\|_{W^{s,2}(\RR^N)}\notag\\
\le&  \frac{C}{n}\|u\|_{W^{s,2}(\RR^N)}\|\eta\|_{L^2(\Omega)}.
\end{align}
Using \eqref{B1-1} and \eqref{B3} we get that
\begin{align}\label{B4}
\|u-u_{n}\|_{L^2(\Omega)}\le \frac{C}{n}\|u\|_{W^{s,2}(\RR^N)}.
\end{align}
Now the estimate \eqref{es-diff} follows from \eqref{B1} and \eqref{B4}. Observe that it follows from \eqref{es-diff} that $u_{n}\to u$ in $L^2(\RR^N)$ as $n\to\infty$ and this completes the proof of Part (a).\\

(b) Now let $z \in L^2(\Omc)\hookrightarrow L^2(\Omc,\mu)$. Notice that $\{u_{n}\}_{n\in\NN}$ satisfies \eqref{RP-WS}. 
Proceeding as the proof of Lemma \ref{le62} we can deduce that there is a constant $C>0$ (independent of $n$) such that
\begin{align*}
n\|u_n\|_{L^2(\Omc)}^2\le C\mathcal E_n(u_n,u_n)\le nC\|\kappa\|_{L^\infty(\Omc)}\|z\|_{L^2(\Omc)} \|u_n\|_{L^2(\Omc)},
\end{align*}
and this  implies that
\begin{align}\label{weq-2es}
\|u_n\|_{L^2(\Omc)}\le C\|z\|_{L^2(\Omc)}.
\end{align}
Now we proceed as the proof of \eqref{B4}. As in \eqref{B1-1} we have that
\begin{align}\label{wB1-1}
\|u_{n}\|_{L^2(\Omega)}=\sup_{\eta\in L^2(\Omega)}\frac{\left|\int_{\Omega} u_{n}\eta\;dx\right|}{\|\eta\|_{L^2(\Omega)}}.
\end{align}
Let $\eta\in L^2(\Omega)$ and $w\in W_0^{s,2}(\bOm)$  the weak solution of \eqref{edp}.
Since $w\in W_0^{s,2}(\bOm)$, then using \eqref{IN-I} we have that
\begin{align*}
&\int_{\Omega}u_{n}(-\Delta)^sw\;dx\\
=&\frac{C_{N,s}}{2}\int\int_{\RR^{2N}\setminus(\Omc)^2}\frac{(u_{n}(x)-u_{n}(y))(w(x)-w(y))}{|x-y|^{N+2s}}\;dxdy-\int_{\Omc}u_{n}\mathcal N_sw\;dx\\
=&-\int_{\Omc}u_{n}\mathcal N_sw\;dx.
\end{align*}
It follows from the preceding identity, \eqref{weq-2es} and \eqref{B2} that
\begin{align}\label{wB3}
\left|\int_{\Omega}u_{n}(-\Delta)^sw\;dx\right|=&\left|\int_{\Omc}u_{n}\mathcal N_sw\;dx\right|
\le \|u_{n}\|_{L^2(\Omc)}\|\mathcal N_sw\|_{L^2(\Omc)}\notag\\
\le  &C\|z\|_{L^{2}(\Omc)}\|w\|_{W^{s,2}(\RR^N)}.
\end{align}
Using \eqref{weq-2es}, \eqref{wB3} and \eqref{B2} we get that there is a constant $C>0$ (independent of $n$) such that
\begin{align}\label{wB4}
\|u_{n}\|_{L^2(\Omega)}\le C\|z\|_{L^{2}(\Omc)}.
\end{align}
Combing \eqref{weq-2es} and \eqref{wB4} we get that
\begin{align}\label{wB5}
\|u_{n}\|_{L^2(\RR^N)}\le C\|z\|_{L^{2}(\Omc)}.
\end{align}
Hence, the sequence $\{u_{n}\}_{n\in\NN}$ is bounded in $L^2(\RR^N)$. Thus, after a subsequence, if necessary, we have that $u_{n}$ converges weakly to some $\tilde u$ in $L^2(\RR^N)$ as $n\to\infty$.

Using \eqref{RP-WS} we get that for every $v\in V:=\{v\in W_0^{s,2}(\bOm):\; (-\Delta)^sv\in L^2(\Omega)\}$, 
 \begin{align}\label{RP-WS-2}
\frac{C_{N,s}}{2}\int\int_{\RR^{2N}\setminus(\Omc)^2}\frac{(u_{n}(x)-u_{n}(y))(v(x)-v(y))}{|x-y|^{N+2s}}\;dxdy=0 . 
\end{align}
Using the integration by part formula \eqref{Int-Part} we can deduce that
\begin{align}\label{RP-WS-3}
\frac{C_{N,s}}{2}\int\int_{\RR^{2N}\setminus(\Omc)^2}&\frac{(u_{n}(x)-u_{n}(y))(v(x)-v(y))}{|x-y|^{N+2s}}\;dxdy\notag\\
=&\int_{\Omega}u_{n}(-\Delta)^sv\;dx+\int_{\Omc}u_{n}\mathcal N_sv\;dx,
\end{align}
for every $v\in V$. Combing \eqref{RP-WS-2} and \eqref{RP-WS-3} we get that the identity
\begin{align}\label{WS1}
\int_{\Omega}u_{n}(-\Delta)^sv\;dx+\int_{\Omc}u_{n}\mathcal N_sv\;dx=0,
\end{align}
holds for every $v\in V$. Passing to the limit in \eqref{WS1} as $n\to\infty$, we obtain that
\begin{align*}
\int_{\Omega}\tilde u(-\Delta)^sv\;dx+\int_{\Omc}\tilde u\mathcal N_sv\;dx=0,
\end{align*}
for every $v\in V$.  We have shown \eqref{eq63} and the proof is finished.
 \end{proof}

 Toward this end we introduce the regularized fractional Dirichlet control problem 
 \begin{subequations}\label{eq:ncp_reg}
 \begin{equation}\label{eq:Jn_reg}
    \min_{u\in U_R, z\in Z_R} J(u) + \frac{\xi}{2} \|z\|^2_{L^2(\Omc)} ,
 \end{equation}
 subject to the regularized boundary value problem (Robin problem): 
 Find $u_n \in U_R$ solving 
 \begin{equation}\label{eq:Sn_reg}
 \begin{cases}
    (-\Delta)^s u &= 0 \quad \mbox{in } \Om \\
    \mathcal{N}_s u  +n \kappa u &= n \kappa z \quad \mbox{in } \RR^N\setminus \Om ,
 \end{cases}                
 \end{equation}
 and the control constraints 
 \begin{equation}\label{eq:Zn_reg}
    z \in Z_{ad,R} .
 \end{equation}
 \end{subequations}
 Here $Z_R:=L^2(\Omc)$, $Z_{ad,R}$ is a closed and convex subset and $U_R := W^{s,2}_{\Om,\kappa}\cap L^2(\Omc)$.
  We again remark that 
 \eqref{eq:ncp_reg} is nothing but the fractional Robin exterior control problem. 

 \begin{theorem}[\bf Approximation of the Dirichlet control problem]
  The regularized control problem \eqref{eq:ncp_reg} admits a minimizer $(z_n,u(z_n))
  \in Z_{ad,R} \times (W^{s,2}_{\Om,\kappa}\cap L^2(\Omc))$. 
  Let $Z_{R} =W^{s,2}(\Omc)$ and $Z_{ad,R} \subset Z_{R}$ be bounded. 
  Then for any sequence $\{n_\ell\}_{\ell=1}^\infty$ with $n_\ell \rightarrow \infty$,
  there exists a subsequence still denoted by $\{n_\ell\}_{\ell=1}^\infty$ 
  such that     
  $z_{n_\ell} \rightharpoonup \tilde{z}$ in $W^{s,2}(\Omc)$ and 
  $u(z_{n_\ell}) \rightarrow u(\tilde{z})$ in $L^2(\RR^N)$ as $n\rightarrow \infty$ with  
  $(\tilde{z},u(\tilde{z}))$ solving the Dirichlet control problem 
  \eqref{eq:dcp} with $Z_{ad,D}$ replaced by 
  $Z_{ad,R}$. 
 \end{theorem}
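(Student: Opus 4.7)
The existence of a minimizer $(z_n,u(z_n))\in Z_{ad,R}\times(W^{s,2}_{\Om,\kappa}\cap L^2(\Omc))$ at each fixed $n\in\NN$ is obtained by the same direct-method argument as in Theorem~\ref{thm:exit_ncp}, the only modification being the use of the penalty norm $\|\cdot\|_{L^2(\Omc)}$ in place of $\|\cdot\|_{L^2(\Omc,\mu)}$; both norms are weakly lower semicontinuous on their respective spaces. Because $Z_{ad,R}$ is bounded, convex and closed in the reflexive Hilbert space $W^{s,2}(\Omc)$, the sequence $\{z_n\}$ admits (after passing to a subsequence) a weak limit $z_{n_\ell}\rightharpoonup\tilde z$ in $W^{s,2}(\Omc)$ with $\tilde z\in Z_{ad,R}$ by weak closedness of closed convex sets.

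The next step is to establish the strong convergence $u(z_{n_\ell})\to u_D(\tilde z)$ in $L^2(\RR^N)$, where $u_D(\tilde z)\in W^{s,2}(\RR^N)$ denotes the unique weak Dirichlet solution with datum $\tilde z$ (Proposition~\ref{prop:weak_Dir}). Let $w_{n_\ell}$ be the weak Robin solution of \eqref{eq:Sn-G-Reg} at level $n_\ell$ with the \emph{fixed} datum $\tilde z$. Theorem~\ref{thm:approx_dbcp}(a) gives $\|w_{n_\ell}-u_D(\tilde z)\|_{L^2(\RR^N)}\le C n_\ell^{-1}\|u_D(\tilde z)\|_{W^{s,2}(\RR^N)}\to 0$. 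Set $v_{n_\ell}:=u(z_{n_\ell})-w_{n_\ell}$; by linearity, $v_{n_\ell}$ is the weak Robin solution at level $n_\ell$ with zero forcing and boundary datum $z_{n_\ell}-\tilde z$. Testing the weak form against $v_{n_\ell}$, using that $d\mu=\kappa\,dx$ is supported in the compact set $K:=\mathrm{supp}(\kappa)\subset\Omc$, and repeating the $L^2(\Omc)$ a priori estimate from the proof of Theorem~\ref{thm:approx_dbcp}(b) combined with the $L^2(\Om)$ duality argument used there (i.e.\ testing against the Dirichlet solution of $(-\Delta)^s w=\eta$ with $w=0$ on $\Omc$, which is admissible in the Robin weak form and for which the right-hand side vanishes), one obtains
\begin{equation*}
\|v_{n_\ell}\|_{L^2(\RR^N)}\le C\|z_{n_\ell}-\tilde z\|_{L^2(K)}
\end{equation*}
with $C$ independent of $n_\ell$. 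The Rellich--Kondrachov compact embedding $W^{s,2}(\Omc)\hookrightarrow L^2(K)$ (valid because $K$ is compact in $\Omc$) upgrades the weak convergence in $W^{s,2}(\Omc)$ to strong convergence in $L^2(K)$, so $v_{n_\ell}\to 0$ in $L^2(\RR^N)$ and hence $u(z_{n_\ell})\to u_D(\tilde z)=:u(\tilde z)$ strongly in $L^2(\RR^N)$.

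To identify $(\tilde z,u(\tilde z))$ as a Dirichlet minimizer over $Z_{ad,R}$, I would pass to the limit in the pointwise optimality of $(z_{n_\ell},u(z_{n_\ell}))$. Given any competitor $z\in Z_{ad,R}$ with associated level-$n_\ell$ Robin state $u_R^{(n_\ell)}(z)$, the minimizing property at level $n_\ell$ gives
\begin{equation*}
J(u(z_{n_\ell}))+\tfrac{\xi}{2}\|z_{n_\ell}\|_{L^2(\Omc)}^2\le J(u_R^{(n_\ell)}(z))+\tfrac{\xi}{2}\|z\|_{L^2(\Omc)}^2.
\end{equation*}
Taking $\liminf$ and using (i) lower semicontinuity of $J$ together with the strong $L^2(\Om)$ convergence from the preceding step, (ii) weak lower semicontinuity of $\|\cdot\|_{L^2(\Omc)}^2$ along $z_{n_\ell}\rightharpoonup\tilde z$ in $W^{s,2}(\Omc)\hookrightarrow L^2(\Omc)$, and (iii) continuity of $J$ together with the strong convergence $u_R^{(n_\ell)}(z)\to u_D(z)$ supplied by Theorem~\ref{thm:approx_dbcp}(a) applied to the fixed datum $z$, one concludes
\begin{equation*}
J(u(\tilde z))+\tfrac{\xi}{2}\|\tilde z\|_{L^2(\Omc)}^2\le J(u_D(z))+\tfrac{\xi}{2}\|z\|_{L^2(\Omc)}^2\quad\forall z\in Z_{ad,R},
\end{equation*}
which is the desired Dirichlet optimality. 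The main obstacle lies in the second step: turning mere weak convergence $z_{n_\ell}\rightharpoonup\tilde z$ in $W^{s,2}(\Omc)$ into strong convergence of the Robin states in $L^2(\RR^N)$ with constants uniform in $n_\ell$. This hinges on the fact that the Robin datum couples into the state equation only through the compactly supported weight $\kappa$, so that the Rellich--Kondrachov theorem can be invoked on $K$ and no global compactness in $\Omc$ is needed.
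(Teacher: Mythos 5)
Your proof is correct and follows the same overall strategy as the paper's, but it fills in a real gap that the paper glosses over and makes an implicit hypothesis explicit. The paper establishes the convergence $u(z_{n_\ell})\to\tilde u$ by invoking ``following the proof of Theorem~\ref{thm:approx_dbcp}(a)'', but that theorem concerns a \emph{fixed} exterior datum $z$, whereas here the datum $z_{n_\ell}$ changes with $n_\ell$; the paper does not explain why the rate $\frac{C}{n_\ell}\|u_D(z_{n_\ell})\|_{W^{s,2}(\RR^N)}$ still closes. Your decomposition $u(z_{n_\ell})=w_{n_\ell}+v_{n_\ell}$, with $w_{n_\ell}$ the Robin solution at level $n_\ell$ for the frozen datum $\tilde z$ and $v_{n_\ell}$ the Robin solution for the difference $z_{n_\ell}-\tilde z$, resolves this cleanly: Theorem~\ref{thm:approx_dbcp}(a) applies verbatim to $w_{n_\ell}$, while the $n$-uniform stability estimate from the proof of Theorem~\ref{thm:approx_dbcp}(b) (testing against the Dirichlet lift $w$ of $\eta$, which kills the $\kappa$-terms) gives $\|v_{n_\ell}\|_{L^2(\RR^N)}\le C\|z_{n_\ell}-\tilde z\|_{L^2(K)}$, and the compact embedding $W^{s,2}(\Omc)\hookrightarrow L^2(K)$ (valid because $K=\mathrm{supp}(\kappa)$ is compact in $\Omc$ by Assumption~\ref{asum}) upgrades the weak $W^{s,2}(\Omc)$ convergence to strong $L^2(K)$ convergence. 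This is precisely the missing ingredient.

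Two further remarks. First, your final step takes an arbitrary competitor $z\in Z_{ad,R}$ and passes to the limit, while the paper fixes a known minimizer $(z',u')$ of the Dirichlet problem and closes a chain of inequalities; the two are equivalent and both require the same convergences. Second, both you and the paper need $J(u_R^{(n_\ell)}(z))\to J(u_D(z))$ (respectively $\liminf j(z',u'_{n_\ell})=j(z',u')$), which requires continuity of $J$ along strongly convergent sequences, not merely weak lower semicontinuity as stated in Theorem~\ref{thm:exit_ncp}; you correctly flag this as ``continuity of $J$'' in item (iii), whereas the paper leaves it implicit. In the intended applications $J$ is a quadratic tracking functional, so this is harmless, but it is worth noting that the hypothesis on $J$ in the theorem is slightly weaker than what the proof actually uses.
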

 
 \begin{proof}
  Since the regularized control problem \eqref{eq:ncp_reg} is nothing but 
  the Robin control problem therefore existence of minimizers
  follows by directly using  Theorem~\ref{thm:exit_ncp}.
  Following the 
  proof of Theorem~\ref{thm:exit_ncp} and using the fact that 
  $Z_{ad,R}$ is a bounded subset of the reflexive Banach space 
  $W^{s,2}(\Omc)$, after a subsequence, if necessary, 
  we have that $z_{n_\ell} \rightharpoonup \tilde{z}$ in 
  $W^{s,2}(\Omc)$ as $n_\ell\to\infty$.
  Now since $Z_{ad,R}$ is closed and convex, then it is weakly   
  closed. Thus $\tilde{z} \in Z_{ad,R}$. 
 
 Following the proof of Theorem~\ref{thm:approx_dbcp} (a)
 there exists a subsequence $\{u_{n_\ell}\}$ such that 
 $u_{n_\ell} \rightarrow \tilde{u}$  in $L^2(\RR^N)$ as $n_\ell \rightarrow \infty$.  
 Combining this convergence with the aforementioned convergence of $z_{n_\ell}$ 
 we conclude that $(\tilde{z},\tilde{u}) \in Z_{ad,R} \times W^{s,2}(\RR^N)$ 
 solves the Dirichlet exterior value problem \eqref{eq:Sd}. 
 
 It then remains to show that $(\tilde{z},\tilde{u})$ is a minimizer of \eqref{eq:dcp}.
 Let $(z',u')$ be any minimizer of \eqref{eq:dcp}. Let us consider the regularized 
 state equation \eqref{eq:Sn_reg} but with boundary datum $z'$. We denote the
 solution of the resulting state equation by $u'_{n_\ell}$. By using the same
 limiting argument as above we can select a subsequence such that 
 $u'_{n_\ell} \rightarrow u'$ in $L^2(\RR^N)$ as $n \rightarrow \infty$. 
 Letting $j(z,u) := J(u) + \frac{\xi}{2} \|z\|^2_{L^2(\Omc)}$, it then follows 
 that 
  \[
   j(z',u') 
    \le j(\tilde{z},\tilde{u}) 
    \le \liminf_{n\rightarrow\infty} j(z_{n_\ell},u_{n_\ell}) 
    \le \liminf_{n\rightarrow\infty} j(z',u_{n_\ell}') 
    = j(z',u')
  \] 
 where the second inequality is due to the weak-lower semicontinuity of $J$. 
 The third inequality is due to the fact that $\{(z_{n_\ell},u_{n_\ell})\}$ 
 is a sequence of minimizers for \eqref{eq:ncp_reg}. This is what we needed 
 to show. 
 \end{proof}

We conclude this section by writing the stationarity system 
corresponding to \eqref{eq:ncp_reg}: Find  
$(z,u,p) \in Z_{ad,R} \times (W^{s,2}_{\Om,\kappa} \cap L^2(\Omc))\times  (W^{s,2}_{\Om,\kappa}\cap L^2(\Omc))$ such that 

 \begin{equation}\label{eq:statptR}
 \begin{cases}
\displaystyle \mathcal E(u,v)  &= \int_{\Omc} n\kappa z v\;dx, \\
\displaystyle\mathcal E(w,p)    &= \int_{\Om}J'(u)w \;dx ,\\
\displaystyle   \int_{\RR^N\setminus\Om} (n\kappa p+\xi z) (\widetilde{z}-z)\;dx &\ge 0 , 
 \end{cases} 
 \end{equation}
for all $(\widetilde{z},v,w) \in Z_{ad} \times (W^{s,2}_{\Om,\kappa} \cap L^2(\Omc))\times  (W^{s,2}_{\Om,\kappa}\cap L^2(\Omc))$. 
%
%
%

\section{Numerics}

The purpose of this section is to introduce numerical approximation of the problems
we have considered so far. In Subsection \ref{s:RobinNum} we begin with a finite 
element approximation of the Robin problem \eqref{eq:Sn-G-Reg} which is the same 
as the regularized Dirichlet problem. We approximate the Dirichlet problem using
the Robin problem. Next in Subsection~\ref{s:source} we introduce an external
source identification problem where we clearly see the difference between
the nonlocal case and the classical case ($s\sim 1$).  Finally, Subection~\ref{s:dcpNum} 
is devoted to optimal control problems.

\subsection{Approximation of nonhomogeneous Dirichlet problem via Robin problem}
\label{s:RobinNum}

In view of Theorem~\ref{thm:approx_dbcp} we can approximate the Dirichlet 
problem with the help of the Robin (regularized Dirichlet) problem \eqref{eq:Sn-G-Reg}.
Therefore we begin by introducing a discrete scheme for the Robin problem. 
Let $\widetilde{\Om}$ be a large enough open bounded set containing $\Om$. We consider a 
conforming simplicial triangulation of $\Om$ and $\widetilde\Om \setminus \Om$ such
that the resulting partition remains admissible. We shall assume that the support of 
the datum $z$ and $\kappa$ is contained in $\widetilde\Om\setminus\Om$. We let our finite element space 
$\mathbb{V}_h$ (on $\widetilde\Om$) to be a set of standard continuous piecewise linear 
functions. Then the discrete (weak) version of \eqref{eq:Sn_reg} with nonzero right-hand-side is given by: Find $u_{h}
\in \mathbb{V}_h$ such that 
 \begin{align}\label{eq:discR}
 \begin{aligned}
  \int\int_{\RR^{2N}\setminus(\Omc)^2}&\frac{(u_{h}(x)-u_{h}(y))(v(x)-v(y))}{|x-y|^{N+2s}}\;dxdy +\int_{\widetilde\Om\setminus\Om} n\kappa u_{h} v\;dx \\
   &= \langle f,v\rangle_{(W^{s,2}_{\Om,\kappa}\cap L^2(\Omc))^\star,W^{s,2}_{\Om,\kappa}\cap L^2(\Omc)} + \int_{\widetilde\Om \setminus \Om} n\kappa z v\;dx \quad \forall v \in \mathbb{V}_h . 
 \end{aligned}   
 \end{align}
We approximate the double integral over $\RR^{2N}\setminus(\Omc)^2$ by using the approach
from \cite{GA:JP15,acosta2017short}. The remaining integrals are computed using quadrature
which is accurate for polynomials of degree less than and equal to 4. 

We next consider an example that has been taken from \cite{acosta2017finite}. 
Let $\Om = B_0(1/2) \subset \RR^2$ then our goal is to find 
$u$ solving
 \begin{align*}
  (-\Delta)^s u &= 2 \quad \mbox{in } \Om \\
      u(\cdot) &= \frac{2^{-2s}}{\Gamma(1+s)^{2}} \left(1-|\cdot|^2\right)_{+}^s \quad  \mbox{in } \Omc . 
 \end{align*}
The exact solution in this case is given by  
 \[
  u(x) = u_1(x) + u_2(x) = \frac{2^{-2s}}{\Gamma(1+s)^{2}} 
          \left( \left(1-|x|^2\right)_{+}^s + \left(\frac14-|x|^2\right)_{+}^s \right) ,
 \] 
where $u_1$ and $u_2$ solve
 \begin{equation}\label{eq:u1u2}
  \begin{cases}
   (-\Delta)^s u_1 &= 1 \quad \mbox{in } \Om \\
      u_1 &= \frac{2^{-2s}}{\Gamma(1+s)^{2}} \left(1-|\cdot|^2\right)_{+}^s \quad  \mbox{in } \Omc ,
  \end{cases}
  \qquad 
  \begin{cases}
   (-\Delta)^s u_2 &= 1 \quad \mbox{in } \Om \\
      u_2 &= 0 \quad  \mbox{in } \Omc . 
  \end{cases}
 \end{equation}
We let $\widetilde\Om = B_0(1.5)$. We next approximate \eqref{eq:u1u2} using \eqref{eq:discR} and we set $\kappa = 1$. Notice that we use a quasiuniform
mesh. At first we fix $s = 0.5$ and 
Degrees of Freedom (DoFs) to be $\mbox{DoFs} = 2920$. For this 
configuration, we study the $L^2(\Omega)$ error $\|u-u_h\|_{L^2(\Omega)}$
with respect to $n$ in Figure~\ref{eq:fig_rate} (left). As expected 
from Theorem~\ref{thm:approx_dbcp} (a) we observe an approximation rate
of $1/n$.

Next for a fixed $s = 0.5$, we check the stability of our scheme with 
respect to $n$ as we refine the mesh. We have plotted the 
$L^2$-error as we refine the mesh (equivalently increase DOFs) for $n = 1e2, 1e3, 1e4, 1e5$.
We notice that the error remains stable with respect to $n$ and 
we observe the expected rate of convergence with respect to
DoFs \cite{acosta2017finite} 
 \[
  \|u-u_h\|_{L^2(\Om)} \approx (\mbox{DoFs})^{-\frac12} . 
 \] 
In the right panel we have shown the $L^2$-error for a fixed $n = 1e5$ but for 
various $s = 0.2, 0.4, 0.6, 0.8$. In all cases we obtain the expected rate 
of convergence $(\mbox{DoFs})^{-\frac12}$. 

 \begin{figure}[h!]
  \centering
  \includegraphics[width=0.32\textwidth]{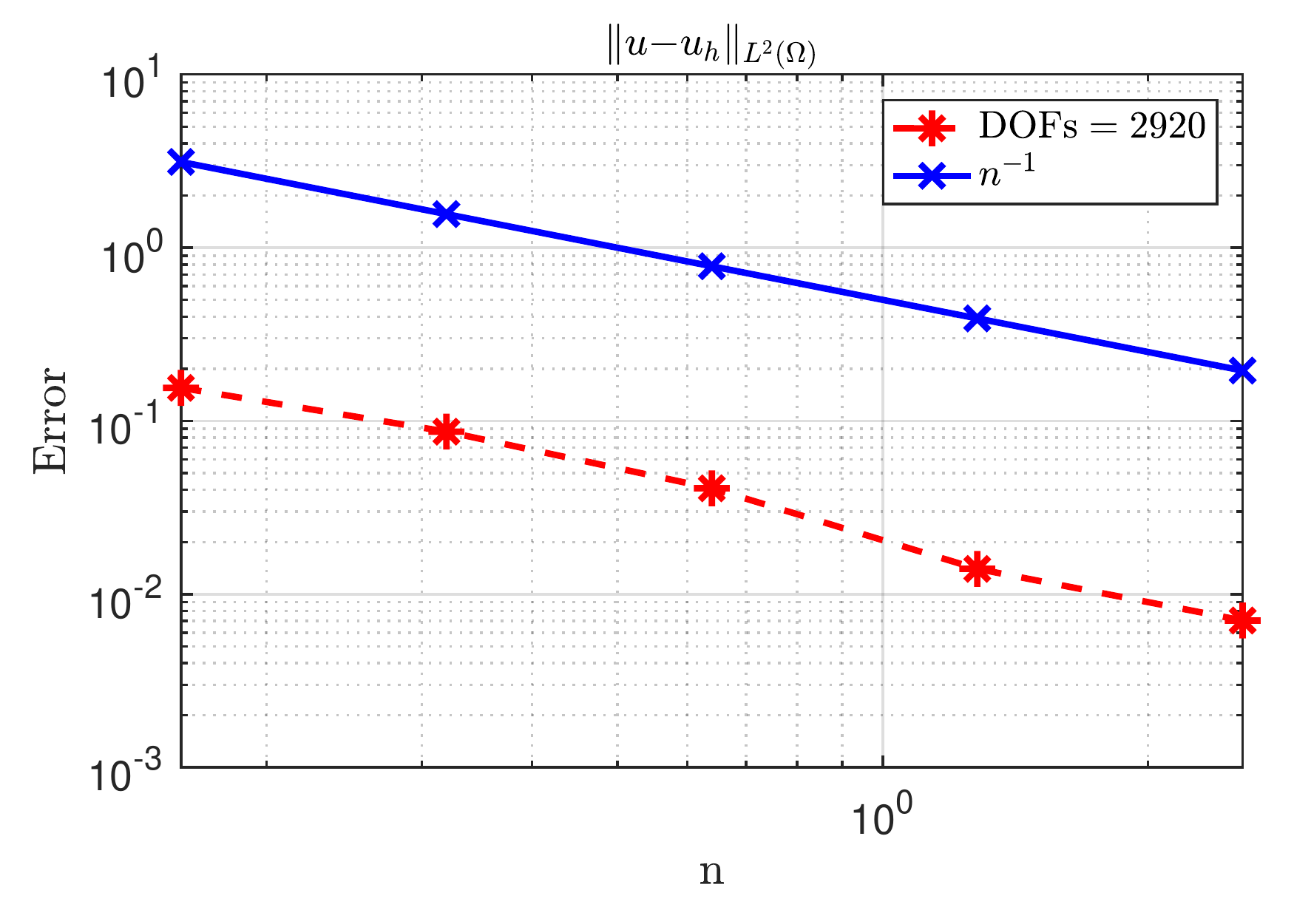}  
  \includegraphics[width=0.32\textwidth]{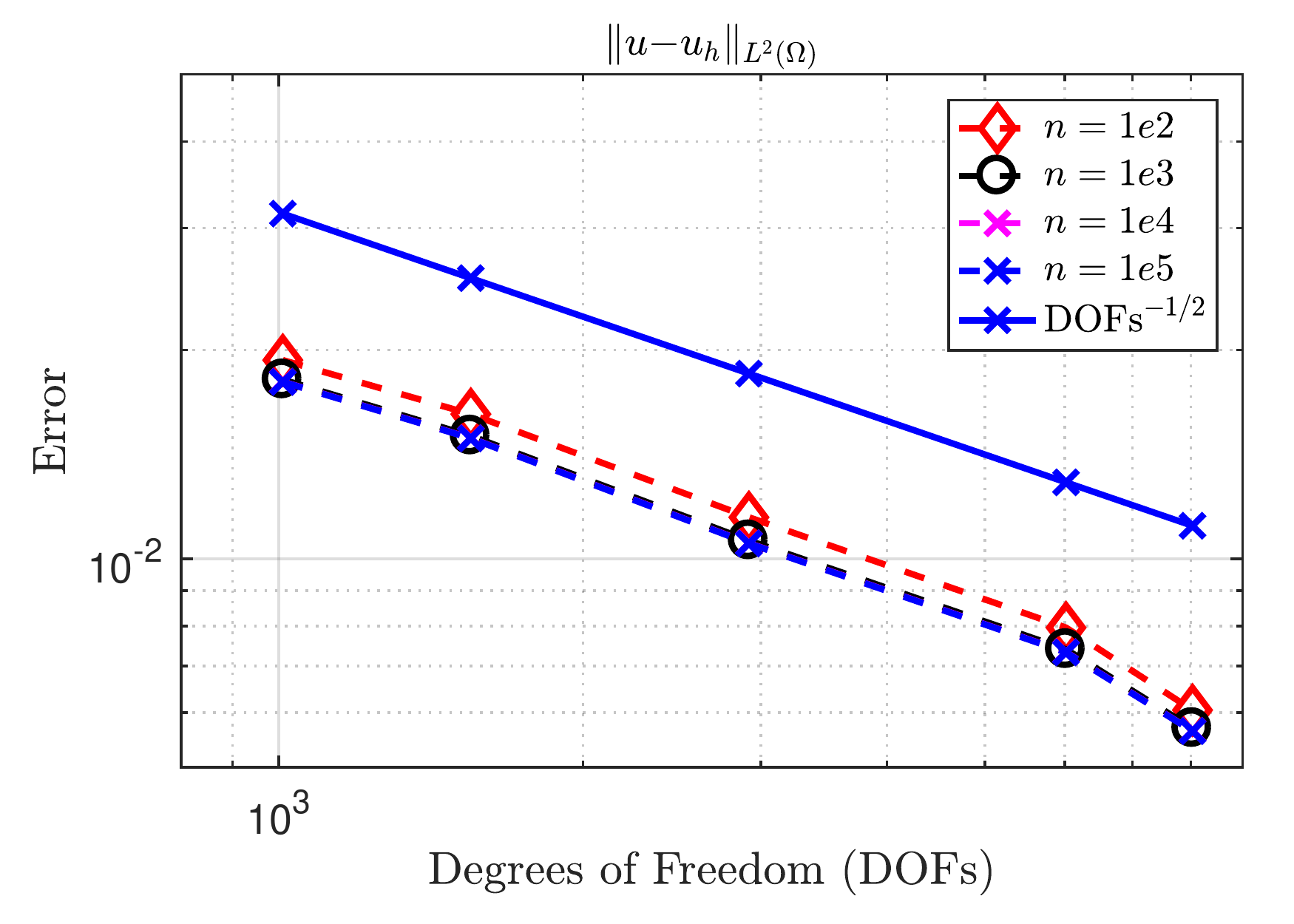}
  \includegraphics[width=0.32\textwidth]{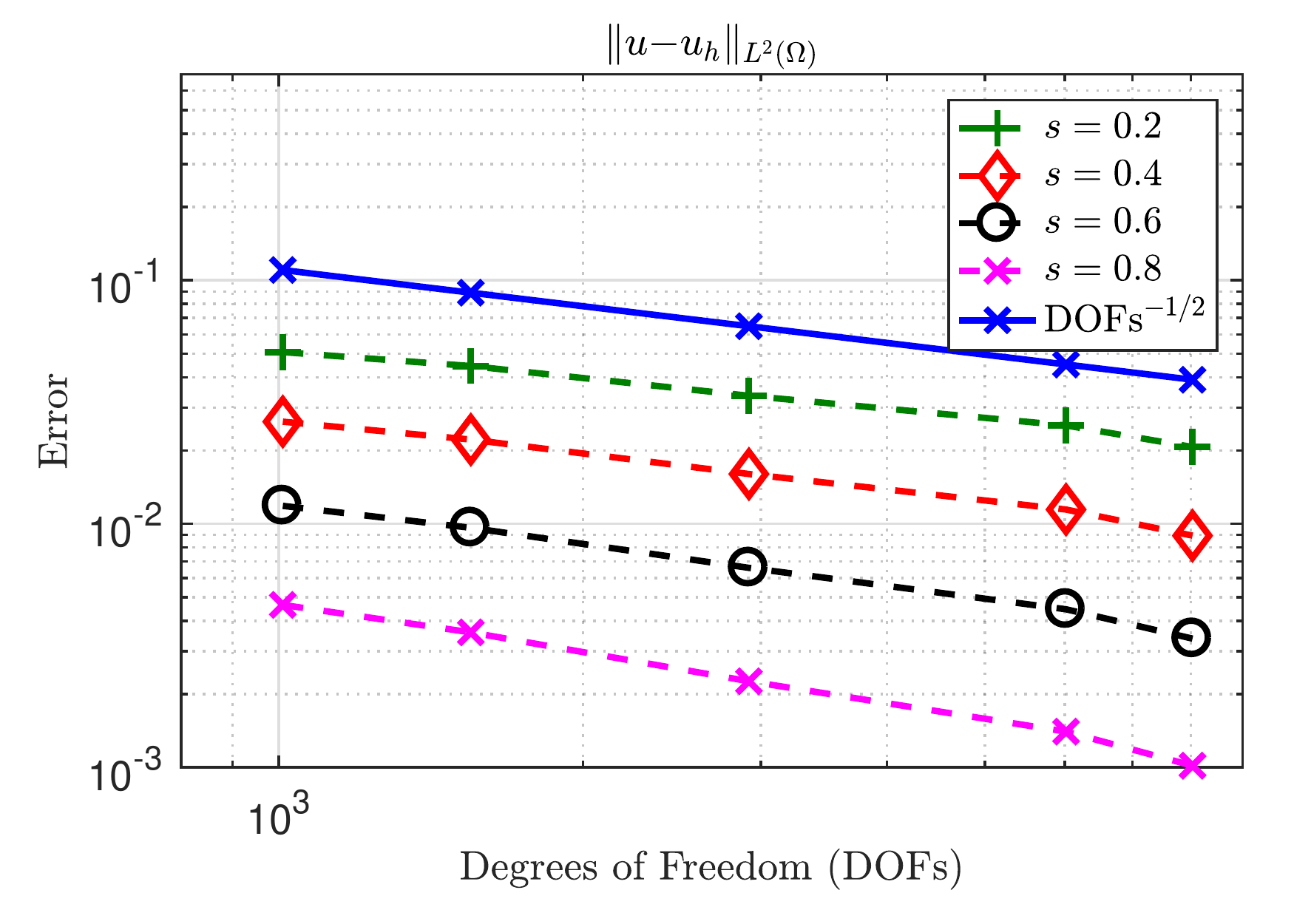}     
  \caption{\label{eq:fig_rate}
   Left panel: Let $s = 0.5$ and $\mbox{DoFs} = 2920$ be fixed. We let $\kappa = 1$ 
   and consider $L^2$-error between actual solution $u$ to the Dirichlet problem
   and its approximation $u_h$ which solves the Robin problem. We have plotted the 
   error with respect to $n$. We observe a rate of $1/n$ which confirms our 
   theoretical result \eqref{es-diff}. Middle panel: Let $s=0.5$ be fixed. For 
   each $n = 1e2, 1e3, 1e4, 1e5$ we have plotted the $L^2$-error with respect to 
   degrees of freedom (DOFs) as we refine the mesh. We notice the error is stable 
   with respect to $n$. In addition, the rate of convergence is 
   $(\mbox{DoFs})^{-\frac12}$ (as expected) and is independent of $n$. 
   Right panel: Let $n = 1e5$ be fixed. We again plot the $L^2$-error 
   with respct to DOFs for various values of $s$. The effective convergence rate 
   is again $(\mbox{DoFs})^{-\frac12}$ and is independent of $s$.}
 \end{figure}

\subsection{External source identification problem}
\label{s:source}

We next consider an inverse problem to identify a source that is located outside
the observation domain $\Om$. The optimality system is as given in \eqref{eq:statptR}
where we have approximated the Dirichlet problem by the Robin problem. 
We use the continuous piecewise linear finite element discretization for all the 
optimization variables: state $(u)$, control $(z)$, and adjoint $(p)$. We choose
our objective function as 
 \[
  J(u,z) = \frac12 \|u-u_d\|^2_{L^2(\Om)} 
   + \frac{\xi}{2} \|z\|^2_{Z_R} ,
 \]
and we let $Z_{ad,R} := \{ z\in Z_R \;:\; z \ge 0, \ \mbox{a.e. in } \widehat{\Om} \}$ 
where $\widehat{\Om}$ is the support set of control $z$ and $\kappa$ that is contained 
in $\widetilde\Om\setminus \Om$. Moreover $u_d : L^2(\Om) \rightarrow \mathbb{R}$ 
is the given data (observations). All the optimization problems below are solved using projected-BFGS
method with Armijo line search.

Our computational setup is shown in Figure~\ref{f:ex2_setup}. The centered square region 
is $\Om = [-0.4, 0.4]^2$ and the region inside the outermost ring is 
$\widetilde\Om = B_0(1.5)$. The smaller square inside 
$\widetilde\Om \setminus \Om$ is $\widehat\Om$ which is the support of source/control. 
The right panel in Figure~\ref{f:ex2_setup} shows a finite element mesh with DoFs = 6103. 
 \begin{figure}[htb] 
  \centering
  \includegraphics[width = 0.22\textwidth]{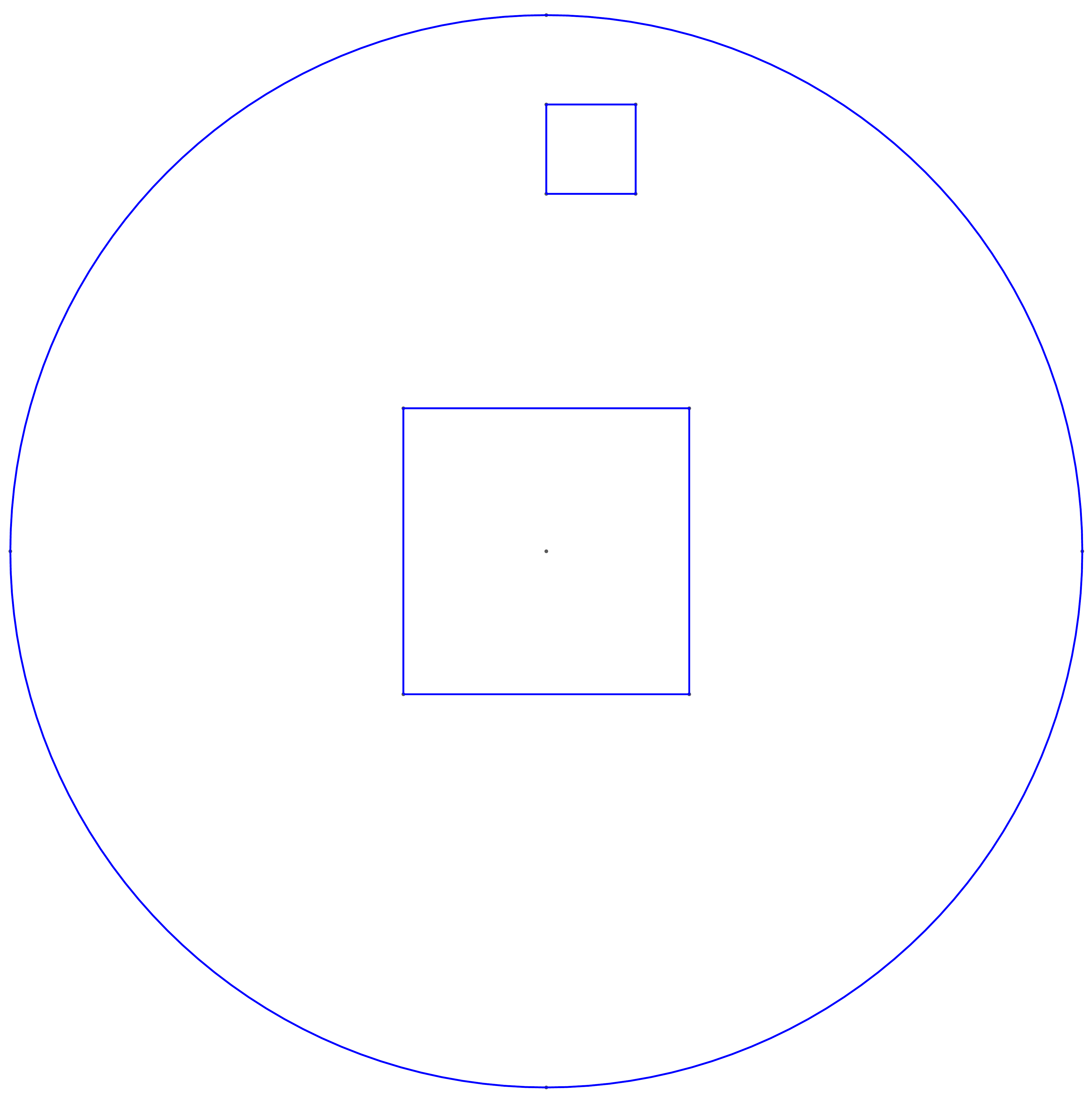}\qquad 
  \includegraphics[width = 0.25\textwidth]{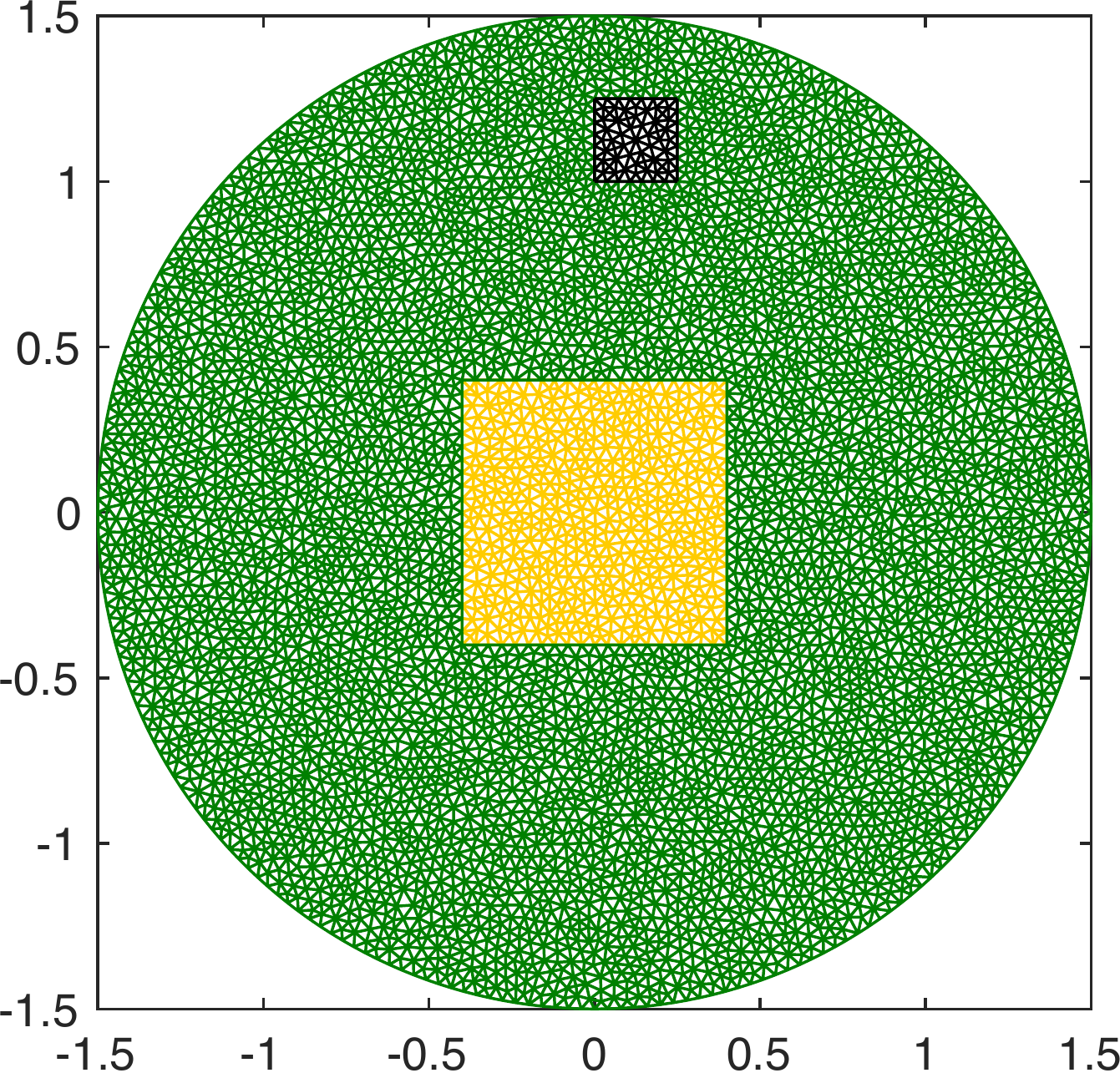}
  \caption{\label{f:ex2_setup}
  Left: computational domain where the inner square is $\Omega$, the region inside the
  outer circle is $\widetilde\Om$ and the outer square inside $\widetilde\Om \setminus
  \Om$ is $\widehat\Om$ which is the region where source/control is supported.
  Right: A finite element mesh.}
 \end{figure}

We define $u_d$ as follows. For $z = 1$, we first solve the state equation for 
$\tilde{u}$ (first equation in \eqref{eq:statptR}). We then add a normally 
distributed random noise with mean zero and standard deviation 0.02 to $\tilde{u}$. 
We call the resulting expression as $u_d$. Furthermore, we set 
$\kappa=1$, and $n = 1e5$. 

Our goal is then to identify the source $\bar{z}_h$. 
In Figure~\ref{f:ex4_1}, we first show the behavior of optimal $\bar{z}_h$ 
for different values of the regularization parameter 
$\xi = 1e-1, 1e-2, 1e-4, 1e-8, 1e-10$. As expected the larger the value of 
$\xi$, the smaller the magnitude of $\bar{z}_h$, and this behavior saturates 
at $\xi = 1e-8$. 
    \begin{figure}[htb]
        \includegraphics[width=0.3\textwidth]{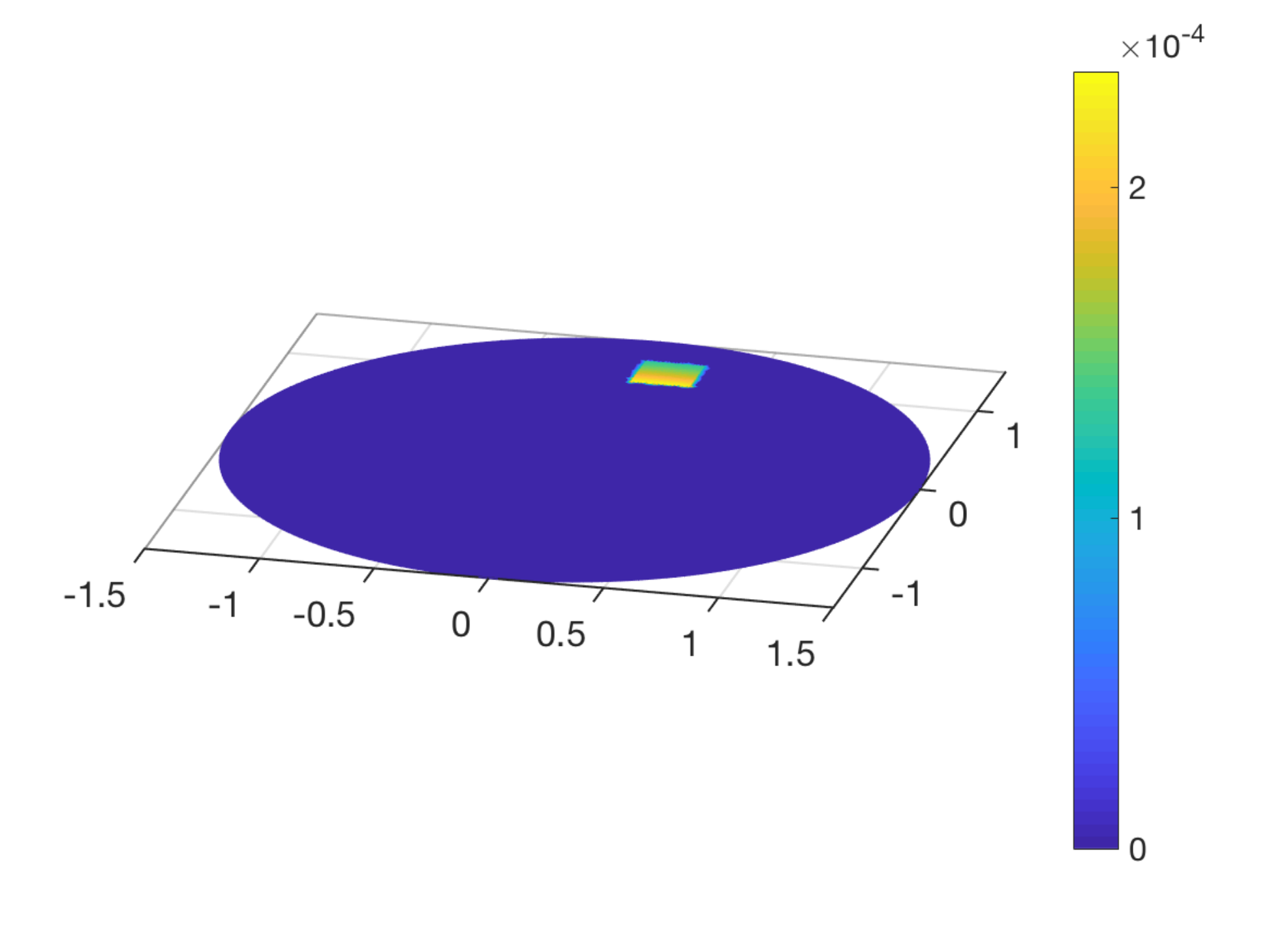}
        \includegraphics[width=0.3\textwidth]{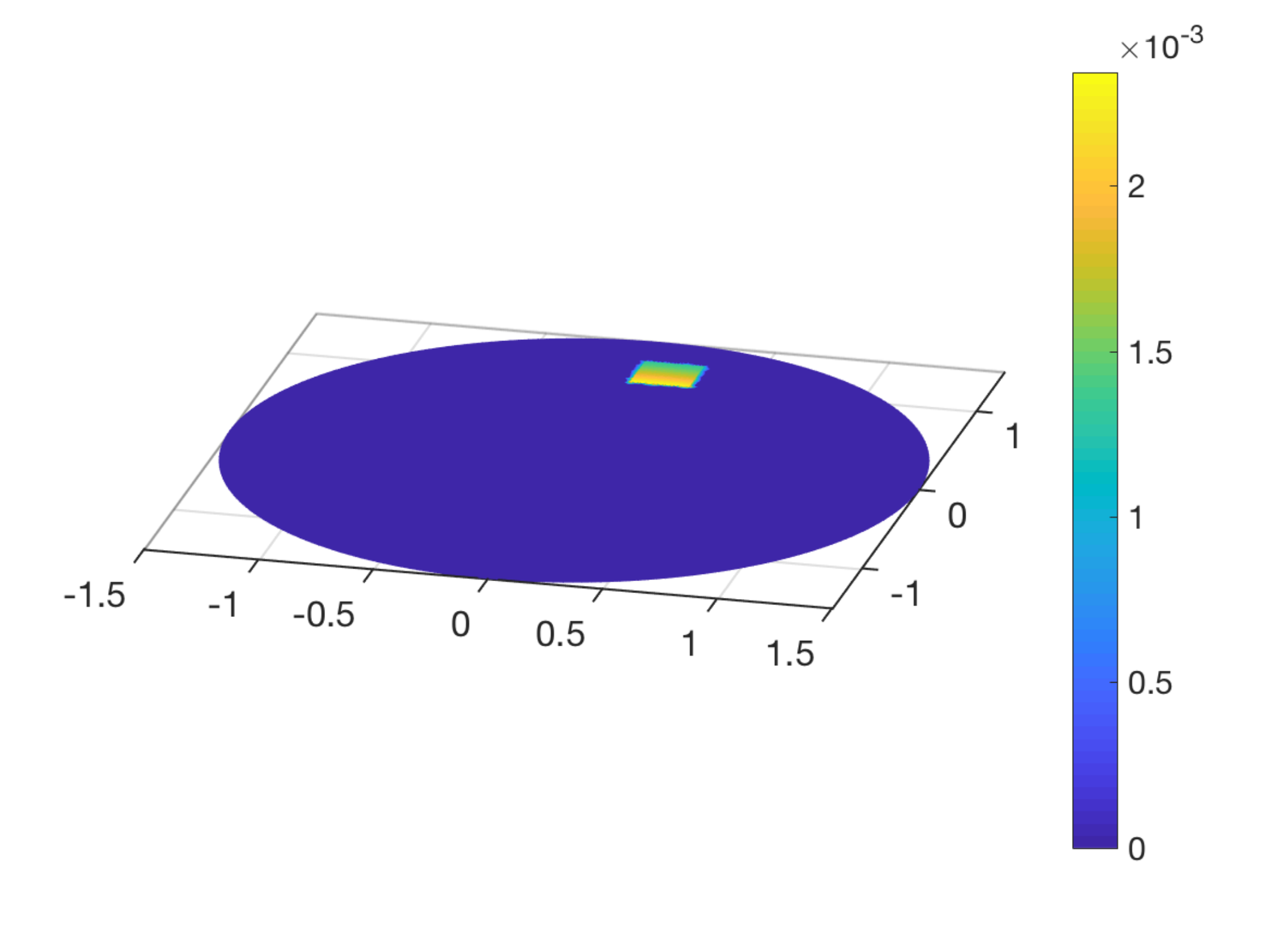}
        \includegraphics[width=0.3\textwidth]{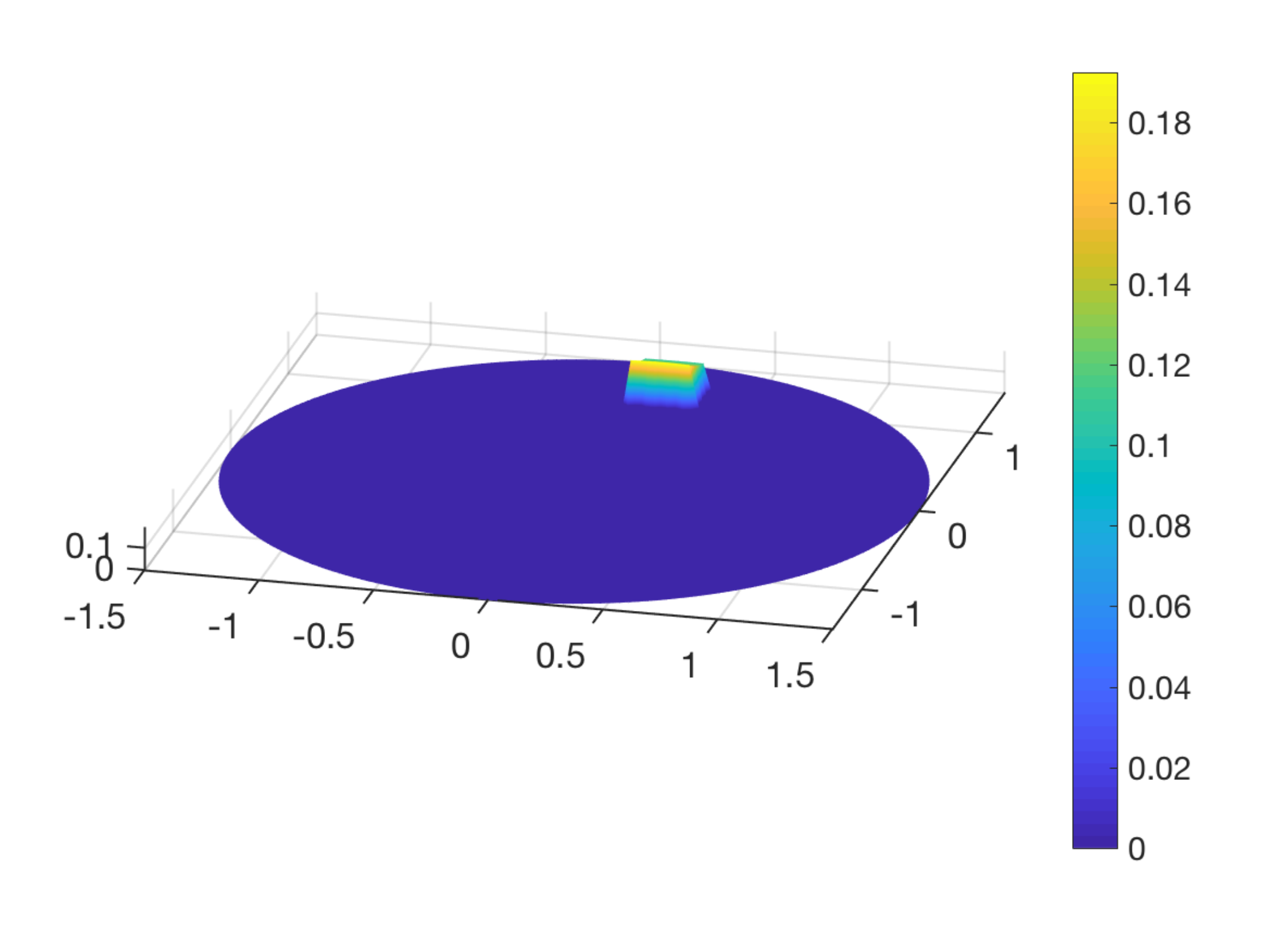}        
        \includegraphics[width=0.3\textwidth]{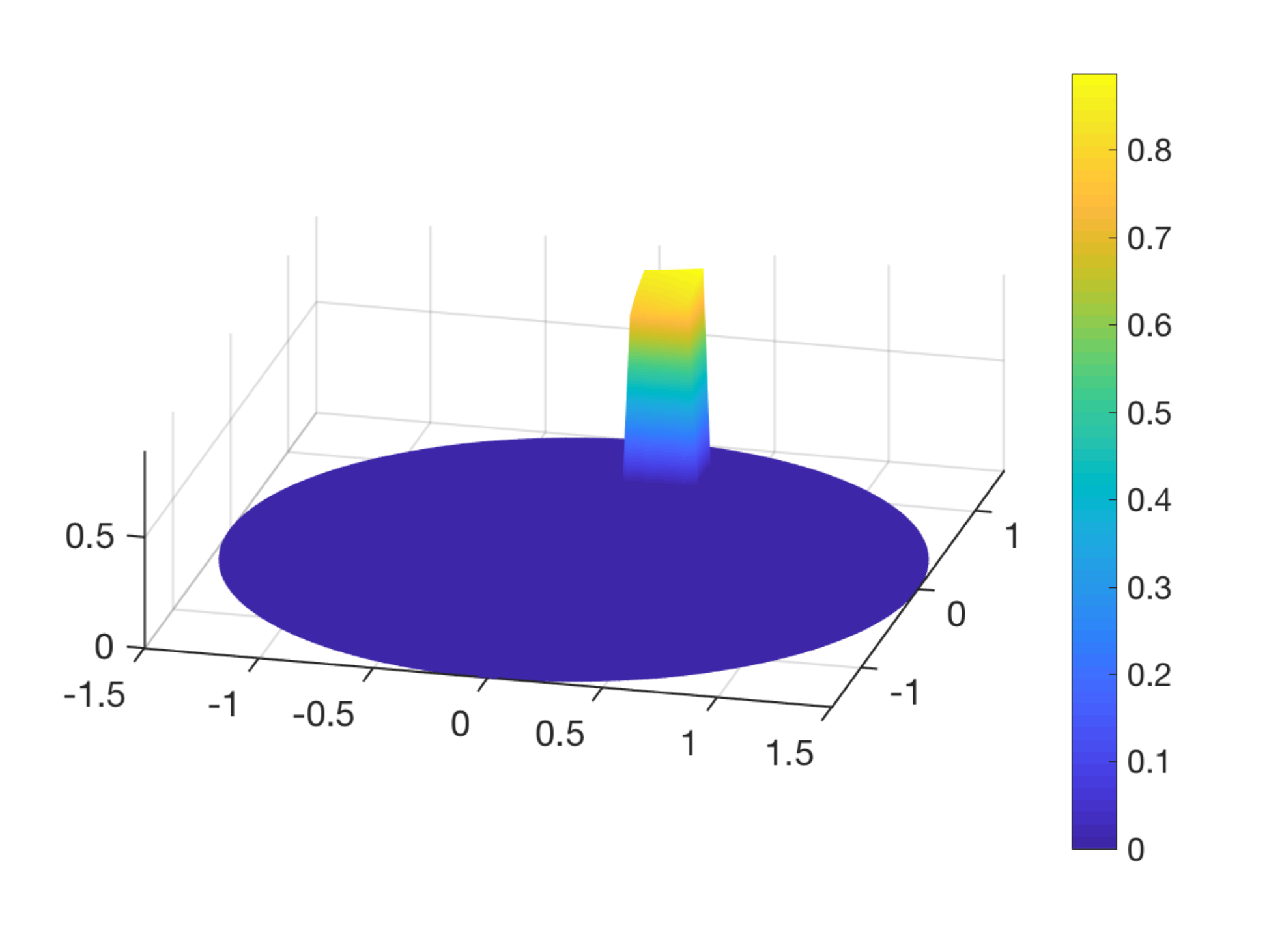}        
        \includegraphics[width=0.3\textwidth]{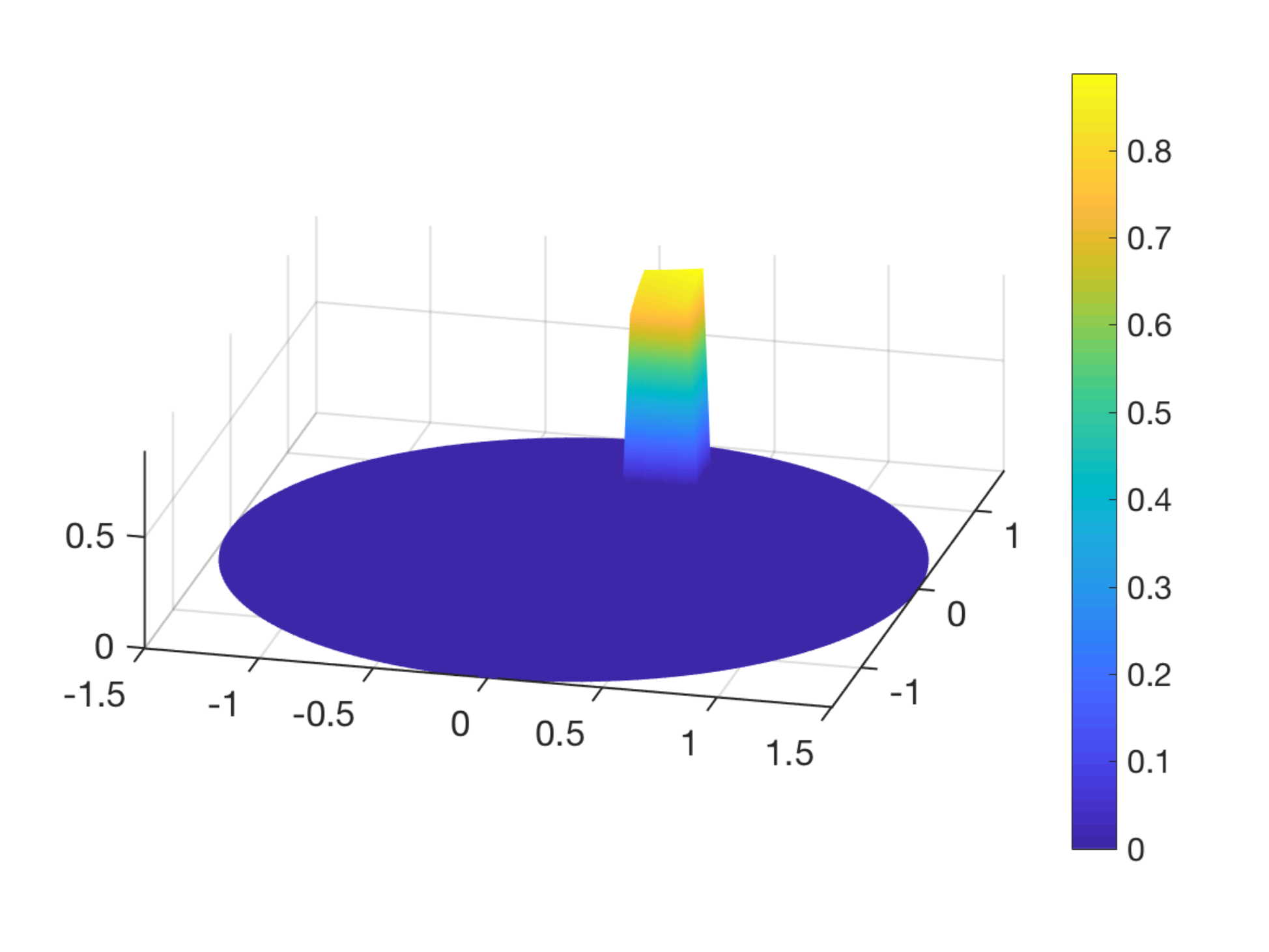}                
        \caption{\label{f:ex4_1}
        External source identificaiton problem. The panels show the
        behavior of $\bar{z}_h$ with repsect to the regularization parameter: top row from 
        left to right $\xi = 1e-1, 1e-2, 1e-4$; bottom row from left to right: $\xi = 1e-8,
        1e-10$. As expected the larger $\xi$, the smaller the magnitude of $\bar{z}_h$,
        but it saturates at $\xi = 1e-8$.}
    \end{figure}

Next, for a fixed $\xi = 1e-8$, Figure~\ref{f:ex4_2} shows the optimal $\bar{z}_h$ for $s = 0.1, 0.6, 0.7, 0.8, 0.9$. We notice that for large $s$, $\bar{z}_h \equiv 0$. This is 
expected as larger the $s$ is, the more close we are to the classical Poisson case and we know that we cannot impose external condition in that case. 

    \begin{figure}[htb]
        \includegraphics[width=0.3\textwidth]{figures/control/ex_4/noise_std02_lamb08/s01/ctrl}        
        \includegraphics[width=0.3\textwidth]{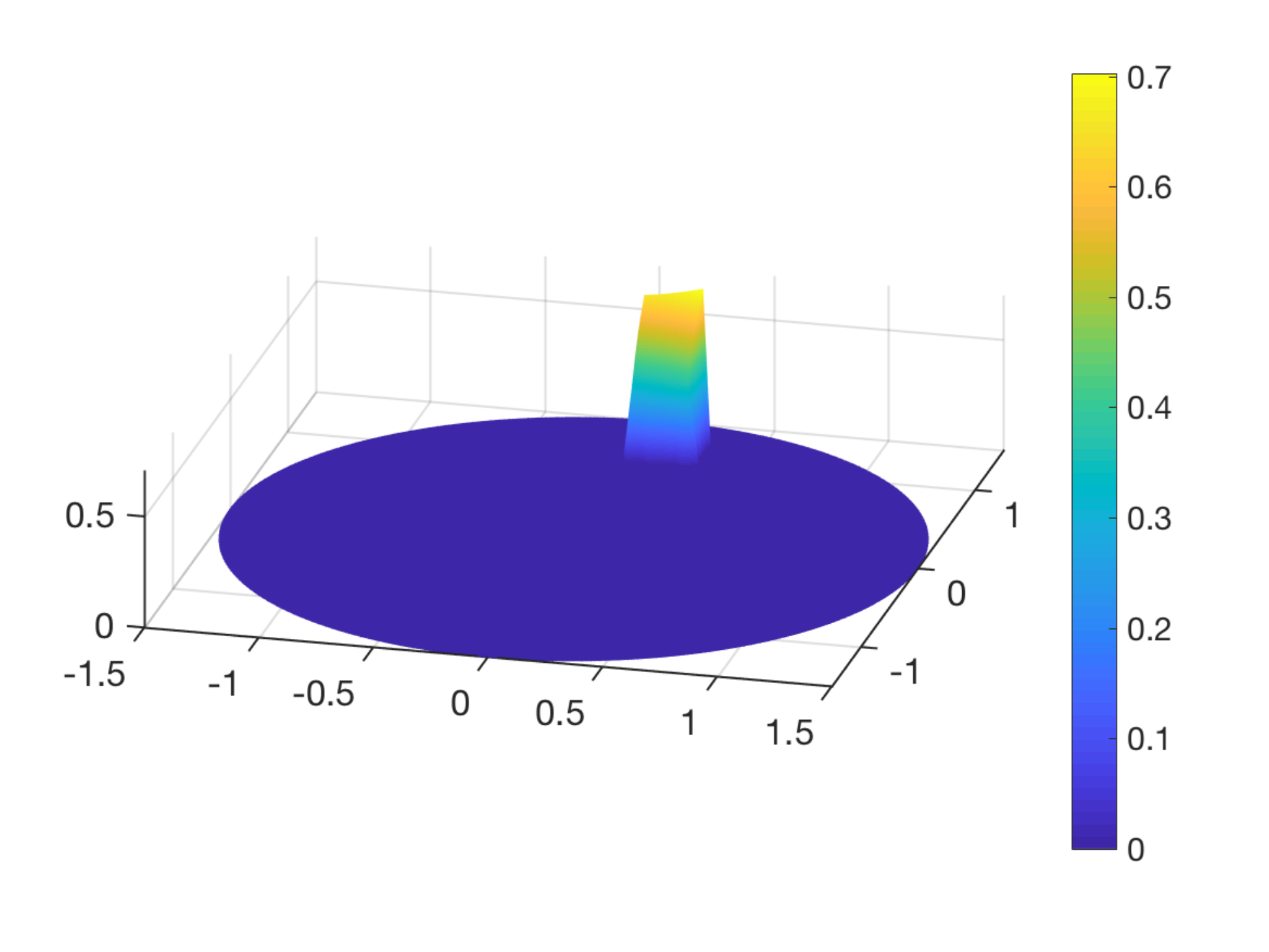}                        
        \includegraphics[width=0.3\textwidth]{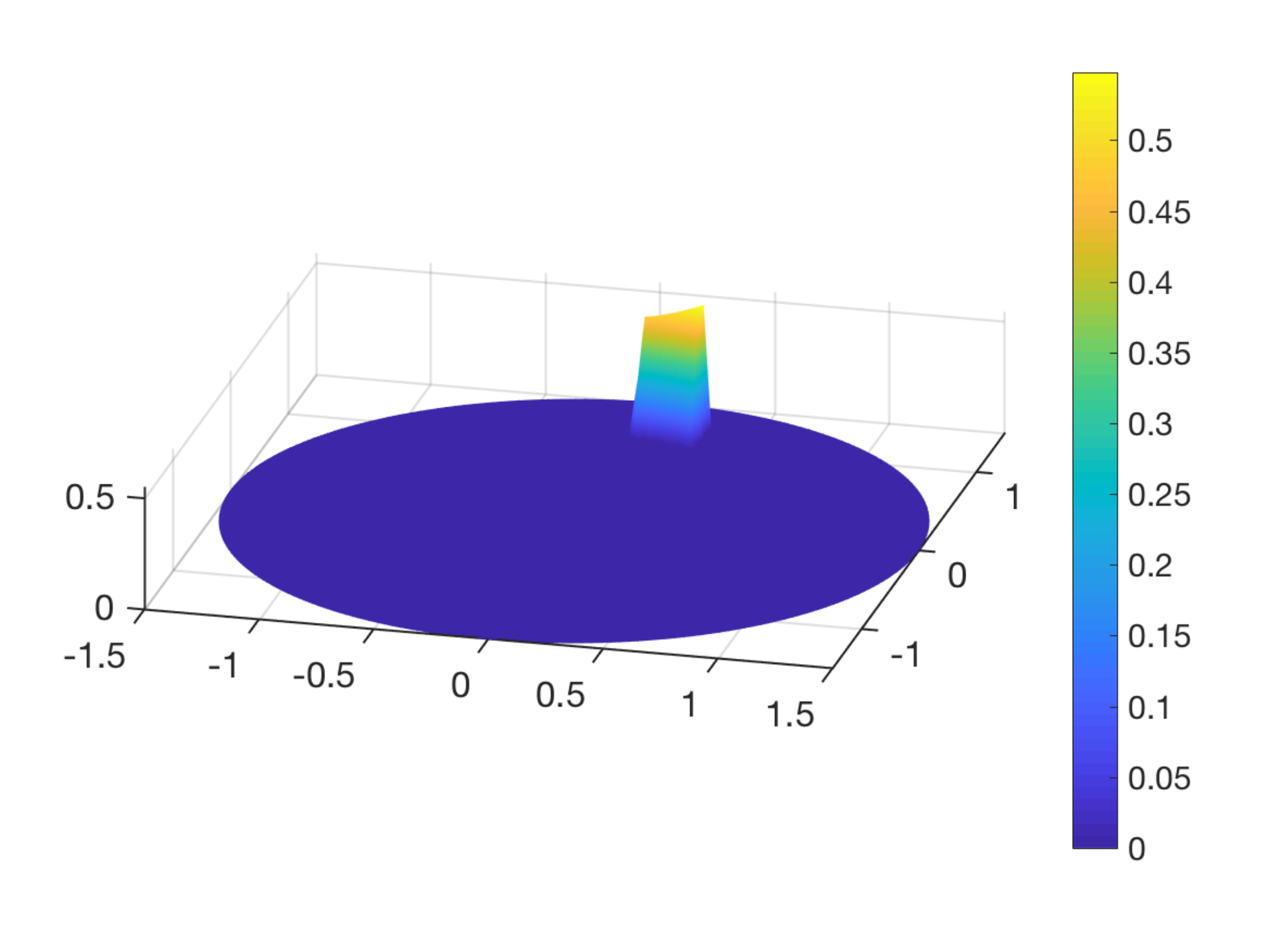}                
        \includegraphics[width=0.3\textwidth]{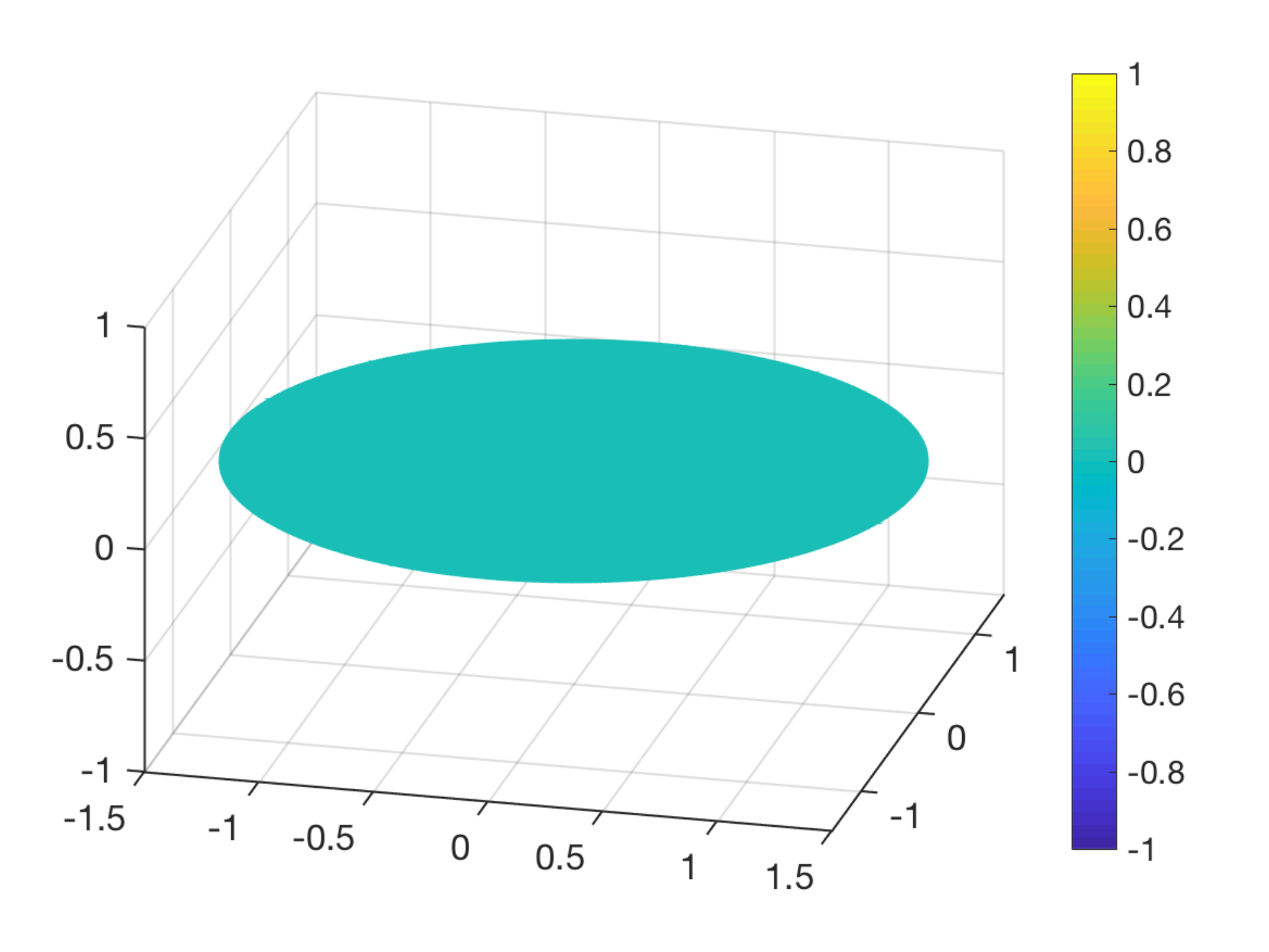}                
        \includegraphics[width=0.3\textwidth]{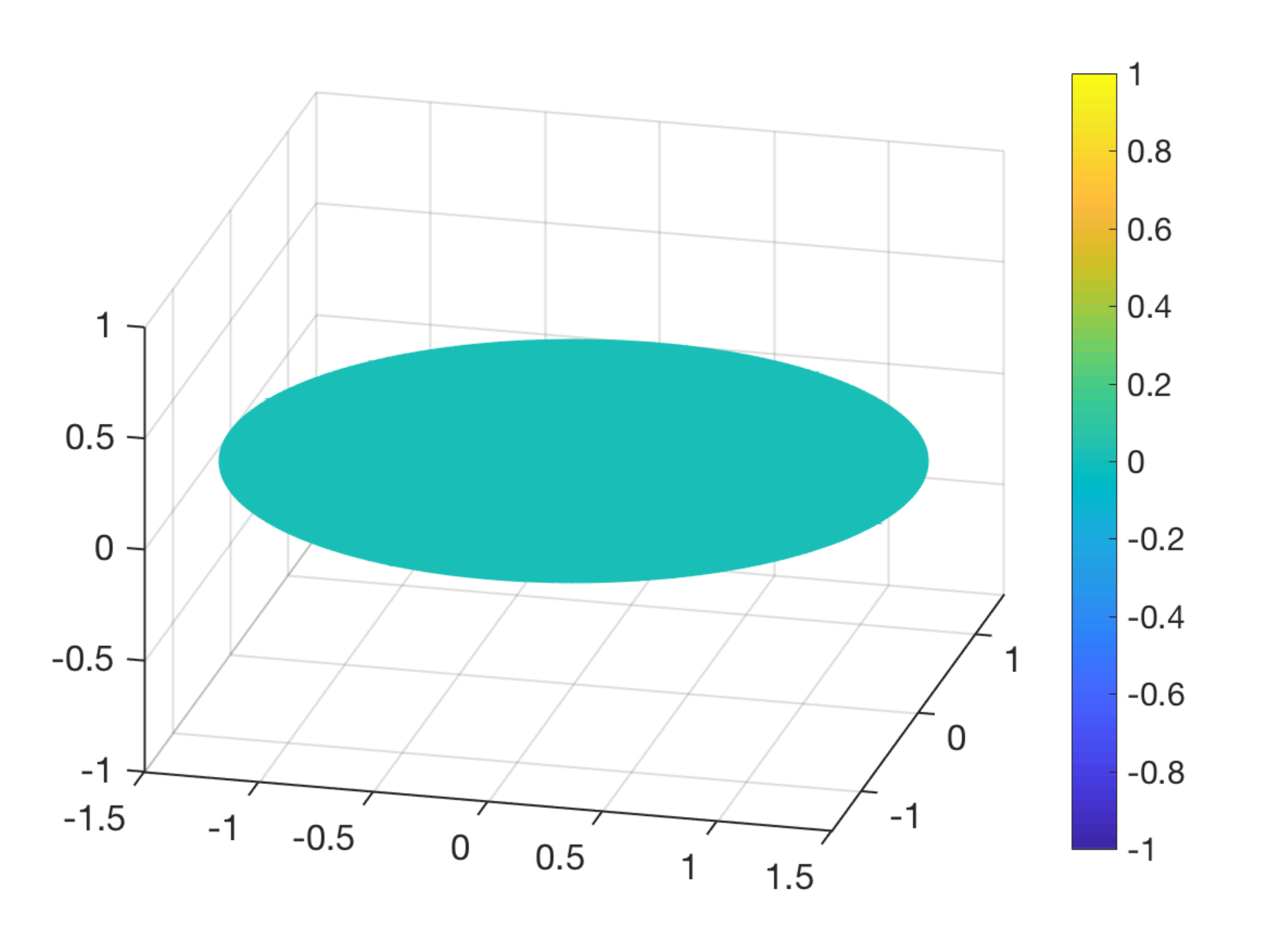}         
        \caption{\label{f:ex4_2}
        The panels show the behavior of $\bar{z}_h$ as we 
        vary the exponent $s$. Top row from left to right: $s = 0.1,0.6,0.7$. Bottom 
        row from left to right: $s = 0.8,0.9$. For smaller values of $s$, the recovery
        of $\bar{z}_h$ is quite remarkable. However, for larger values of $s$, $\bar{z}_h 
        \equiv 0$ as expected -- the behavior of $\bar{u}_h$ for large $s$ is close to 
        the classical Poisson problem which does not allow external sources.}
    \end{figure}

\subsection{Dirichlet control problem}
\label{s:dcpNum}


We next consider two Dirichlet control problems. The setup is similar to 
Subsection~\ref{s:source} except now we set $u_d \equiv 1$.

\begin{example} 
{\rm 
The computational setup for the first example is shown in Figure~\ref{f:ex1_setup}. 
Let $\Om = B_0(1/2)$ (the region insider the innermost ring) and the region 
inside the outermost ring is $\widetilde\Om = B_0(1.5)$. The annulus inside 
$\widetilde\Om \setminus \Om$ is 
$\widehat\Om$ which is the support of control. The right panel in 
Figure~\ref{f:ex1_setup} shows a finite element mesh with DoFs = 6069. 

In Figures~\ref{f:ex1s02} and \ref{f:ex1s08} we have shown the optimization results for 
$s = 0.2$ and $s = 0.8$, respectively. The top row shows the desired state $u_d$ (left) and 
the optimal state $\bar{u}_h$ (right). The bottom row shows the optimal control $\bar{z}_h$ 
(left) and the optimal adjoint variable $\bar{p}_h$ (right). We notice that in both cases
we can approximate the desired state to a high accuracy but the approximation is slightly
better for smaller $s$, especially close to the boundary. This is to be expected as for 
large values of $s$ the regularity of the adjoint variable deteriorates significantly 
(cf.~Remark~\ref{rem:ocdreg}). 

 \begin{figure}[htb] 
  \centering
  \includegraphics[width = 0.22\textwidth]{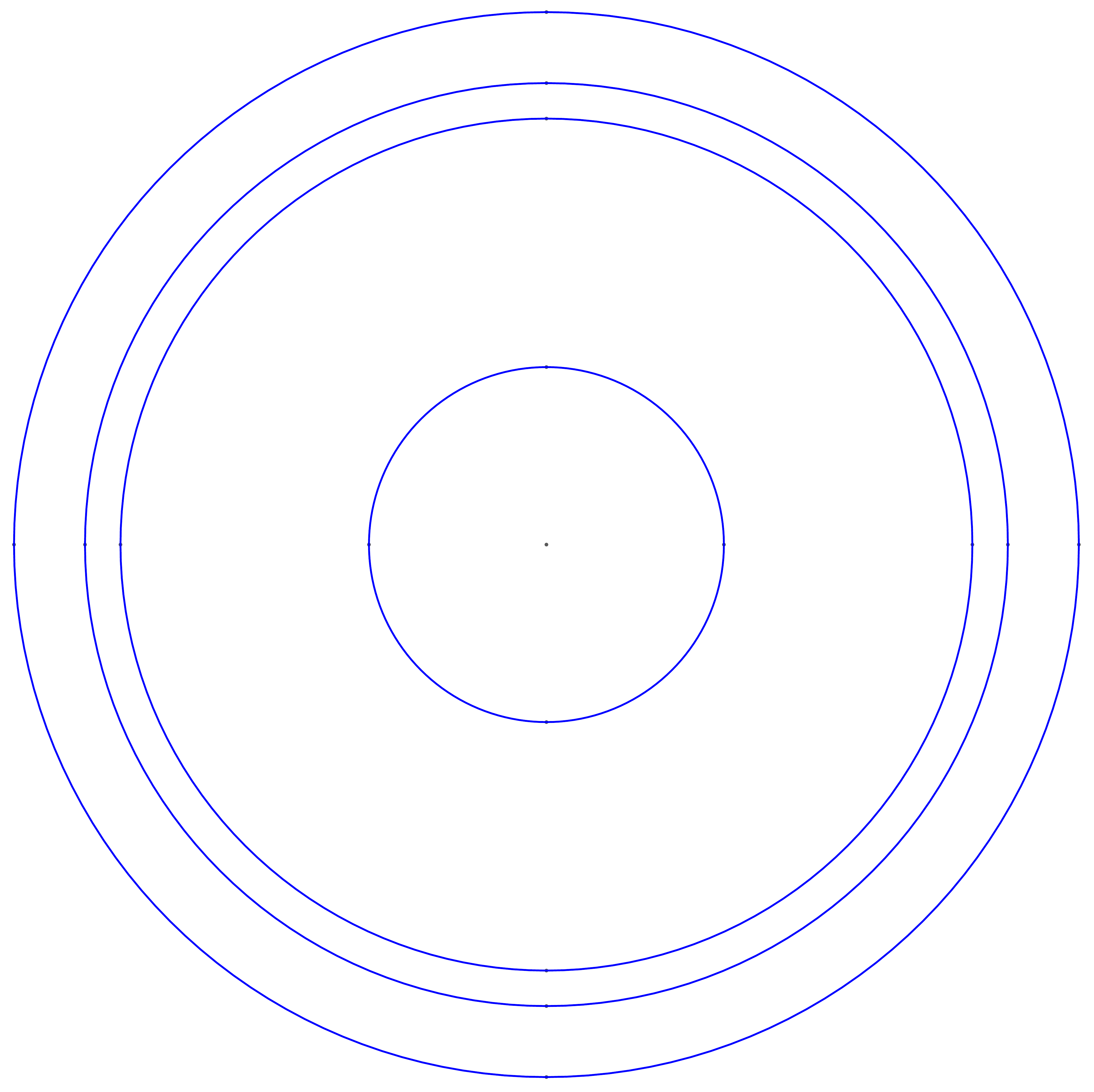}\qquad 
  \includegraphics[width = 0.24\textwidth]{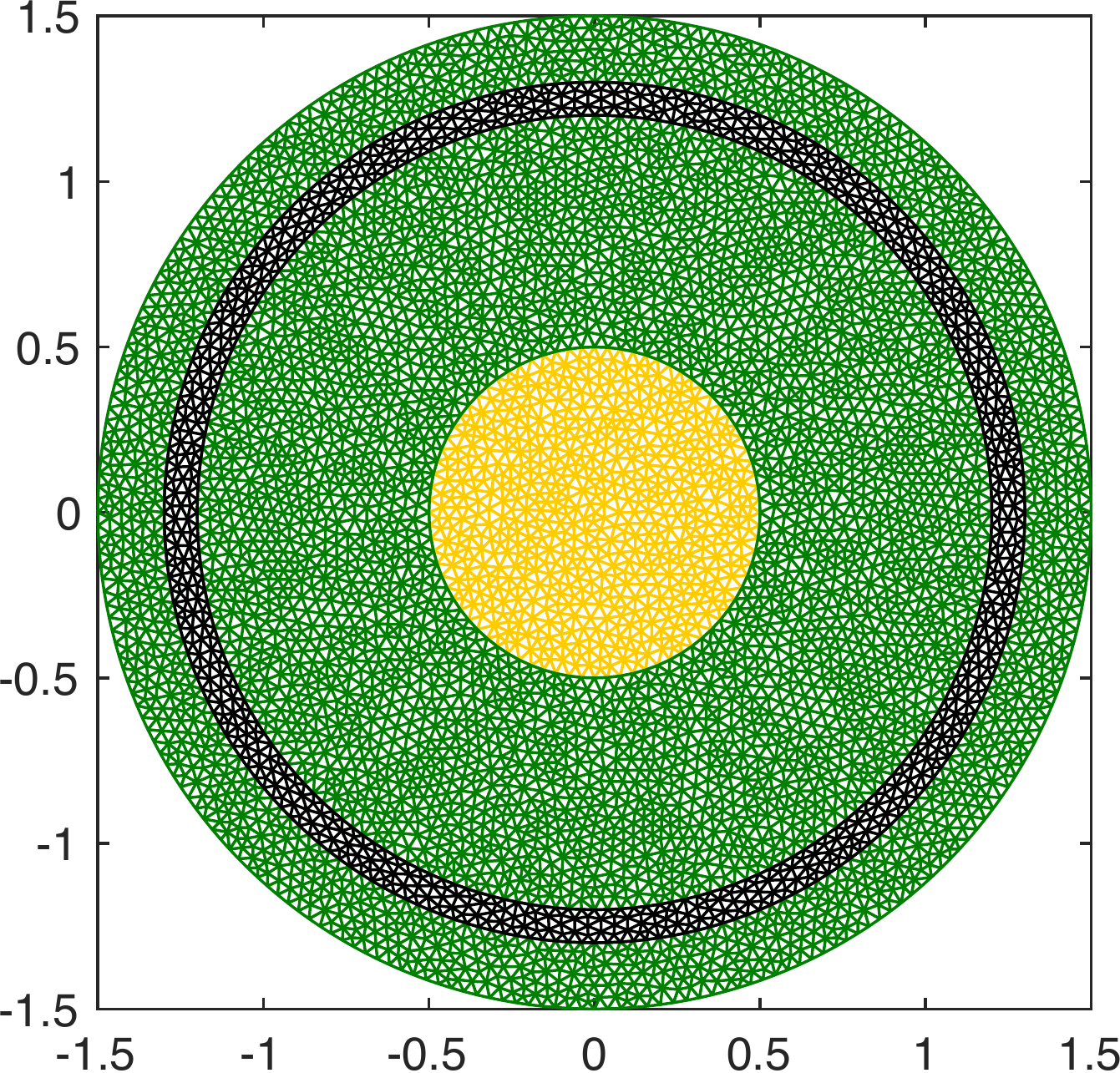}
  \caption{\label{f:ex1_setup}
  Left: computational domain where the inner circle is $\Omega$, the region inside the
  outer circle is $\widetilde\Om$, and the annulus inside $\widetilde\Om \setminus
  \Om$ is $\widehat\Om$ which is the region where the control is supported.
  Right: A finite element mesh.}
 \end{figure}
  
 \begin{figure}[htb]
  \centering
  \includegraphics[width=0.35\textwidth]{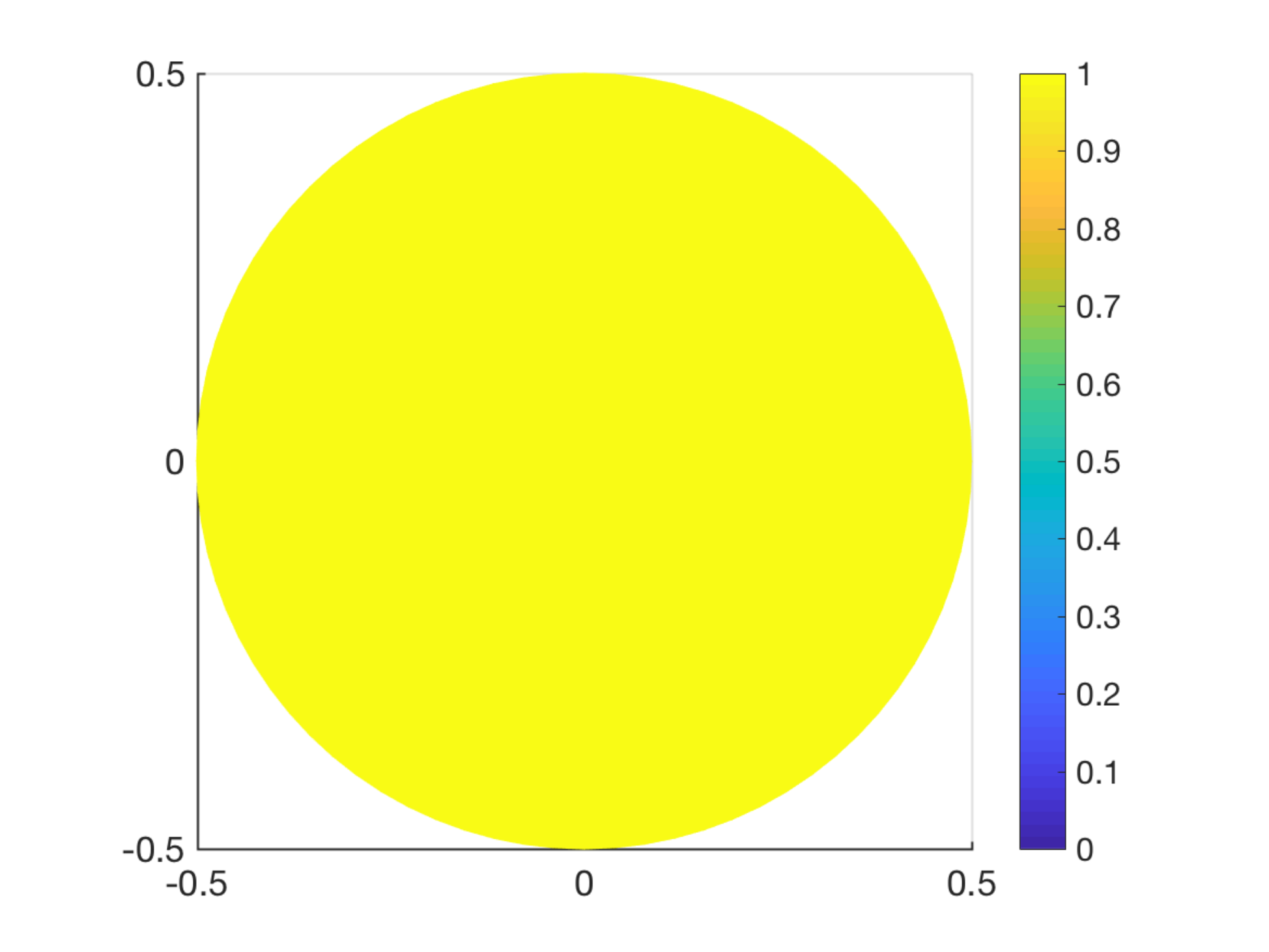}
  \includegraphics[width=0.35\textwidth]{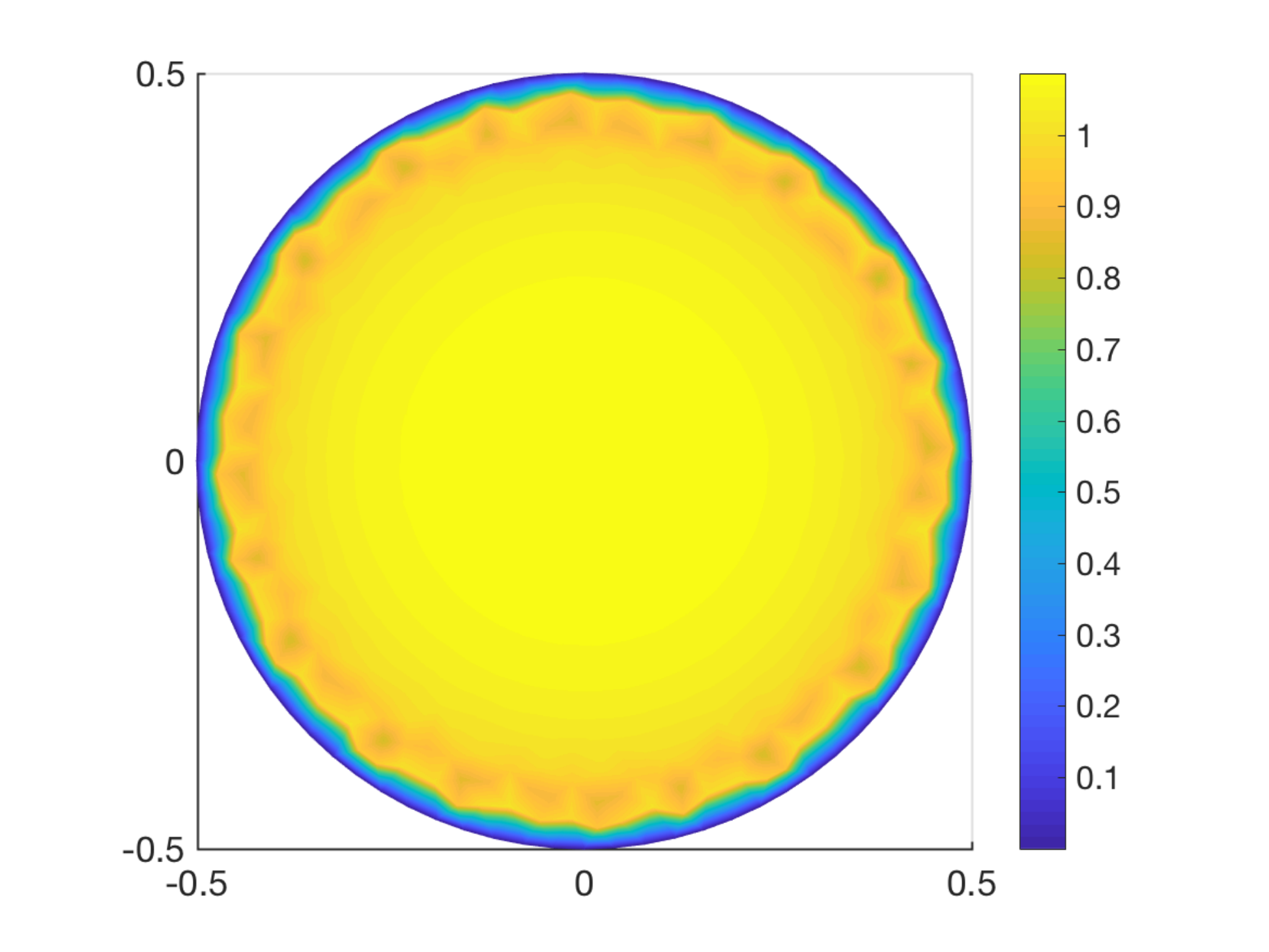}  
  \includegraphics[width=0.35\textwidth]{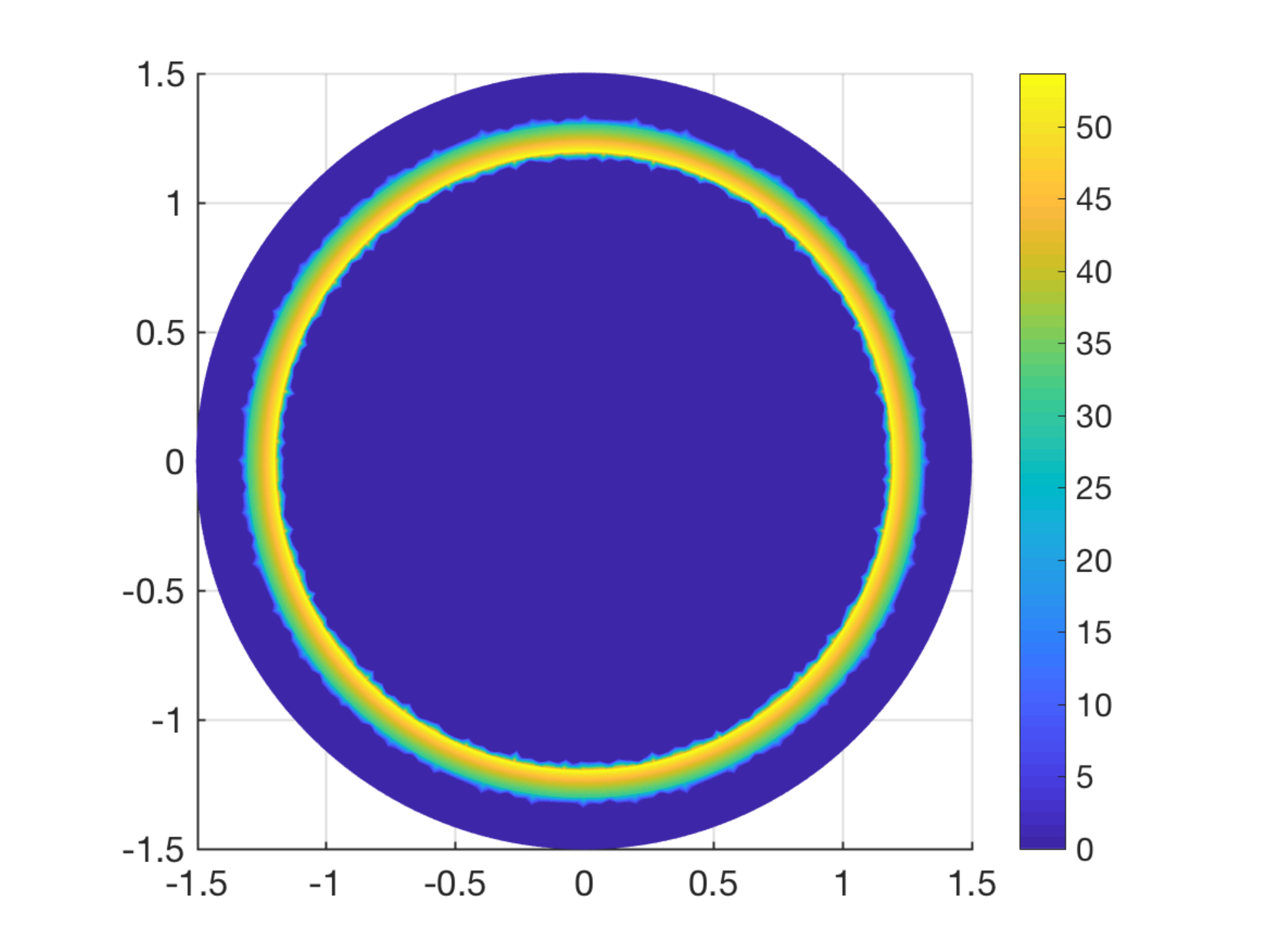}
  \includegraphics[width=0.35\textwidth]{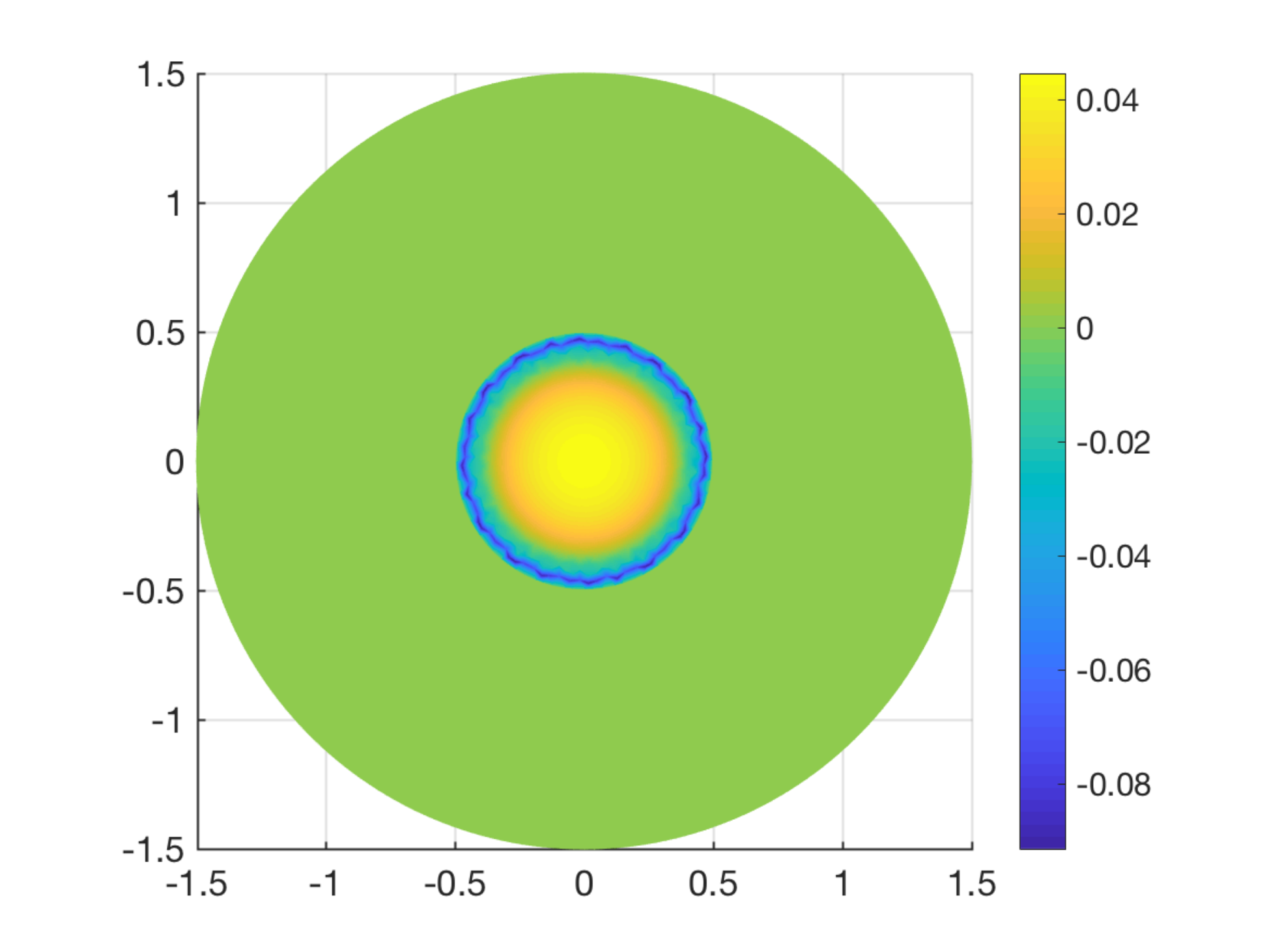}  
  \caption{\label{f:ex1s02}
  Example 1, $s = 0.2$: Top row: Left - Desired state $u_d$; Right - Optimal state 
  ${\bar u}_h$.
   Bottom row: Left - Optimal control ${\bar z}_h$, Right - Optimal adjoint ${\bar p}_h$.}
 \end{figure}
   
 \begin{figure}[htb]
  \centering
  \includegraphics[width=0.35\textwidth]{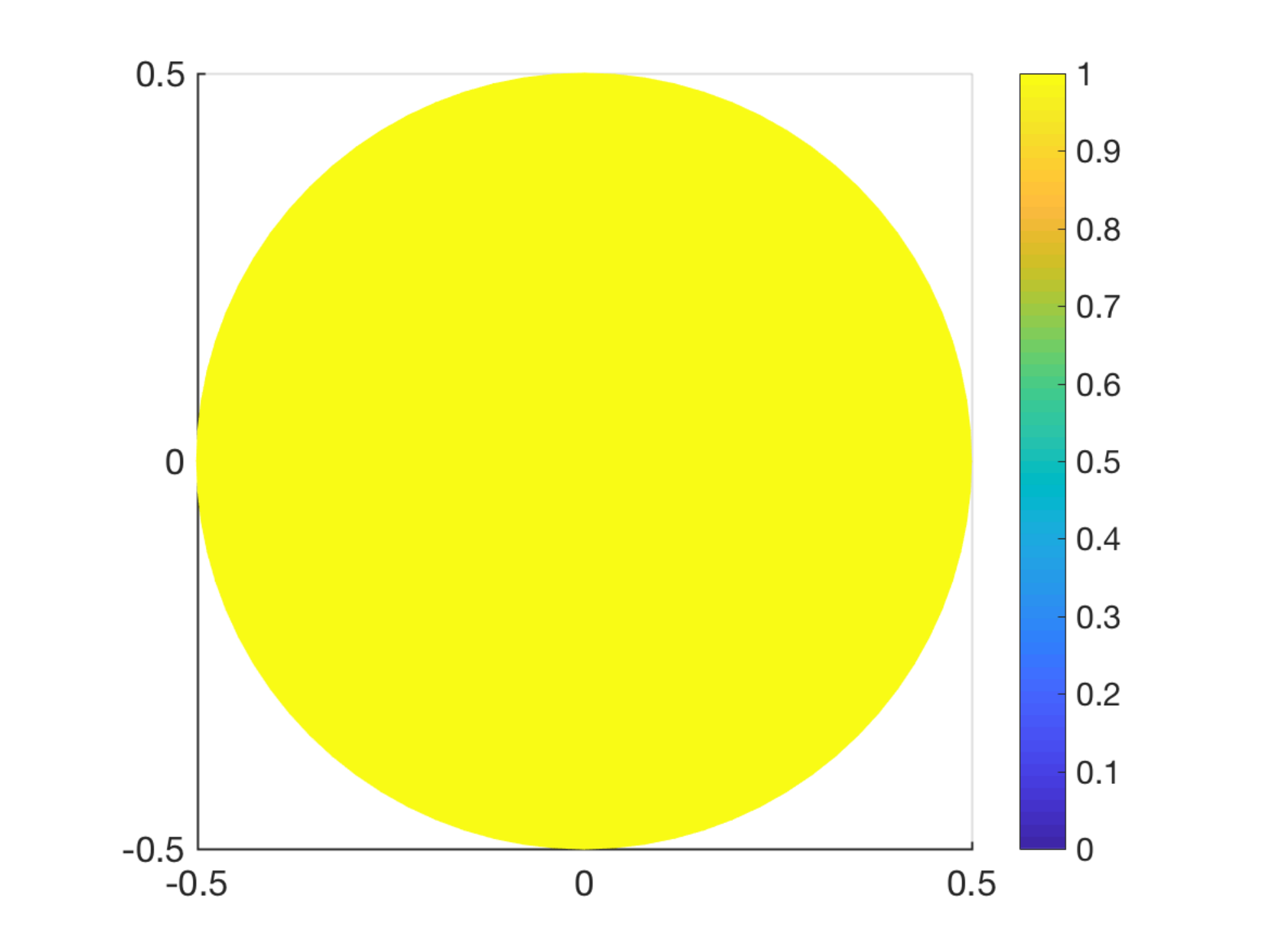}
  \includegraphics[width=0.35\textwidth]{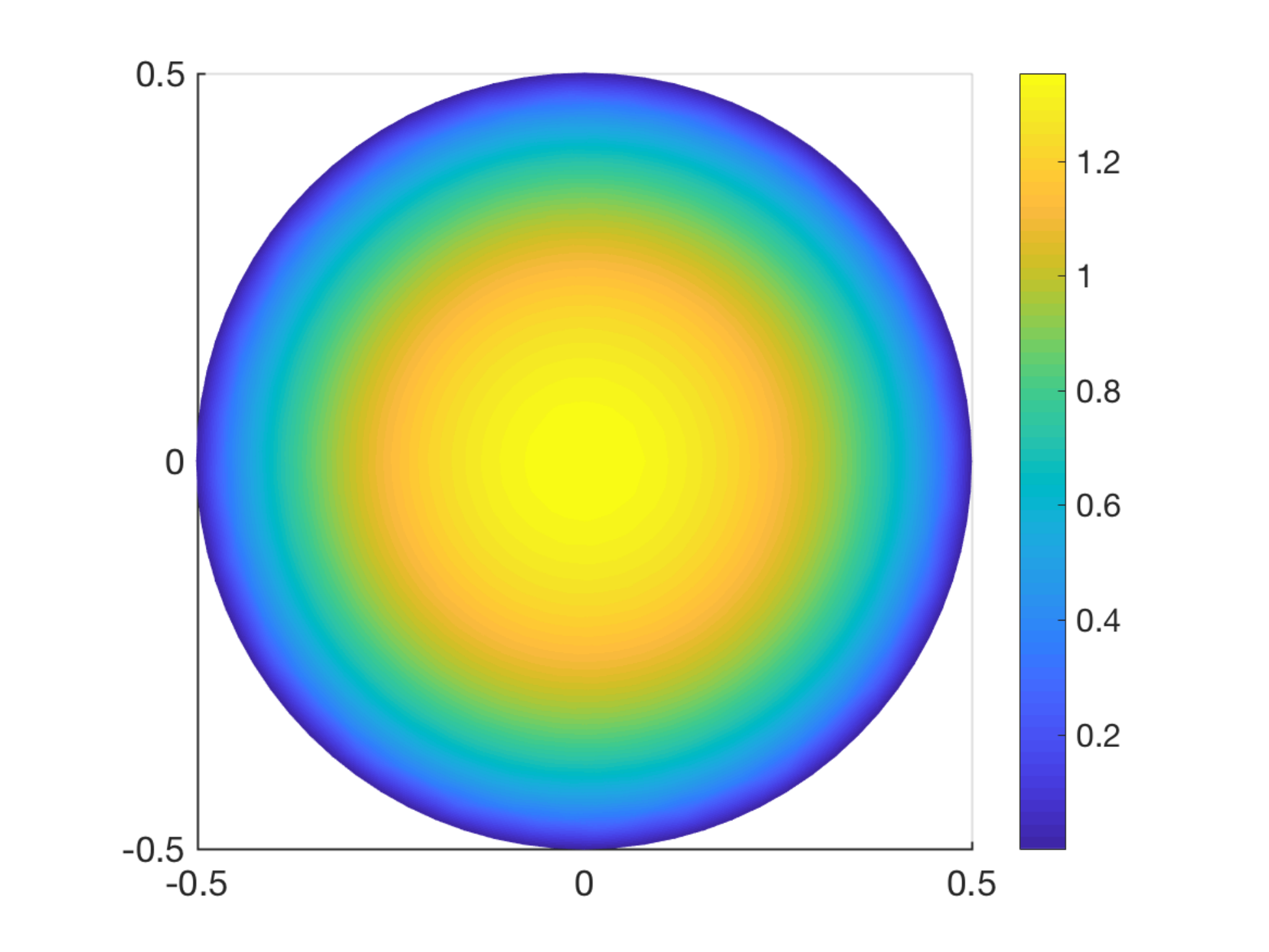}  
  \includegraphics[width=0.35\textwidth]{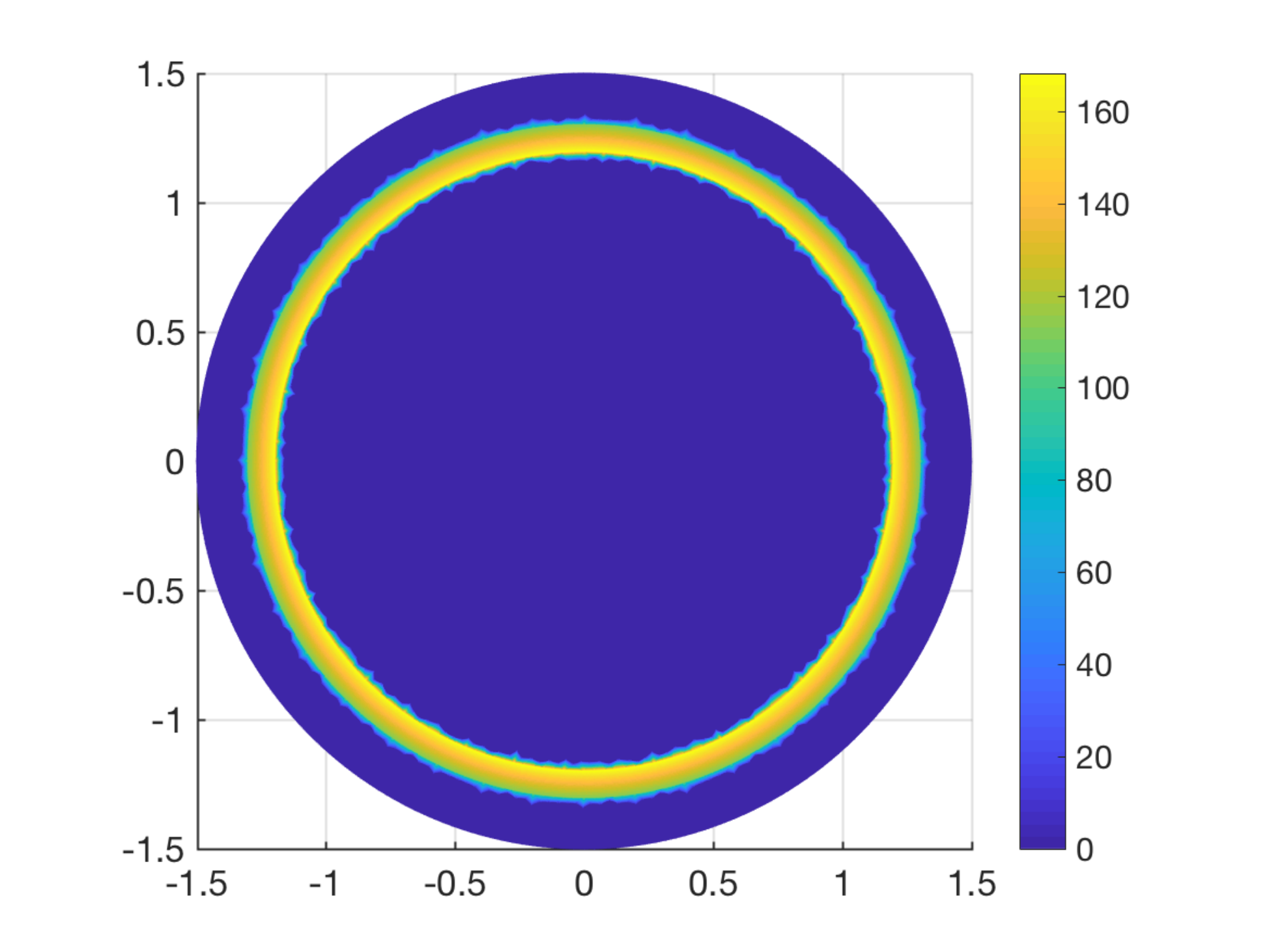}
  \includegraphics[width=0.35\textwidth]{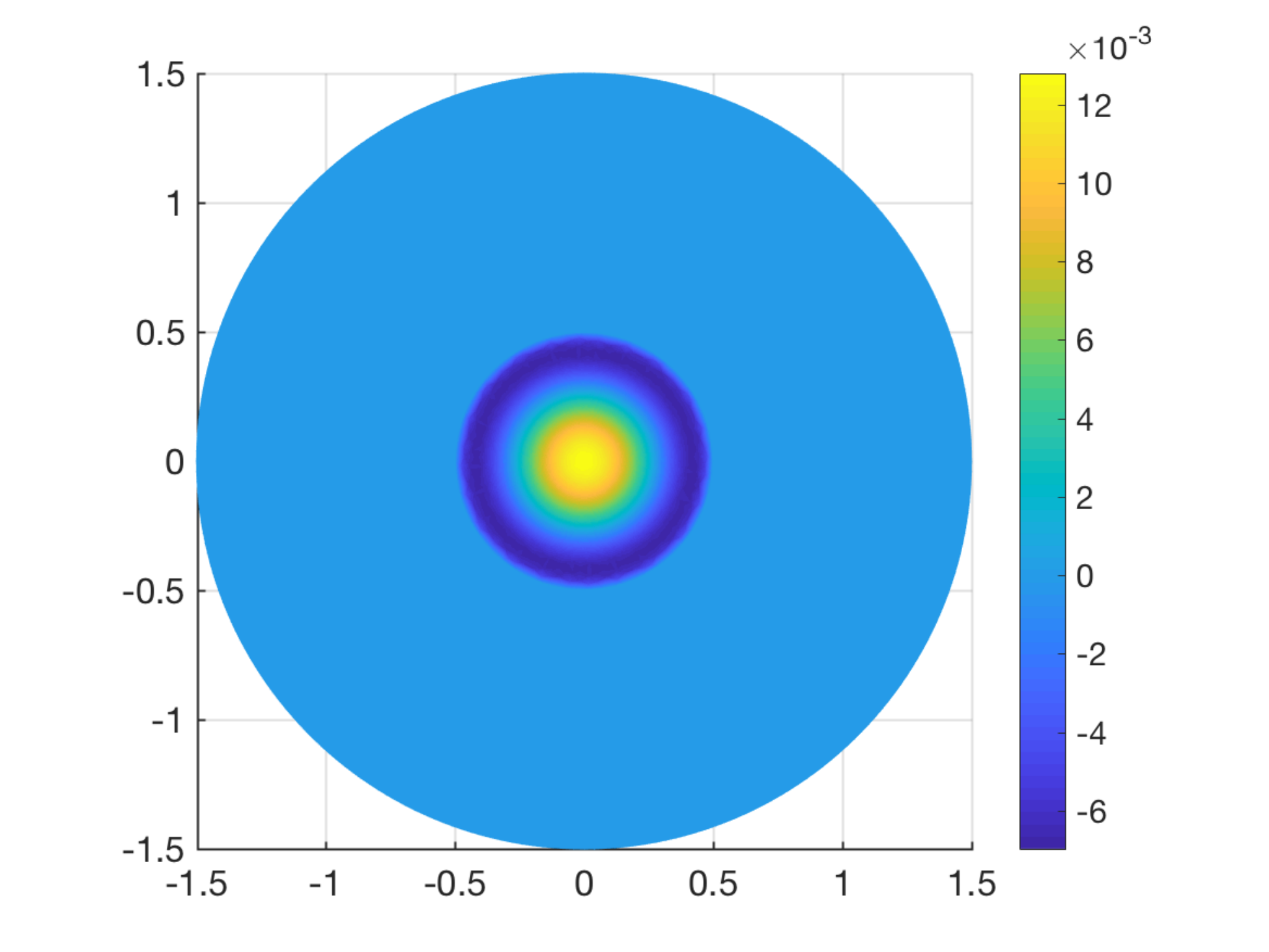}  
  \caption{\label{f:ex1s08}
  Example 1, $s = 0.8$: Top row: Left - Desired state $u_d$; Right - Optimal state 
  ${\bar u}_h$.
   Bottom row: Left - Optimal control ${\bar z}_h$, Right - Optimal adjoint ${\bar p}_h$.}
 \end{figure}
 }
\end{example}
\begin{example} 
{\rm 
The computational setup for our final example is shown in Figure~\ref{f:ex3_setup}. The
M-shape region is $\Om$ and the region inside the outermost ring is $\widetilde\Om = B_0(0.6)$. 
The smaller region inside $\widetilde\Om \setminus \Om$ is $\widehat\Om$ which is the support 
of control. The right panel in Figure~\ref{f:ex1_setup} shows a finite element mesh with 
DoFs = 4462. 

 \begin{figure}[htb] 
  \centering
  \includegraphics[width = 0.22\textwidth]{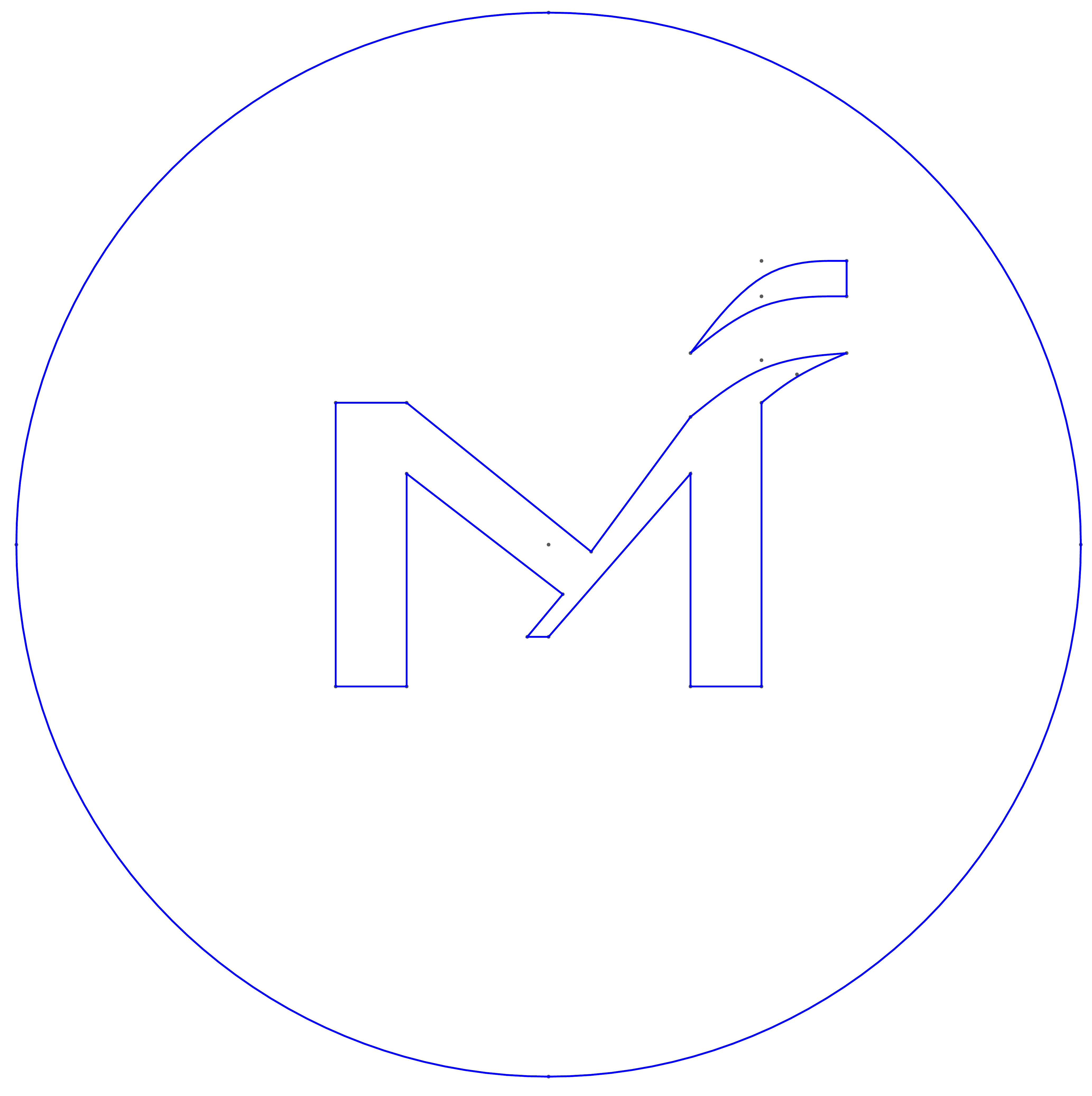}\qquad 
  \includegraphics[width = 0.24\textwidth]{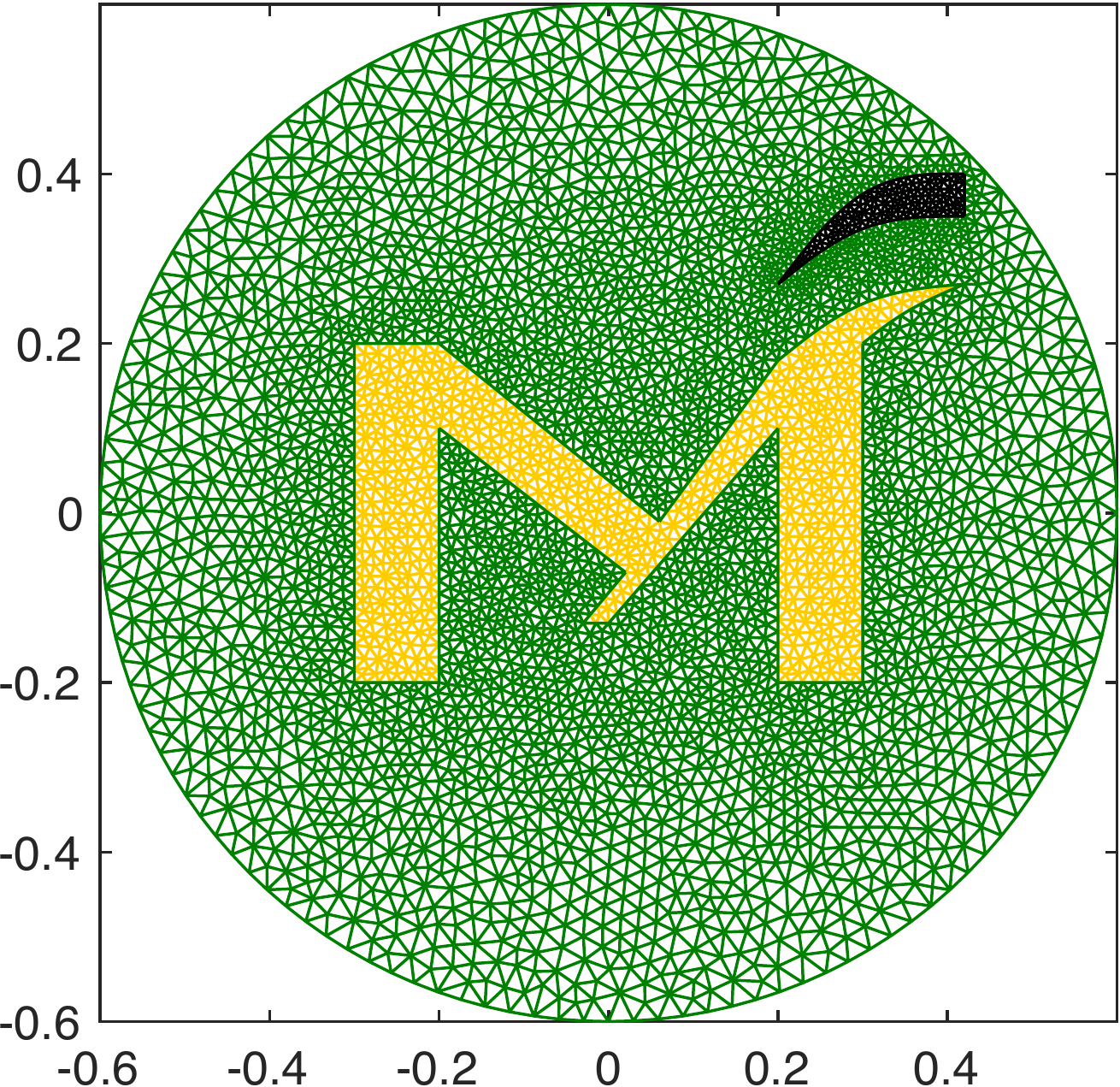}
  \caption{\label{f:ex3_setup} 
  Left: computational domain where the M-shaped region is $\Omega$, the region inside the
  outer circle is $\widetilde\Om$ and the region inside $\widetilde\Om \setminus
  \Om$ is $\widehat\Om$ which is the region where control is supported.
  Right: A finite element mesh.}
 \end{figure}

In Figure~\ref{f:ex3s08} we have shown the optimization results for 
$s = 0.8$. The top row shows desired state $u_d$ (left) and optimal state 
$\bar{u}_h$ (right). The bottom row shows the optimal control $\bar{z}_h$ 
(left) and the optimal adjoint variable $\bar{p}_h$ (right). Even though
the control is applied in an extremely small region we can still match
the desired state in certain parts of $\Om$.

 \begin{figure}[htb]
  \centering
  \includegraphics[width=0.31\textwidth]{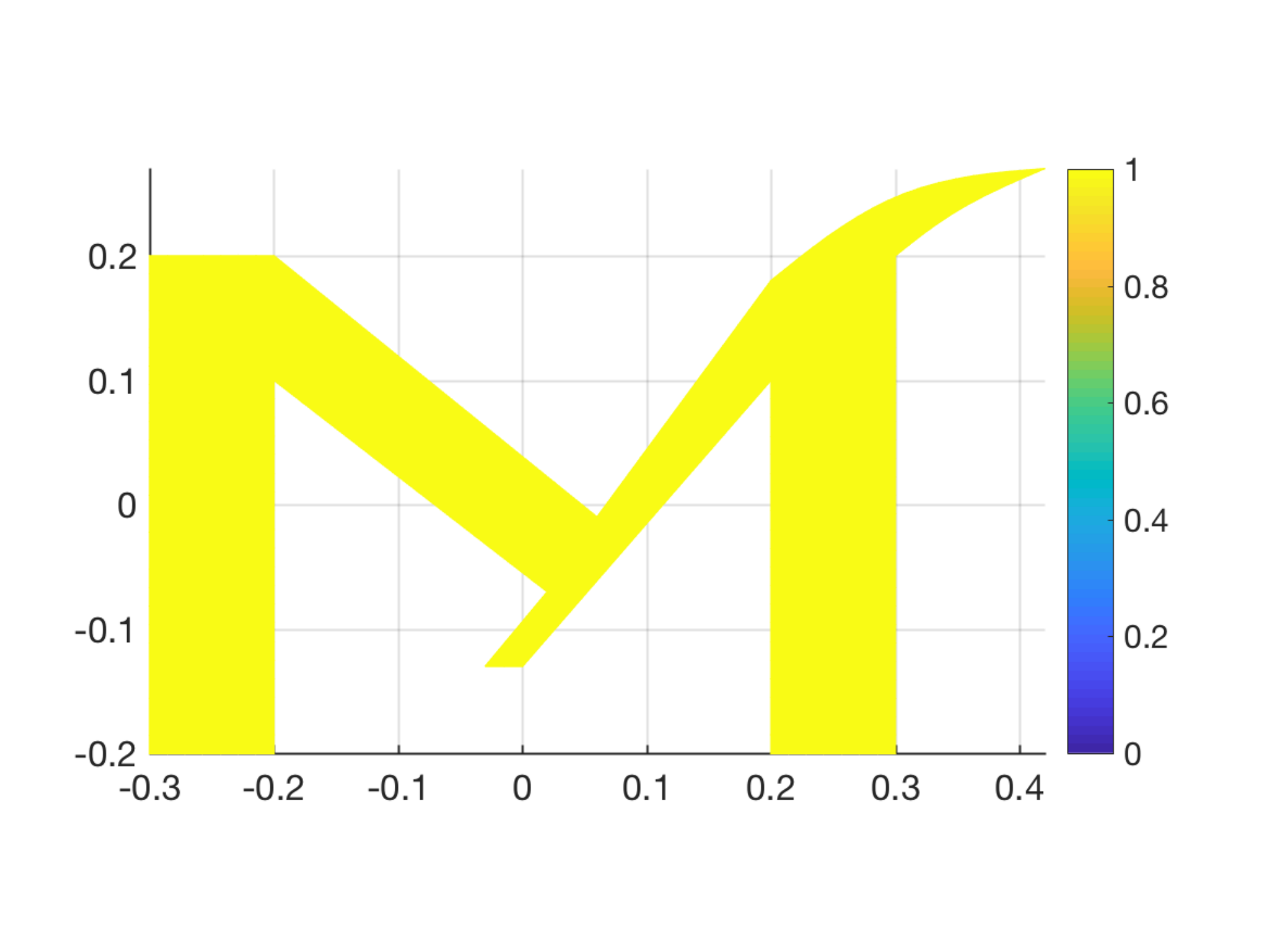} \quad
  \includegraphics[width=0.31\textwidth]{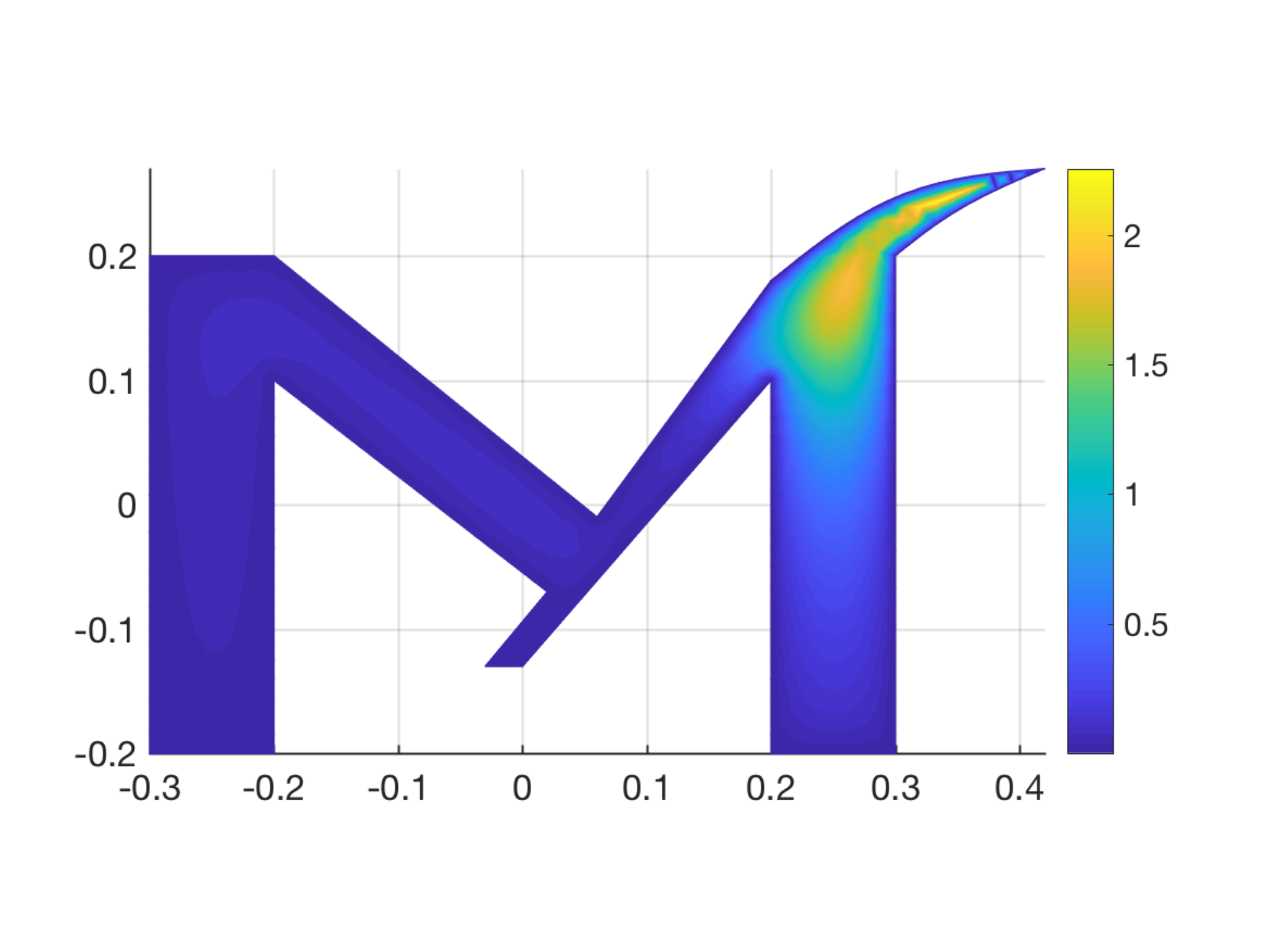}  
  \includegraphics[width=0.35\textwidth]{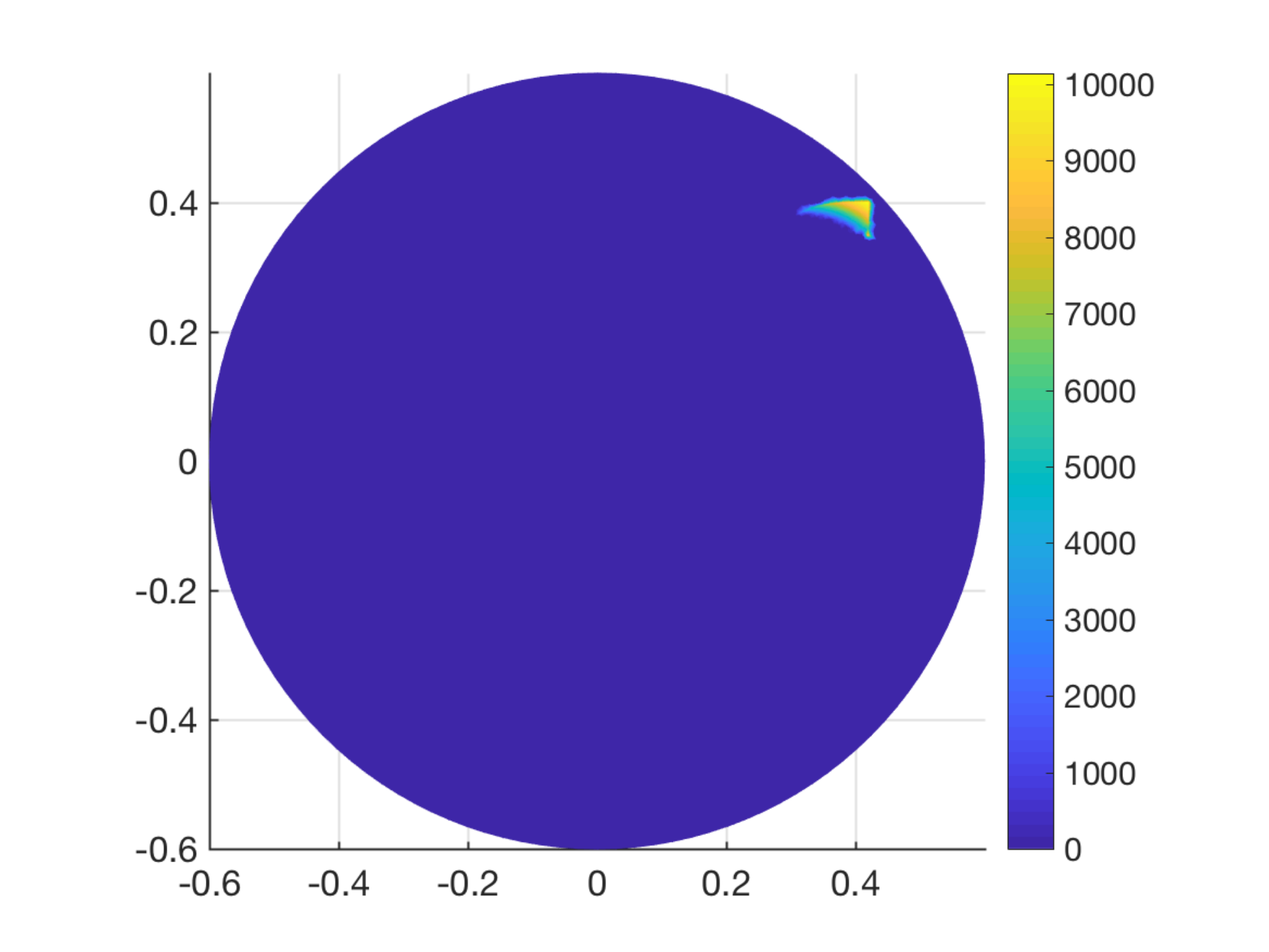}
  \includegraphics[width=0.35\textwidth]{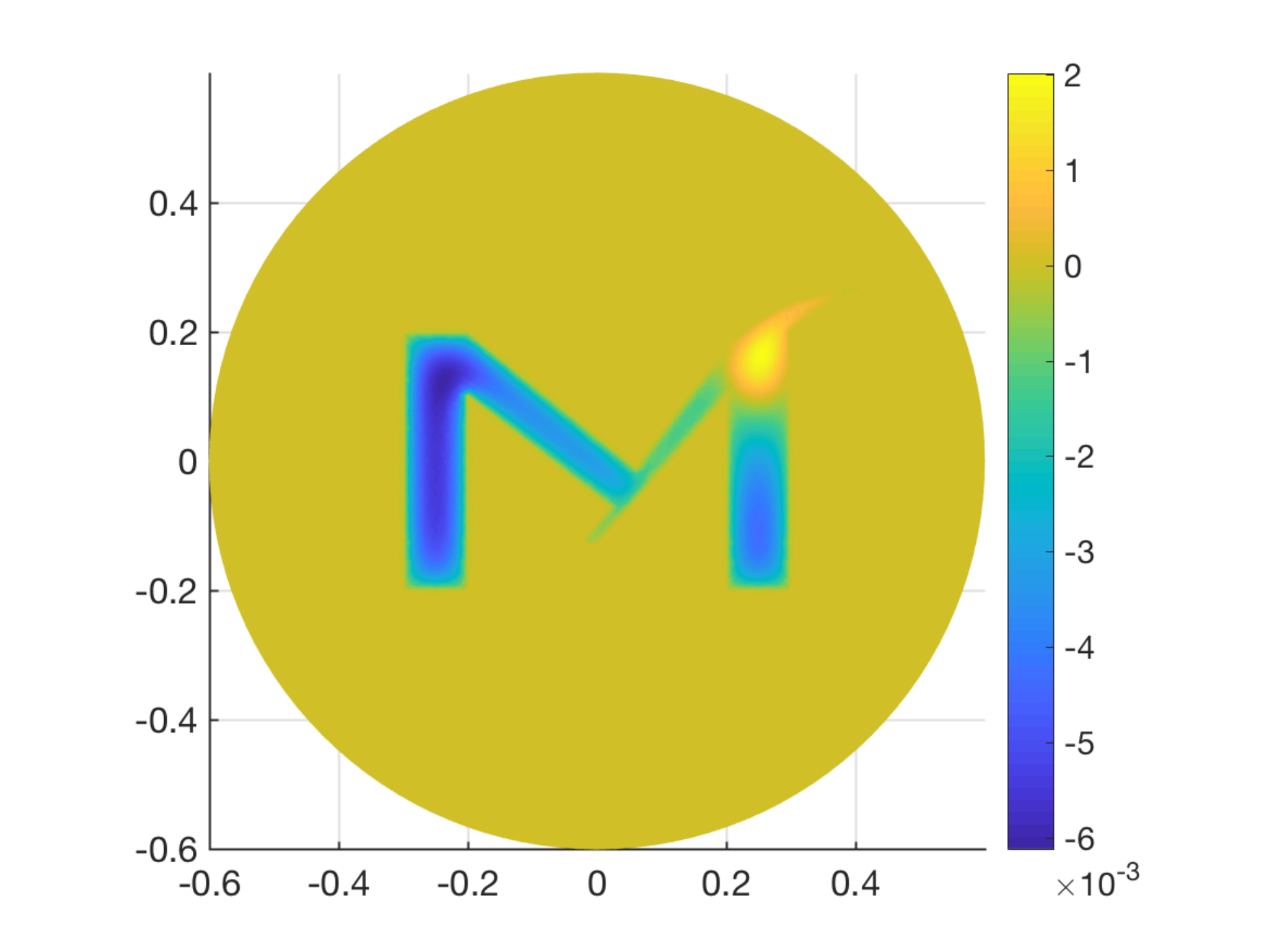}  
  \caption{\label{f:ex3s08} 
  Example 3, $s = 0.8$: Top row: Left - Desired state $u_d$; Right - Optimal state 
  ${\bar u}_h$.
   Bottom row: Left - Optimal control ${\bar z}_h$, Right - Optimal adjoint ${\bar p}_h$.}
 \end{figure}
  }
\end{example}

\vspace{0.5cm}

\noindent
{\bf Acknowledgement}:
We would like to thank Rolf Krause for suggesting to use the term ``interaction operator"
instead of ``nonlocal normal derivative". 


\bibliographystyle{plain}
\bibliography{refs}

\end{document}